\newtheorem*{conethm*}{The Cone Theorem}
\newtheorem{theorem}{Theorem}[section]
\newtheorem{corollary}[theorem]{Corollary}
\newtheorem{conjecture}[theorem]{Conjecture}
\newtheorem{question}{Question}
\newtheorem{lemma}[theorem]{Lemma}
\newtheorem{proposition}[theorem]{Proposition}
\newtheorem*{thm:locumc}{\autoref{thm:locumc}}
\theoremstyle{definition}
\newtheorem{definition}[theorem]{Definition}
\newtheorem{example}[theorem]{Example}
\theoremstyle{remark}
\newtheorem{case}{Case}
\newcommand{\bb}[1]{\mathbb{\MakeUppercase{#1}}}
\newcommand{\ca}[1]{\mathcal{\MakeUppercase{#1}}}
\newcommand{\tdeg}[1]{\bm{#1}}
\newcommand{\symdif}{\mathbin{\bigtriangleup}}
\newcommand{\nequiv}{\not\equiv}
\newcommand{\cone}{\triangledown}
\begin{document}

\title{Martin's Conjecture in the Enumeration Degrees}
\author[A. Nakid Cordero]{Antonio Nakid Cordero}
\address{Department of Mathematics, University of Wisconsin--Madison \\
  480 Lincoln Drive, Madison, Wisconsin 53706, USA}
\email{\href{mailto:nakidcordero@wisc.edu}{nakidcordero@wisc.edu}}
\urladdr{\href{https://people.math.wisc.edu/~nakidcordero/}{https://people.math.wisc.edu/~nakidcordero}}
\subjclass[2020]{03D30, 03D28}
\keywords{Martin's conjecture, enumeration degrees, determinacy, Turing degrees, computability}

\markboth{A. Nakid Cordero}{Martin's Conjecture in the Enumeration Degrees}

\maketitle

\begin{abstract}
  Martin's Conjecture states that every definable function on the Turing degrees is either constant or increasing, and that every increasing function is
  an iterate of the Turing jump. This classification has already been corroborated for the class of uniformly invariant functions and a long-standing
  conjecture by Steel is that every definable function on the Turing degrees is equivalent to a uniformly invariant one. We explore whether a similar
  classification is possible in the enumeration degrees, an extension of the Turing degrees. We show that the spectrum of behaviour is much wider in
  the enumeration degrees, even for uniformly invariant functions. However, our main result is that uniformly invariant functions behave locally as
  nicely as possible: they are constant, increasing, or above the skip operator. As a consequence, we show that there is a definable function in the
  enumeration degrees that is not equivalent to a uniformly invariant one on any cone.
\end{abstract}
        
\section{Introduction}

A core line of research in computability theory has been to understand the role of the jump operator in the structure of the Turing degrees \cite{marksTuringJumpUnique2011}. Along
this line, Martin conjectured---roughly speaking---that the centrality of the jump is a consequence of a lack of alternatives. Indeed, we can interpret
Martin's Conjecture as giving a mathematical definition of what a \emph{natural} degree-invariant construction of a non-computable set is and then claiming
that the only natural constructions are, up to Turing degree, the jump and its iterates.

The obvious obstacle in stating such a conjecture comes from the term ``natural''. Martin isolated two properties common to all known degree-invariant
constructions hoping to characterize them: definability and relativization. The former takes shape as a strong set-theoretic assumption---the Axiom of
Determinacy (AD)---while the latter is a staple of most results in computability theory.

In the Turing degrees, the strength of AD is wielded in a singular form: Turing Determinacy (TD) is the statement ``every set
$\ca{A}\subseteq \ca{P}(\bb{N})$ closed under Turing equivalence either contains a cone of Turing degrees or is disjoint from a cone''. With a slight rephrasing, TD states
that if a property occurs cofinally in the Turing degrees, then there is a cone of degrees with the property; i.e., there is a degree $\tdeg{d}$ such that
every degree above $\tdeg{d}$ has the property. Martin's Cone Theorem \cite{martinTD1968} is that, over $ZF$, $AD$ implies $TD$; whether the converse relation holds is
unknown. Since the Cone Theorem will be a central element of our discussion, we display it for future reference:

\begin{theorem}[Cone Theorem (Martin \cite{martinTD1968})]\label{conethm}
  Assume $AD$. Let $\ca{A}\subseteq \ca{P}(\bb{N})$ be closed under Turing equivalence. Then, either $\ca{A}$ or its complement $\overline{\ca{A}}$ contains a cone of Turing degrees.
\end{theorem}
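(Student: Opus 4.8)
The plan is to run Martin's original argument: associate to $\mathcal{A}$ a two-player game of perfect information whose determinacy, guaranteed by $AD$, forces a cone of Turing degrees into $\mathcal{A}$ or into $\overline{\mathcal{A}}$ according to which player wins. Identifying $\mathcal{P}(\mathbb{N})$ with Cantor space $2^{\mathbb{N}}$ via characteristic functions, consider the game $G_{\mathcal{A}}$ in which players I and II alternately play bits $z(0),z(1),z(2),\dots\in\{0,1\}$, with I supplying the even-indexed bits and II the odd-indexed ones, and declare I the winner of a run $z$ exactly when $z\in\mathcal{A}$. Since $AD$ yields determinacy for games on $\{0,1\}$ (reducing, if one prefers, to games on $\mathbb{N}$), $G_{\mathcal{A}}$ is determined.

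Suppose first that I has a winning strategy $\sigma$. I claim the cone $\{\tdeg{x}:\tdeg{x}\geq_T\tdeg{\sigma}\}$ above the degree of $\sigma$ is contained in $\mathcal{A}$. Indeed, fix any set $X$ with $X\geq_T\sigma$, and let II \emph{copy} $X$, playing the $n$-th bit of $X$ on II's $n$-th move; let $z$ be the resulting run. Then $X\leq_T z$, since the bits of $X$ form a computable subsequence of $z$, and conversely $z\leq_T\sigma\oplus X\equiv_T X$, because $z$ can be produced bit by bit, reading $X$ on II's turns and applying $\sigma$ to the prior partial run on I's turns, with $\sigma\leq_T X$ by hypothesis. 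Hence $z\equiv_T X$; as $\sigma$ is winning for I we get $z\in\mathcal{A}$, and closure of $\mathcal{A}$ under Turing equivalence gives $X\in\mathcal{A}$. The symmetric argument, with the roles of I and II exchanged and a winning strategy $\tau$ for II, shows that if II wins $G_{\mathcal{A}}$ then the cone above the degree of $\tau$ lies in $\overline{\mathcal{A}}$. Either way one of $\mathcal{A}$, $\overline{\mathcal{A}}$ contains a cone.

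The only delicate point is the Turing bookkeeping behind $z\equiv_T X$: the inequality $z\leq_T X$ genuinely uses that $X$ computes the winning strategy, which is precisely why the conclusion is a statement about a cone with base the degree of that strategy rather than about all degrees. The remaining ingredients — the identification of $\mathcal{P}(\mathbb{N})$ with Cantor space, the passage between $\{0,1\}$-games and $\mathbb{N}$-games, and the hypothesis that $\mathcal{A}$ is closed under $\equiv_T$ — are routine.
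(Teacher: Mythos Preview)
Your proof is correct and is precisely Martin's classical argument. Note, however, that the paper does not supply its own proof of this theorem: it is stated as a cited result from Martin~\cite{martinTD1968} and used as background, so there is no in-paper proof to compare against. Your write-up reproduces the standard game-theoretic proof faithfully.
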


This theorem gives us a way to compare the asymptotic behaviour of functions from $\ca{P}(\bb{N})$ to $\ca{P}(\bb{N})$ relative to Turing reducibility. Namely, we say
that \emph{$f$ is Turing-below $g$ on a cone}, denoted by $f\leq_T^{\cone}g$, if there is some $A\subseteq\bb{N}$ such that $f(X)\leq_{T} g(X)$ for all
$X\geq_TA$. Similarly, $f$ is \emph{increasing on a cone} if $\operatorname{Id}\leq^{\cone}_T f$ and \emph{constant on a cone} if $f(X)\equiv_Tf(Y)$, for all $X$ and $Y$ in some cone.
    
A common theme in computability theory is that most results \emph{relativize}. This means that a theorem about computable functions will usually remain true
when an oracle is added to the statement. Moreover, the proof of the relativized statement will be the same as the original, except for the inclusion
of the oracle. The simplest example is the theorem that the halting set $K=\{\langle x,e\rangle \mid \phi_e(x)\downarrow\}$ is not computable. We can relativize the definition of the
halting set to any oracle $A$ to obtain a function $A\mapsto K^A$. Then, the proof that $K$ is not computable can be relativized to show that for all
$A$, $K^{A}\nleq_{T} A$. Of course, figuring out what the correct relativized statement is requires some care.

As with the jump, we think of a construction that relativizes as a function that maps a set $A$ to the result of the construction $C^{A}$ relative to
the oracle $A$. To ensure that the function $A\mapsto C^A$ induces a map in the Turing degrees, we only consider functions on the power set of
$\bb{N}$ with the property that Turing equivalent sets are sent to Turing equivalent sets. We call this property \emph{$T$-invariance}. Martin's Conjecture is
that, under $AD$, the only nontrivial $T$-invariant functions are the iterates of the Turing jump. Precisely,

\begin{conjecture}[Martin]
  Assume $AD$. Then
  \begin{enumerate}[label={(MC \arabic*)},labelsep=10pt, labelindent=0.5\parindent,itemindent=0pt,leftmargin=*,font=\upshape]
    \item\label{mc1} If $f\colon\ca{P}(\bb{N})\to \ca{P}(\bb{N})$ is $T$-invariant then $f$ is either increasing on a cone or constant on a cone.
    \item\label{mc2} The increasing $T$-invariant functions in $\ca{P}(\bb{N})$ are well-ordered up to Turing equivalence on a cone. Moreover, the successor in this
    well-order is given by the Turing jump.
  \end{enumerate}
\end{conjecture}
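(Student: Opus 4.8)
The plan is to treat the two halves of the conjecture separately; in each, the global statement reduces to one or two applications of \autoref{conethm} together with a combinatorial core that, at present, is only known to go through under a uniformity hypothesis on $f$.

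For the dichotomy, the first step is to apply the Cone Theorem to $\mathcal{A}=\{X\in\mathcal{P}(\mathbb{N})\mid f(X)\leq_T X\}$, which is closed under Turing equivalence because $f$ is $T$-invariant. In the regressive case, $f(X)\leq_T X$ on a cone, the task is to upgrade ``regressive'' to ``constant on a cone''. I would run a pressing-down argument in the spirit of Slaman--Steel: failure of constancy on a cone means that above every oracle there are two sets with distinct $f$-values, and one feeds this into a Martin-style diagonalization (using the Recursion Theorem to fix an index for $f$) to build a single set whose $f$-value is not well-defined up to Turing equivalence, contradicting $T$-invariance. In the complementary case, $f(X)\nleq_T X$ on a cone, the target is $\operatorname{Id}\leq^{\cone}_T f$, i.e.\ $X\leq_T f(X)$ on a cone; here the Cone Theorem is applied a second time to $\mathcal{B}=\{X\mid X\leq_T f(X)\}$, and what remains is to exclude the ``incomparable'' alternative $f(X)\mid_T X$ on a cone. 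The incomparability-elimination step --- showing that a $T$-invariant function cannot have output Turing-incomparable to its input on a cone --- is the main obstacle: it is exactly the open part of the conjecture, and for uniformly invariant $f$ it is supplied by the Slaman--Steel uniformity lemmas, where a uniform procedure taking $X$ to $f(X)$ lets one transfer information across the cone in a way that forces comparability.

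For the well-ordering statement, I would first show that $\leq^{\cone}_T$ linearly preorders the increasing $T$-invariant functions: given increasing $f,g$, apply \autoref{conethm} to $\{X\mid f(X)\leq_T g(X)\}$; one cone gives $f\leq^{\cone}_T g$ outright, and on the other cone $f(X)\nleq_T g(X)$ must be promoted to $g\leq^{\cone}_T f$ by the same incomparability-elimination, now carried out relative to the join $f(X)\oplus g(X)$. Next, well-foundedness: from a hypothetical $\leq^{\cone}_T$-descending chain $(f_n)_{n\in\mathbb{N}}$ one assembles a single definable $T$-invariant function lying strictly $\leq^{\cone}_T$-below every $f_n$ yet non-constant on a cone, which contradicts the dichotomy; the delicate point is packaging the $f_n$ into one object while simultaneously controlling all the cones. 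Finally, to identify the jump as the successor one checks $f\leq^{\cone}_T f'$ strictly (with $f'(X)=f(X)'$), and then shows nothing lies strictly between: if $f<^{\cone}_T g<^{\cone}_T f'$ then $f(X)<_T g(X)<_T f(X)'$ on a cone, and a uniform jump-inversion argument (Posner--Robinson relative to $f(X)$) forces $g(X)$ onto a further cone where it is Turing equivalent to $f(X)$ or to $f(X)'$, a contradiction. I expect the incomparability-elimination step and the ``nothing strictly between $f$ and $f'$'' step to be the two genuine obstacles; both are currently known only for uniformly invariant functions, and Steel's conjecture --- that every definable function on the Turing degrees is equivalent to a uniformly invariant one --- is precisely what would let one deduce the full conjecture from the uniform case, which is the backdrop against which this paper turns to the analogous questions in the enumeration degrees.
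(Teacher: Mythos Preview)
The statement you were asked to prove is Martin's \emph{Conjecture}: it is displayed in the paper as a \texttt{conjecture} environment, and the paper contains no proof of it whatsoever. Both parts remain open, as the paper explicitly says (``While both parts of the conjecture remain open\ldots''), and the paper only cites the Steel and Slaman--Steel results establishing the uniform case (Theorem~\ref{thm:ssumc}). There is therefore no ``paper's own proof'' to compare your proposal against.

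To your credit, your write-up is not really a proof attempt but an honest outline of the standard reduction strategy, and you correctly flag the genuine gaps: the incomparability-elimination step in (MC~1) and the ``nothing strictly between $f$ and $f'$'' step in (MC~2) are exactly the places where the argument is only known to go through under uniformity. That is an accurate diagnosis of the state of the art, and your closing remark about Steel's conjecture bridging the gap is also correct and matches the paper's framing. One small inaccuracy: in the regressive case the Slaman--Steel result that regressive implies constant on a cone does \emph{not} require uniformity---it is a full $ZF+AD$ theorem---so that part of (MC~1) is actually settled; the open content of (MC~1) lies entirely in ruling out the incomparable case. But none of this changes the bottom line: you have sketched a plausible architecture and identified the obstructions, not given a proof, and neither has anyone else.
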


While both parts of the conjecture remain open, plenty of partial results have been obtained for particular classes of functions. Among these, the
class of \emph{uniformly invariant} functions stands out as the only one where the full conjectured has been solved. Here, a $T$-invariant function
$f$ is uniformly invariant if it is accompanied by a \emph{uniformity function} $u$ that, given a pair of indices $\langle j,i\rangle $ of computable functionals that
witness $A \equiv_T B$, outputs a pair of indices $u(\langle j,i\rangle)$ that witness the equivalence $f(A)\equiv_T f(B)$.

\begin{theorem}[Steel \cite{steelClassificationJumpOperators1982}, Slaman \& Steel \cite{slamanDefinableFunctionsDegrees1988}]\label{thm:ssumc}
  Assume $AD$. Both parts of Martin's Conjecture hold for uniformly $T$-invariant functions.
\end{theorem}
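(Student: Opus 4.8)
The plan is to prove the two parts of Martin's Conjecture for uniformly $T$-invariant functions separately, following Slaman--Steel for \ref{mc1} and Steel for \ref{mc2}. In both halves the Cone Theorem --- and hence $AD$ --- is used to turn any $T$-invariant dichotomy into a statement holding on a cone, and one works above a real coding the uniformity function $u$, so that a pair of indices witnessing $A\equiv_T B$ can always be converted, via $u$, into a pair witnessing $f(A)\equiv_T f(B)$.

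For \ref{mc1}, apply the Cone Theorem to the $T$-invariant set $\{A : f(A)\geq_T A\}$. If it contains a cone then $f$ is increasing on a cone and this part is done; otherwise $\{A : f(A)\not\geq_T A\}$ contains a cone, and the goal becomes to show $f$ is constant on a cone. The tool here is the Posner--Robinson theorem in relativized form: for any $A$ and any $Z$ with $A\not\leq_T Z$ there is $G\geq_T Z$ with $A\oplus G\equiv_T G'$, and the two reductions can be produced uniformly from the data. Given $A$ on the cone, I would use Posner--Robinson to code $A$ ``generically'' into a jump, push the resulting uniform reductions through $u$ to relate the corresponding $f$-values, and conclude that $f(A)$ is, up to Turing equivalence, independent of $A$ along a cone --- hence constant on a cone. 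The delicate point is setting up the coding so that $u$ genuinely returns the desired reductions between $f$-values, which requires careful bookkeeping of which oracle each reduction is performed relative to.

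For \ref{mc2}, one restricts attention to increasing uniformly $T$-invariant functions and works with \emph{uniformly pointed perfect trees}: perfect $T\subseteq 2^{<\omega}$ such that a single functional computes $T$ from every branch of $T$. Martin's lemma --- that any $T$-invariant set containing the branches of a pointed perfect tree contains a cone --- makes ``holds on $[S]$ for a uniformly pointed perfect $S$'' interchangeable with ``holds on a cone''. The core step is a fusion construction: given such an $f$ and a uniformly pointed perfect tree $T$, build a uniformly pointed perfect subtree $S\subseteq T$ on which $f$ is pinned down --- either $f(X)$ is uniformly computable from $X$ for $X\in[S]$, whence $f(X)\equiv_T X$ on $[S]$ (using that $f$ is increasing), so $f\equiv_T^{\cone}\operatorname{Id}$; or $f(X)\geq_T X'$ on $[S]$. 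Relativizing this construction above an arbitrary increasing uniformly $T$-invariant $g$ with $g\leq_T^{\cone} f$ yields the gap dichotomy ``$f\equiv_T^{\cone} g$ or $f(X)\geq_T (g(X))'$ on a cone'', which says that nothing lies strictly between $g$ and the jump applied to $g$. A parallel pointed-tree argument gives $\leq_T^{\cone}$-comparability of any two increasing uniformly $T$-invariant functions; comparability together with the gap dichotomy then feeds a well-foundedness argument (with a separate treatment of limit stages via suitable joins) showing that this class is well-ordered under $\leq_T^{\cone}$ with successor given by the jump, which is \ref{mc2}.

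I expect the main obstacle to be the fusion construction in \ref{mc2}: producing the uniformly pointed perfect subtree on which $f$ is either uniformly computable from the branch or jumps strictly above it, while retaining enough uniformity that reduction witnesses can be recovered from branches, is precisely the step that uses uniform invariance in an essential way. The Posner--Robinson bookkeeping in \ref{mc1} is a secondary difficulty.
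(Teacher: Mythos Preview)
The paper does not contain a proof of this theorem: it is stated as a background result with citations to Steel \cite{steelClassificationJumpOperators1982} and Slaman \& Steel \cite{slamanDefinableFunctionsDegrees1988}, and no argument is given. So there is no ``paper's own proof'' to compare your proposal against.

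That said, your outline is a reasonable high-level sketch of the original arguments in the cited sources. A few remarks on accuracy: for \ref{mc1}, the Slaman--Steel argument for the non-increasing case is more subtle than a single application of Posner--Robinson; one must handle the possibility that $f$ is order-preserving (in which case a different analysis applies) versus not order-preserving, and the coding you allude to requires more than ``bookkeeping'' --- it is genuinely the heart of the proof. For \ref{mc2}, your description of the pointed-perfect-tree/fusion machinery is broadly correct, and you are right that comparability plus the gap dichotomy yields the prewellordering with jump as successor. If you intend to actually write out the proof rather than cite it, be aware that filling in the \ref{mc1} case in particular is nontrivial and your sketch as written would not suffice without substantial expansion.
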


This result is significant because we do not know of any $T$-invariant function that is not uniformly $T$-invariant. Kihara and Montalbán \cite{kiharaUniformMartinsConjecture2018} have
argued that uniformity is also a characteristic of natural constructions and, as such, it should be reflected in Martin's Conjecture. Accepting this
argument means that the philosophical question underpinning Martin's Conjecture has already been solved; however, we are left with a different
challenge:

\begin{conjecture}[Steel \cite{steelClassificationJumpOperators1982}]
  Assume $AD$. Every $T$-invariant function is equivalent to a uniformly $T$-invariant function on a cone.
\end{conjecture}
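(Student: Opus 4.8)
\medskip
\noindent\emph{Proof proposal.} The natural plan is to invoke the Cone Theorem twice: once to extract, for each index pair $p$ coding a pair of computable functionals, a \emph{single} pair of reduction indices that serves on a cone, and once to fold the resulting assignment back into the function. Fix a $T$-invariant $f\colon\mathcal{P}(\mathbb{N})\to\mathcal{P}(\mathbb{N})$. We want a partial computable $u$ translating any witness $p=\langle j,i\rangle$ of $A\equiv_T B$ (that is, $\Phi_j(A)=B$ and $\Phi_i(B)=A$) into a witness $u(p)$ of $f(A)\equiv_T f(B)$, uniformly in $p$ and for $A,B$ on a cone; from such a $u$ one expects a genuinely uniformly $T$-invariant $g$ with $f\leq_T^{\cone}g$ and $g\leq_T^{\cone}f$, obtained by absorbing $u$ (and a cone base) into $f$ as an extra summand.

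The heart of the matter is building $u$. For fixed $p=\langle j,i\rangle$, consider the game in which Player~I enumerates a real $A$ and Player~II enumerates a pair $\langle j^{*},i^{*}\rangle$; restrict attention to plays in which $B:=\Phi_j(A)$ is total with $\Phi_i(B)=A$, and let Player~II win exactly when $\Phi_{j^{*}}(f(A))=f(B)$ and $\Phi_{i^{*}}(f(B))=f(A)$. By determinacy some player wins; a winning strategy for Player~I ought to be excluded by $T$-invariance, since every such $A$ does admit a correct pair, while a winning strategy $\sigma_p$ for Player~II is a single real. Were the pair it outputs to depend only on $p$ and not on the play, one could gather the assignment $p\mapsto u(p)$, together with a cone base $C$, into one real $r$; setting $g(X):=f(X)\oplus r$ would give $g(X)\equiv_T f(X)$ for $X\geq_T r$, and the common rider $r$ would let a single computable procedure recover $u(p)$ from $p$ and reduce $f(A)$ to $f(B)$ accordingly --- with a bit more bookkeeping to make $g$ uniformly $T$-invariant on all of $\mathcal{P}(\mathbb{N})$ rather than just on the cone where it matches $f$.

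The main obstacle is that both ``ought to''s above are exactly the crux, and at least the second fails as stated. $T$-invariance asserts only that \emph{each} Turing-equivalent pair $(A,B)$ admits \emph{some} pair of reduction indices for $(f(A),f(B))$, with no control over how it depends on $(A,B)$; so the game is too weak. Player~II's winning strategy still produces a pair depending on the whole play $A$, not on $p$ alone --- not the uniform choice we need --- and a winning strategy for Player~I contradicts nothing, since defeating one candidate pair on one particular $A$ is perfectly consistent with $f$ being $T$-invariant. Uniformity quantifies over all reals at once, whereas Martin-style determinacy arguments negotiate one real (or finitely many) at a time; I expect the genuine difficulty --- and the reason the conjecture remains open --- is that there is no known mechanism for promoting the pointwise existence of reductions to a uniform selection of them, and a proof would have to supply precisely that missing ingredient.
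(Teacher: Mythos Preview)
The statement you attempted is not a theorem but an open conjecture; the paper provides no proof of it. Indeed, the paper explicitly records in \hyperref[sec:Steelconj]{Section~\ref{sec:Steelconj}} that ``Steel's conjecture remains open and a positive result implies Martin's Conjecture.'' There is therefore no paper proof against which to compare your proposal.

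Your write-up is not really a proof attempt either: it is an honest exploration of a naive determinacy strategy, followed by a correct diagnosis of why it collapses. Your identification of the gap is accurate --- Player~II's winning strategy in the game you set up yields an output depending on the entire play $A$, not merely on the index pair $p$, so no uniform $u(p)$ falls out; and a winning strategy for Player~I is not contradictory, since $T$-invariance promises only \emph{some} reduction pair for each $(A,B)$, not a fixed one defeating every adversary. Your closing remark that the obstacle is precisely the lack of a mechanism to promote pointwise existence of reductions to a uniform selection is the standard understanding of why the problem is hard. In short: there is nothing to grade here against the paper, because the paper claims nothing; your discussion is a reasonable informal account of why the obvious approach does not settle the conjecture.
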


We are interested in Martin's Conjecture in the context of the enumeration degrees due to their close structural relationship to the Turing degrees. A
set $A$ is \emph{enumeration reducible} to $B$, denoted by $A\leq_eB$, if every enumeration of $B$ computes an enumeration of $A$ (see \hyperref[ereducibility]{Definition~\ref{ereducibility}}). The
structure of the enumeration degrees $\mathcal{D}_e$ is then defined in the usual way as the set of equivalence classes of the symmetric closure of enumeration
reducibility. The Turing degrees $\ca{D}_T$ embed into the enumeration degrees \cite{myhillNoteDegreesPartial1961}; moreover, this copy of the Turing degrees is first-order definable in
$\ca{D}_{e}$ (with the language $\{\leq\}$) \cite{DefinabilityTot2016}. So, the $e$-invariant functions provide a broader playground to test the uniqueness of the Turing jump. However, in
the enumeration degrees there is no single candidate to take the place of the Turing jump. In fact, there are two well-studied uniformly $e$-invariant
extensions of the Turing jump to the enumeration degrees: the enumeration jump---originally defined by Cooper \cite{Cooper_1984}---and the enumeration skip, introduced by
Andrews et al.\ \cite{Andrews2019OnDegrees}.  While both operators share common properties with the Turing jump, each of them presents a distinct global behaviour. For
instance, the enumeration jump is increasing while the skip is not, and the skip satisfies Friedberg's jump inversion but the enumeration jump does
not. Given this state of affairs, we explore the extent to which the uniqueness of the Turing jump is inherited by the enumeration jump and the skip,
and whether the phenomenon conjectured by Martin is reflected in the enumeration degrees.

In \hyperref[sec:negative-results]{Section~\ref{sec:negative-results}}, we show that the asymptotic behaviour of $e$-invariant functions is poles apart to that of uniformly $T$-invariant functions. Namely, the
preorder $\leq_e^{\cone}$ of non-constant $e$-invariant functions compared under enumeration reducibility on a cone is far from a prewellorder. For example,
it contains a copy of the enumeration degrees (and hence of the Turing degrees) and it is not locally countable. The ultimate culprit of this
difference is the lack of a Cone Theorem for the enumeration degrees. The main result of the section (\hyperref[thm:continuumcofinal]{Theorem~\ref{thm:continuumcofinal}}) is joint work with
Jacobsen-Grocott; it provides a general way to use the embedding of the Turing degrees to produce a plethora of examples of uniformly $e$-invariant
functions. For instance, we obtain a uniformly $e$-invariant function that is regressive but not constant in every upper cone.

\hyperref[sec:unifinv]{Section~\ref{sec:unifinv}} focuses on uniformly $e$-invariant functions and their uniformity functions. The construction from \hyperref[sec:negative-results]{Section~\ref{sec:negative-results}} produces uniformly
invariant functions only because they are \emph{locally constant}; that is, the image of every enumeration degree is a single set. Note that in the Turing
degrees, TD implies that locally constant functions are constant in some cone. However, through an analysis of uniformity functions, we show that any
countable family of uniformly $e$-invariant functions can be merged under a single uniformity function. As a consequence, the preorder
$\leq_e^{\cone}$ has infinite antichains of uniformly $e$-invariant functions, even if we ignore the locally constant functions.

After all the negative results, we produce a positive result in line with Martin's Conjecture. Adapting Bard's local approach \cite{bardUniformMartinsConjecture2020}, we prove the best
possible local version of part 1 of the Uniform Martin's Conjecture (UMC 1) in the enumeration degrees. Namely, uniformly $e$-invariant functions are
locally either constant, increasing, or above the skip.

\begin{thm:locumc}
  Let $A\subseteq \bb{N}$ and $f\colon\deg_e(A)\to \ca{P}(\bb{N})$ uniformly $e$-invariant. If $f$ is not constant, then
  \[A\leq_e f(A)\quad\mbox{ or }\quad A^\diamond\leq_e f(A).\]
\end{thm:locumc}

As a consequence, we extend part 1 of \hyperref[thm:ssumc]{Theorem~\ref{thm:ssumc}} (UMC 1) to uniformly invariant functions from the Turing degrees to the enumeration degrees. \hyperref[thm:locumc]{Theorem~\ref{thm:locumc}} also provides
insight on the difference between invariance and uniform invariance. In \hyperref[sec:Steelconj]{Section~\ref{sec:Steelconj}}, we produce a Borel
$e$-invariant function that is not equivalent to a Borel uniformly $e$-invariant function on any upper cone by taking advantage of another structural difference between the Turing
degrees and the enumeration degrees: the existence of $\ca{K}$-pairs. The lack of uniformity comes from an incompatibility between the combinatorics of
$\ca{K}$-pairs and \hyperref[thm:locumc]{Theorem~\ref{thm:locumc}}. As far as we know, this is the first such example in any degree structure.

All the theorems assume only $ZF$, unless otherwise stated.

\section{Enumeration Degrees and the Skip}
\label{sec:edegrees}

Enumeration reducibility was first studied as a generalization of Turing reducibility to the case where only partial information about the oracle is
accessible. While in a Turing reduction $A\leq_TB$ we deal with a program that asks finitely many questions about the oracle $B$ to decide if an element
$n$ is in $A$, in an enumeration reduction $A\leq_eB$ we have a program that can only access positive information about $B$ (i.e.\ if $n\in B$, the program
eventually learns this) and is only required to produce positive information about $A$. In other words, we say that $A$ is \emph{enumeration reducible} to
$B$ if there is an algorithm that takes any enumeration of $B$ as input and returns an enumeration of $A$. This algorithm can be thought as a c.e.\
list of axioms of the form \emph{``if $D\subseteq B$, enumerate $n$ into $A$''} for $n \in \bb{N}$ and $D$ finite. This is made precise with the following definition:

\begin{definition}[Friedberg and Rogers, \cite{FriedbergRogers1959}]\label{ereducibility}
  Let $A,B\subseteq \bb{N}$. We say that $A$ is enumeration reducible to $B$, denoted by $A\leq_e B$, if there is a c.e.\ set $\Gamma$ such that
  \[n\in A\quad \mbox{if and only if}\quad \exists u (\langle n, u\rangle \in \Gamma \;\&\; D_u\subseteq B)\,,\] where $\{D_u\}$ is the standard numbering of finite sets.  In this context, we say that
  $\Gamma$ is an enumeration operator (or just an $e$-operator) and we write $A=\Gamma(B)$. To simplify notation, we identify a finite set $D$ with its
  canonical index $u$. So, we write $\langle n, D\rangle \in\Gamma $ instead of $\langle n,u \rangle \in \Gamma $.
\end{definition}

Since the enumeration operators are exactly the c.e.\ sets, we have a standard computable numbering of all enumeration operators
$\{\Gamma_j\}_{j\in\bb{N}}$. When $A=\Gamma_i (B)$, we say that $A\leq_{e}B$ via $i$.

As usual, one can define \emph{enumeration equivalence} by setting $A\equiv_{e }B$ when $A\leq_eB$ and $B\leq_eA$. If $A\leq_e B$ via $i$ and
$B\leq_eA$ via $j$, we say that $A\equiv_e B$ via $\langle i,j\rangle$. Enumeration equivalence is an equivalence relation and induces the degree structure
$\ca{D}_e=\ca{P}(\bb{N})/\equiv_e$ which we call the \emph{enumeration degrees}. It is an upper semi-lattice with least element $\boldsymbol{0}_e$, the degree of c.e.\ sets, and least upper
bound given by the usual join operator.

The enumeration degrees are a natural extension of the Turing degrees, this is witnessed by lifting the map $\iota(A)=A\oplus \overline{A}$ of sets to degrees because
\[A \leq_T B\quad \mbox{if and only if}\quad A \oplus \overline{A}\leq_e B \oplus \overline{B}.\] A set $A$ is called \emph{total} if
$A\geq_e\overline{A}$ (equivalently, if $A\equiv_e\iota(A)$) and an enumeration degree is total if it contains a total set. Thus, the total enumeration degrees form an
isomorphic copy to the Turing degrees inside the enumeration degrees. Moreover, the total enumeration degrees are first-order definable in $\ca{D}_e$ \cite{DefinabilityTot2016}.

However, not every enumeration degree is total \cite{Medvedev}. In fact, the non-total degrees are cofinal. Call an enumeration degree $\tdeg{b}$ a \emph{quasiminimal cover} of a
degree $\tdeg{a}$ if $\tdeg{a}<\tdeg{b}$ and there is no total degree strictly between $\tdeg{a}$ and $\tdeg{b}$. Similarly, $\tdeg{b}$ is a \emph{strong quasiminimal cover} of a degree
$\tdeg{a}$ if $\tdeg{a}<\tdeg{b}$ and every total degree $\tdeg{c}\leq\tdeg{b}$ also satisfies $\tdeg{c}\leq\tdeg{a}$. Sufficiently generic sets produce strong quasiminimal covers. Given a set
$A\in\tdeg{a}$, we say that $G$ is \emph{$\langle A\rangle$-generic} if for every set $W \subseteq 2^{<\omega}$ such that $W\leq_e A$, we have that
\[\exists\sigma \prec G\;\left( \sigma \in W \; \; \lor \; \; \forall \tau\succ\sigma \left( \tau \notin
      W\right)\right).\] This notion extends 1-genericity in the Turing degrees. Namely, $G$ is 1-generic relative to a (total) oracle $A$ if and only
if $G$ is $\langle A\oplus \overline{A}\rangle $-generic. In this case, we simply say that $G$ is $A$-generic.

\begin{proposition}[essentially Medvedev \cite{Medvedev}] \label{prop:generic-quasimin} Let $A,G\subseteq \bb{N}$ such that $G$ is $\langle A\rangle$-generic. Then $G\oplus A$ is a strong quasiminimal cover of $A$.
\end{proposition}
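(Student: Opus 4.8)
The plan is to carry out the relativized, forcing-style version of Medvedev's quasiminimal-degree construction: verify the two clauses defining a strong quasiminimal cover for $\tdeg{a}=\deg_e(A)$ and $\tdeg{b}=\deg_e(G\oplus A)$, namely $\tdeg{a}<\tdeg{b}$ (which, since $A\leq_e G\oplus A$ always, amounts to $G\not\leq_e A$) and that every total degree $\tdeg{c}\leq\tdeg{b}$ satisfies $\tdeg{c}\leq\tdeg{a}$, i.e.\ every total $C\leq_e G\oplus A$ satisfies $C\leq_e A$. The only property of $G$ used is $\langle A\rangle$-genericity, always in the same way: given a set of strings $W\leq_e A$, take an initial segment of $G$ that either lies in $W$ or has no proper extension in $W$, and play the two cases against each other. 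As a warm-up, note that $G$ is infinite: applied to the computable sets $W_n=\{\sigma:\exists x\,(n\leq x<|\sigma|\wedge\sigma(x)=1)\}$, genericity cannot take the avoidance option (every string has a proper extension in $W_n$), so some $\sigma\prec G$ lies in $W_n$, exhibiting an element of $G$ above $n$.

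For $G\not\leq_e A$, suppose $G=\Gamma(A)$ and set
\[
  W=\{\sigma:\exists x\,\exists D\,(\langle x,D\rangle\in\Gamma\ \wedge\ D\subseteq A\ \wedge\ x<|\sigma|\ \wedge\ \sigma(x)=0)\},
\]
so $W\leq_e A$. If some $\sigma\prec G$ lies in $W$ then $x\notin G$ (because $\sigma(x)=0$) while $x\in\Gamma(A)=G$, a contradiction; so genericity gives $\sigma\prec G$ with no proper extension in $W$. Since $G$ is infinite, pick $x\in G$ with $x\geq|\sigma|$ together with an axiom $\langle x,D\rangle\in\Gamma$, $D\subseteq A$, and extend $\sigma$ to a string $\tau\succ\sigma$ of length $x+1$ with $\tau(x)=0$; then $\tau\in W$, contradiction.

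For the strong quasiminimality, fix a total $C\leq_e G\oplus A$; since $\overline C\leq_e C$ we may fix $e$-operators with $C=\Gamma(G\oplus A)$ and $\overline C=\Delta(G\oplus A)$. Using the coding of the join, present each axiom of an operator applied to $G\oplus A$ as a pair of finite sets $(D_G,D_A)$---the requirements on $G$ and on $A$---so that the axiom fires exactly when $D_G\subseteq G$ and $D_A\subseteq A$. Call a string $\sigma$ a \emph{witness for $n$ via $\Theta$ over $A$} if $\Theta$ has an axiom with output $n$, $A$-side contained in $A$, and $G$-side contained in $\{x<|\sigma|:\sigma(x)=1\}$; this relation between $\sigma$ and $n$ is $\leq_e A$, and if moreover $\sigma\prec X$ then $n\in\Theta(X\oplus A)$. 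The key set is
\[
  W=\{\sigma:\exists n\,(\sigma\text{ is a witness for }n\text{ via }\Gamma\text{ over }A\text{ and via }\Delta\text{ over }A)\},
\]
which is $\leq_e A$. No initial segment of $G$ lies in $W$, since such a $\sigma$ would force its witness $n$ into both $\Gamma(G\oplus A)=C$ and $\Delta(G\oplus A)=\overline C$; so $\langle A\rangle$-genericity produces $\rho_0\prec G$ with no extension (proper or not) in $W$. Now let $\Psi$ be the $e$-operator with axioms $(n,D_A)$ for every axiom $(D_G,D_A)$ of $\Gamma$ whose $G$-side is compatible with $\rho_0$, i.e.\ $D_G\cap\{x<|\rho_0|:\rho_0(x)=0\}=\emptyset$. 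Then $\Psi(A)=C$. Indeed, $C\subseteq\Psi(A)$ because any axiom of $\Gamma$ firing on $G\oplus A$ has $G$-side inside $G$, hence compatible with $\rho_0$ since $\rho_0\prec G$. For the reverse inclusion, suppose $(D_G,D_A)$ witnesses $n\in\Psi(A)$ but $n\notin C$; then $n\in\overline C=\Delta(G\oplus A)$ via some axiom $(E_G,E_A)$ with $E_G\subseteq G$, so $E_G$ is compatible with $\rho_0$ as well, and then so is $D_G\cup E_G$; amalgamating it into a single string $\tau\supseteq\rho_0$ makes $\tau$ a witness for $n$ via both $\Gamma$ and $\Delta$ over $A$, whence $\tau\in W$, contradicting the choice of $\rho_0$. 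Hence $C=\Psi(A)\leq_e A$.

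The main obstacle is the strong-quasiminimality step, and within it the design of $W$: it has to be finitary enough that $G$ meeting $W$ is outright contradictory---this forces $G$ to \emph{strongly} avoid $W$---yet substantial enough that avoidance past a single string $\rho_0$ already determines $C$ from $A$. This is precisely where totality of $C$ is used, via the companion operator $\Delta$ computing $\overline C$; the remaining work---splitting axioms into $G$- and $A$-parts and amalgamating finitely many $\rho_0$-compatible requirements into one extension of $\rho_0$---is routine finite combinatorics.
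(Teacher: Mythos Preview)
Your proof is correct. The paper does not actually give a proof of this proposition---it is stated with attribution to Medvedev and then immediately used---so there is nothing to compare against; your argument is the standard relativized version of Medvedev's quasiminimal-degree construction and would serve perfectly well as the omitted proof.
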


This proposition already produces a failure of the cone theorem in the enumeration degrees, as both the total and the non-total degrees are
cofinal. We will dive into the consequences of this failure in the next section.

There is another class of enumeration degrees that will be of particular interest to us---the \emph{cototal degrees}. They were studied by Andrews et al.\ \cite{Andrews2019OnDegrees}
where they showed a broad range of applications to effective mathematics. A set $A$ is \emph{cototal} if $A\leq_e\overline{A}$ and an enumeration degree is cototal if it
contains a cototal set. Every total degree $\tdeg{a}$ is cototal because if $A\in\tdeg{a}$ is total, then
$A\equiv_eA\oplus\overline{A}\equiv_e\overline{A}\oplus A=\overline{A\oplus\overline{A}}$. However, there are cototal degrees that are not total. Gutteridge \cite{gutteridgeResultsEnumerationReductibility1971} and Sorbi \cite{sorbiQuasiminimalEdegreesTotal1988} independently constructed cototal quasiminimal
degrees (a degree is quasiminimal if it is a quasiminimal cover of $\tdeg{0}_{e}$, the enumeration degree of c.e.\ sets). These constructions can be
relativized to total degrees, showing that the cototal degrees that are not total are also cofinal. Moreover, most sets are not cototal, both in terms
of measure and category. Li \cite{LI_2023} showed that the weakly 3-random sets cannot have cototal degree, concluding that the cototal degrees have measure zero,
and Andrews et al.\ \cite{Andrews2019OnDegrees} proved that the enumeration degrees of 2-generic sets are not cototal, which implies that they are meager.

The Turing jump can be extended to all enumeration degrees \cite{Cooper_1984}. By analogy to the halting set, we define for every $A\subseteq \bb{N}$ the set $K_A$ by
\[K_A=\{\langle i,n\rangle \mid n \in \Gamma _i(A)\}.\] In contrast to the halting set and Turing equivalence, $A\equiv_eK_A$ for any $A$. However, we always have that
$A\ngeq_e \overline{K}_A$. Thus, we define the \emph{enumeration jump} of $A$ as \[A'= K_{A}\oplus \overline{K}_A.\]

\begin{proposition}[Cooper \cite{Cooper_1984}] \label{prop:ejumpProps} Let $A,B\in\ca{P}(\bb{N})$. Then
  \begin{enumerate}[(a)]
    \item \label{prop:jumpincreasing} $A<_e A'$.
    \item\label{prop:jumpmonotone} If $A\leq_e B$, then $A'\leq_e B'$. In particular, the enumeration jump is $e$-invariant.
    \item\label{prop:jumponered} $A\leq_eB$ if and only if $K_A\leq_1 K_B$.
    \item\label{prop:jumptotal} $A'$ is total.
  \end{enumerate}
\end{proposition}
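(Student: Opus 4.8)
The plan is to prove the four items in the order (c), (b), (a), (d), since the $1$-reduction $K_A\leq_1 K_B$ of item (c) is the engine that drives the rest. I will freely use the two facts recorded just above the statement: that $A\equiv_e K_A$ for every $A$ (the forward direction via an index for the identity operator, the backward direction because the condition ``$\langle i,n\rangle\in K_A$'' is uniformly $\Sigma^0_1$ over $A$), and the diagonal fact $\overline{K}_A\nleq_e A$. For item (c): one direction is immediate, since $\leq_1$ implies $\leq_e$ (read a $1$-reduction $h$ as the $e$-operator $\{\langle x,\{h(x)\}\rangle\}$), so $K_A\leq_1 K_B$ yields $A\equiv_e K_A\leq_e K_B\equiv_e B$. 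For the converse, suppose $A=\Gamma_i(B)$; enumeration operators compose, with an index for the composite computable from the two indices, so fix a computable $\mathrm{comp}$ with $\Gamma_{\mathrm{comp}(j,i)}=\Gamma_j\circ\Gamma_i$. Then $\langle j,n\rangle\in K_A$ iff $n\in\Gamma_j(\Gamma_i(B))$ iff $\langle\mathrm{comp}(j,i),n\rangle\in K_B$, so $\langle j,n\rangle\mapsto\langle\mathrm{comp}(j,i),n\rangle$ is a many-one reduction $K_A\to K_B$; precomposing $\mathrm{comp}(\cdot,i)$ with a padding function (each $e$-operator has infinitely many, uniformly computable, indices) makes it injective, giving $K_A\leq_1 K_B$. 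This uniformity-and-padding bookkeeping is the only point that asks for genuine care.

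For item (b): if $A\leq_e B$, item (c) gives a computable injection $h$ witnessing $K_A\leq_1 K_B$, and the same $h$ witnesses $\overline{K}_A\leq_1\overline{K}_B$ because ``$x\in K_A\iff h(x)\in K_B$'' is equivalent to ``$x\notin K_A\iff h(x)\notin K_B$''; hence $A'=K_A\oplus\overline{K}_A\leq_e K_B\oplus\overline{K}_B=B'$, and applying this in both directions gives $e$-invariance. For item (a): since $A\equiv_e K_A\leq_e K_A\oplus\overline{K}_A=A'$ we get $A\leq_e A'$, and if $A'\leq_e A$ then $\overline{K}_A\leq_e A'\leq_e A$, contradicting the diagonal fact; so $A<_e A'$. (That diagonal fact is the usual argument: with $D=\{e:\langle e,e\rangle\notin K_A\}\leq_e\overline{K}_A$, an equation $D=\Gamma_j(A)$ forces $j\in D\iff\langle j,j\rangle\in K_A\iff j\notin D$.)

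For item (d): complementation commutes with the join, $\overline{X\oplus Y}=\overline{X}\oplus\overline{Y}$, and $X\oplus Y\equiv_e Y\oplus X$, so $\overline{A'}=\overline{K_A\oplus\overline{K}_A}=\overline{K}_A\oplus K_A\equiv_e K_A\oplus\overline{K}_A=A'$; thus $\overline{A'}\leq_e A'$ and $A'$ is total. Overall there is no serious obstacle here: the proposition is a direct unwinding of the definitions of $K_A$, the join, and enumeration operators, with the only finicky step being the construction of the genuine $1$-reduction in item (c).
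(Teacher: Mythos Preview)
Your proof is correct. The paper itself does not supply a proof of this proposition: it is stated as a background result attributed to Cooper \cite{Cooper_1984}, with no argument given. Your write-up is the standard proof, and the ordering (c)$\Rightarrow$(b)$\Rightarrow$(a), then (d), is the natural one; the only point requiring any care is the padding step in (c) to upgrade the many-one reduction to a $1$-reduction, which you handle correctly.
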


Note that the first three properties match similar results for the Turing jump and the halting set. Moreover, the proof of \ref{prop:jumponered} is uniform, which implies
that the enumeration jump is uniformly $e$-invariant because if $A\equiv_eB$, then $K_A\equiv_1K_B$ and $\overline{K_A}\equiv_1\overline{K_B}$. However, the fact that
$A'$ is always total produces a difference. Since non-total degrees are cofinal, no cone of enumeration degrees is contained in the range of the jump
operator.

An alternative extension of the Turing jump is given by the skip operator. This operator was only recently studied by Andrews et al.\ \cite{Andrews2019OnDegrees}, and while
it also shares many useful properties with the Turing jump, it presents quite a distinct behaviour. The \emph{skip of $A$} is defined as the set
$A^\diamond=\overline{K}_A$ and similarly to the enumeration jump, it is a uniformly $e$-invariant function. The degree of $A^{\diamond }$ is the maximum degree of a set
$\overline{B}$ such that $B\equiv_eA$. From this perspective, it is not surprising that cototality and the skip operator are closely related.

\begin{proposition}[Andrews et al. \cite{Andrews2019OnDegrees}] \label{skipprop} Let $A,B\in\ca{P}(\bb{N})$. Then
  \begin{enumerate}[(a)]
    \item $A^{\diamond }\nleq_e A $
    \item \label{prop:skipincreasing} $A$ has cototal degree if and only if $A\leq_e A^{\diamond }$ if and only if $A^{\diamond }\equiv_eA'$.
    \item \label{prop:skiponered} $A\leq_eB$ if and only if $A^{\diamond }\leq_1 B^{\diamond }$.
    \item \label{prop:skipinv} If $\boldsymbol{0}_e'\leq_e A$, there is a set $G$ such that $G^{\diamond }\equiv_e A$
  \end{enumerate}
\end{proposition}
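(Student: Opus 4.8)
The plan is to read off parts (a)--(c) from two elementary observations about the set $K_A$, and to treat (d) --- the Friedberg inversion theorem for the skip --- by a forcing construction. The first observation is that $A\equiv_e K_A$ (the direction $K_A\le_e A$ is witnessed by the $e$-operator that enumerates $\langle i,n\rangle$ once some $D\subseteq A$ with $\langle n,D\rangle\in\Gamma_i$ has appeared). The second is a \emph{maximum-complement lemma}: if $B\le_e A$, say $B=\Gamma_j(A)$, then $x\notin B\iff\langle j,x\rangle\notin K_A\iff\langle j,x\rangle\in A^\diamond$, so $\overline B\le_e A^\diamond$; taking $B=K_A$ shows $A^\diamond$ is itself of this form, so it represents the largest degree among complements of sets $\equiv_e A$.

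For (a): if $A^\diamond=\overline{K_A}\le_e A$ then, since $K_A\equiv_e A$, we would get $A'=K_A\oplus\overline{K_A}\le_e A$, contradicting $A<_e A'$ (Proposition~\ref{prop:ejumpProps}). A self-contained diagonalization also works: assuming $\overline{K_A}=\Gamma_i(A)$, the recursion theorem for $e$-operators yields an index $e$ with $0\in\Gamma_e(Z)\iff\langle e,0\rangle\in\Gamma_i(Z)$ for every $Z$, whence $\langle e,0\rangle\in K_A\iff 0\in\Gamma_e(A)\iff\langle e,0\rangle\in\Gamma_i(A)=\overline{K_A}$, which is impossible. For (c): one injective computable function $f$ satisfies $x\in K_A\iff f(x)\in K_B$ exactly when it satisfies $x\in\overline{K_A}\iff f(x)\in\overline{K_B}$, so $K_A\le_1 K_B\iff A^\diamond\le_1 B^\diamond$; combining this with Proposition~\ref{prop:ejumpProps} (which gives $A\le_e B\iff K_A\le_1 K_B$) yields (c).

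For (b) I would run a short cycle of implications. After replacing $A$ by $K_A$, the statement $A\le_e A^\diamond$ reads $K_A\le_e\overline{K_A}$, i.e.\ $K_A$ is cototal; since $K_A\equiv_e A$ this is equivalent to $A$ having cototal degree --- if $K_A$ is cototal so is $\deg_e(A)$, and conversely any cototal $B\equiv_e A$ has $A\equiv_e B\le_e\overline B\le_e A^\diamond$ by the maximum-complement lemma. Moreover, $A\le_e A^\diamond$ gives $K_A\le_e\overline{K_A}$, hence $A'=K_A\oplus\overline{K_A}\le_e\overline{K_A}=A^\diamond$, and since $A^\diamond\le_e A'$ trivially we get $A^\diamond\equiv_e A'$; conversely $A^\diamond\equiv_e A'$ gives $A\le_e A'\equiv_e A^\diamond$ by Proposition~\ref{prop:ejumpProps}. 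Chained together, these implications give all three equivalences.

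Part (d) is where the real work lies; I would prove it as a Friedberg-style jump-inversion theorem for the skip. Given $X$ with $\tdeg{0}_e'\le_e X$ --- so $X$ enumerates $\overline{K_\emptyset}$ and can positively confirm arbitrary $\Pi^0_1$ facts --- one builds $G$ by a forcing construction driven by an enumeration of $X$. For each pair $\langle i,n\rangle$ one decides whether $n\in\Gamma_i(G)$: if some extension of the current condition admits a finite $G$-witness for $n\in\Gamma_i(G)$, commit to it permanently; otherwise no extension does --- a $\Pi^0_1$ fact about the c.e.\ operator $\Gamma_i$, stable under further extension and confirmable from $X$ --- and $n$ is kept out of $\Gamma_i(G)$ for good. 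Alongside these requirements one codes the enumeration of $X$ and imposes enough genericity that $\overline{K_G}$ recovers nothing beyond the coded copy of $X$ together with $\overline{K_\emptyset}$; since $\tdeg{0}_e'\le_e X$ this amounts to $X$ again, giving $\overline{K_G}\le_e X$, while the coding gives $X\le_e\overline{K_G}$, i.e.\ $G^\diamond\equiv_e X$. The main obstacle is this coding: since $X$ is only enumerated (not decided) and $G$ grows monotonically, a naive ``write down the bits of $X$'' approach fails --- indeed $G\le_e X$ would force $X$ to be an enumeration jump, which need not hold --- so it is precisely the hypothesis $\tdeg{0}_e'\le_e X$ (equivalently, by (b), that $X$ is jump-like) that makes a correct coding possible; the full bookkeeping is carried out in \cite{Andrews2019OnDegrees}.
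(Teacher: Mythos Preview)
The paper does not prove this proposition; it is stated with attribution to Andrews et al.\ and no proof is given in the paper itself. So there is no in-paper argument to compare your proposal against.

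On the substance of your proposal: your arguments for (a), (b), and (c) are correct and efficient. The maximum-complement observation (that $\overline B\le_e A^\diamond$ whenever $B\le_e A$) is exactly the right engine for (b), and reducing (a) and (c) to the corresponding facts about $K_A$ in Proposition~\ref{prop:ejumpProps} is the standard route.

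For (d), what you have written is a plausible outline rather than a proof, and you say as much by deferring to \cite{Andrews2019OnDegrees}. Two points are worth flagging. First, the forcing you describe is underspecified: in the enumeration setting a ``condition'' cannot just be a finite subset of $G$, since enumeration operators are monotone and you need a way to \emph{keep} elements out of $G$ permanently in order to force $n\notin\Gamma_i(G)$; the actual construction in \cite{Andrews2019OnDegrees} uses conditions that also commit to negative information (so that ``no extension enumerates $n$'' really is a $\Pi^0_1$ fact decidable from $\overline{K_\emptyset}$). Second, your parenthetical ``equivalently, by (b), that $X$ is jump-like'' is not what (b) says: (b) characterizes when $A^\diamond\equiv_e A'$, not when a set lies above $\tdeg{0}_e'$. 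The hypothesis $\tdeg{0}_e'\le_e X$ is used precisely so that an enumeration of $X$ can confirm the $\Pi^0_1$ outcomes in the construction, not because $X$ must be a jump.
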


Even though the skip and the enumeration jump agree on the cototal degrees, they can differ wildly. For example, there are sets in which the skip is
cyclic \cite{Andrews2019OnDegrees}. Namely, there is a set $A$ such that $\left(A^{\diamond}\right)^{\diamond }\equiv_eA$, although any set with this property is fairly complicated (bounds all
hyperarithmetical sets). With this in mind, a naive reading of Martin's Conjecture is false when translated to the enumeration degrees: the skip is a
natural operation on the enumeration degrees that is not constant, not increasing, and not comparable to the jump in every upper cone. A natural
question is then what kind of behaviour can we get from the ``natural'' functions on the enumeration degrees. For example, Kihara and Montalbán \cite{kiharaUniformMartinsConjecture2018} proved
that, under a strengthening of $AD$, the uniformly invariant functions from the Turing degrees to the many-one degrees are not well-ordered when
compared on a cone, but they satisfy the closest thing: their order is isomorphic to the Wadge order on Cantor space. We wish to explore if a similar
behaviour is possible in the enumeration degrees.

\section{Global Failure} \label{sec:negative-results}

There are plenty of natural classes of enumeration degrees that are cofinal and pairwise disjoint. We have already mentioned the total, the non-total
cototal, and the non-cototal degrees; but there many more. Most of these classes can be characterized from topological considerations as every
effective second-countable $T_0$ topological space produces a class of enumeration degrees \cite{Kihara_Pauly_2022}. A particularly interesting example is the class of
continuous degrees, the enumeration degrees of points in Hilbert's cube $[0,1]^{\omega }$, which sits strictly between the total and the cototal
degrees. Moreover, the continuous degrees turn out to be definable in the enumeration degrees \cite{andrewsCharacterizingContinuousDegrees2019}. With this topological approach, it is possible to
isolate countably-many classes of enumeration degrees that are all cofinal and disjoint. For the details, refer to \cite{kiharaEnumerationDegreesNonmetrizable}.

Cofinal classes allow us to stitch together different $e$-invariant functions to define a new one that is incomparable with all of them in any
cone. The following example is the simplest application of this method.

\begin{example} \label{example:counterglobal} Define ${h:\ca{P}(\bb{N})\to \ca{P}(\bb{N})}$ as
  \[h(A)=\begin{cases}A & \mbox{if }A \mbox{ has cototal degree;}\\A^{\diamond} & \mbox{otherwise.}\end{cases}\] Note that both of the maps
  $A\mapsto A$ and $A \mapsto A^{\diamond}$ are $e$-invariant, so $h$ is $e$-invariant. Moreover, for every $X\subseteq \bb{N}$, there are $A,B>_e X$ such that
  $A$ has cototal degree and $B$ does not. In that case, $A\leq_e h(A)$ but $B \nleq_e h(B)$ by \hyperref[skipprop]{Proposition~\ref{skipprop}}.  Similarly,
  $h(A)<_eA^{\diamond}$ but $B^{\diamond}\leq_e h(B)$. So, $h$ is a Borel $e$-invariant function that is not constant, not increasing, and not comparable to either the
  jump or the skip on any upper cone.
\end{example}

This idea can be pushed by finding larger families of pairwise disjoint cofinal sets of enumeration degrees. While the topological approach mentioned
above can be used to combine, for example, countably many $e$-invariant functions to produce infinite antichains in the partial preorder of Borel
$e$-invariant functions compared on a cone, the following result provides a more general and systematic approach. We say that a function
$g\colon \ca{P}(\bb{N})\times\ca{P}(\bb{N})\to\ca{P}(\bb{N})$ is \emph{arithmetical} if for every $X,Y\subseteq \bb{N}$ there is some $n\in\bb{N}$ such that $g(X,Y)\leq_T (X\oplus Y)^{(n)}$.

\begin{theorem}[joint with Jacobsen-Grocott] \label{thm:continuumcofinal} There is an arithmetical function ${g\colon\ca{P}(\bb{N})\times\ca{P}(\bb{N})\to\ca{P}(\bb{N})}$ such that
  \begin{enumerate}[(a)]
    \item $\ca{C}(A)=\{\deg_e \left(g(A,C)\right)\mid C\subseteq \bb{N}\}$ is a cofinal subset of $\ca{D}_e$.
    \item If $A\neq B$, then $\ca{C}(A)\cap \ca{C}(B)=\varnothing$.
  \end{enumerate}
\end{theorem}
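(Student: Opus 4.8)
The plan is to code the parameter $A$ directly into the enumeration degree of $g(A,C)$, using the second argument $C$ purely as a ``seed'' that pushes the resulting degree arbitrarily high. Write $C=\bigoplus_n C_n$ and build $g(A,C)=\bigoplus_n Y_n$, where the $n$-th block $Y_n$ records the bit $A(n)$ through a totality dichotomy: if $n\in A$ put $Y_n=\iota(C_n)=C_n\oplus\overline{C_n}$, which is total; if $n\notin A$ put $Y_n=G_n\oplus\iota(C_n)$, where $G_n$ is an $\langle\iota(C_n)\rangle$-generic built uniformly from $C_n$ (e.g.\ the leftmost path through the $C_n'$-computable tree of forcing conditions for the notion appearing in \autoref{prop:generic-quasimin}), so that $\deg_e(Y_n)$ is a strong quasiminimal cover of $\deg_e(\iota(C_n))$ and in particular non-total. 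Since $C_n\leq_e Y_n$ uniformly in $n$ we get $C\leq_e\iota(C)\leq_e g(A,C)$; hence clause~(a) is immediate, as for any target degree $\tdeg{x}$ we may take any $C$ with $\deg_e(C)=\tdeg{x}$.

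The heart of the matter is clause~(b): we must recover $A$ from the \emph{degree} $\tdeg{g}:=\deg_e(g(A,C))$, not from the literal block decomposition, since $\equiv_e$ destroys the latter. The idea is to read $A$ off from $\equiv_e$-invariant structural data of $\tdeg{g}$. One arranges the blocks so that the sequence $\langle\deg_e(Y_n)\rangle_n$ is recognisable inside the substructure $\{\tdeg{z}:\tdeg{z}\leq\tdeg{g}\}$ --- for instance by making consecutive blocks form $\mathcal{K}$-pairs with fixed ``index'' degrees, or, more robustly, by a Slaman--Woodin-style coding of the pair $(\mathbb{N},A)$ into the enumeration degrees below $\tdeg{g}$ together with a definable sequence of coding locations. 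One then inspects the $n$-th coding location and, using \autoref{prop:generic-quasimin} and the definability of the total degrees, checks whether the corresponding block degree is total or a strong quasiminimal cover, outputting $A=\{n:\text{the }n\text{-th block is total}\}$. Because this recovery uses only $\tdeg{g}$, it is insensitive to $C$: if $g(A,C)\equiv_e g(B,D)$ for \emph{any} $C,D$, the recovery returns the same set, so $A=B$.

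Arithmeticity is routine: each $G_n$ is computable from $C_n'$ uniformly in $n$, so $\bigoplus_n G_n\leq_T C'$, and $g(A,C)$ is obtained from $A$, $C$ and $\bigoplus_n G_n$ by a fixed effective procedure; thus $g(A,C)\leq_T(A\oplus C)'$, which is more than enough. The main obstacle is the rigidity demanded in clause~(b): the non-total blocks must be designed so that the bit pattern $A$ leaves a fingerprint in $\tdeg{g}$ that is invariant under $\equiv_e$ and not washed out by the join of all the blocks --- e.g.\ so that the join of the quasiminimal blocks does not accidentally become total, and so that the coding locations remain identifiable no matter how complicated $C$ is. Getting the interaction between the coding of $A$, the strong quasiminimality provided by \autoref{prop:generic-quasimin}, and the join operation to cohere is the technical core of the argument.
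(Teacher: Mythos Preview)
Your construction as written fails clause~(b). Take $A=\mathbb{N}$ and $B=\mathbb{N}\setminus\{0\}$. Then $g(A,C)=\bigoplus_n\iota(C_n)\equiv_e\iota(C)$, so $\mathcal{C}(A)$ is exactly the set of total enumeration degrees. For $B$, the only non-total block of $g(B,D)$ is $Y_0=G_0\oplus\iota(D_0)$, and your leftmost-path recipe gives $G_0\leq_T D_0'$. Choose $D$ with $D_0'\leq_T D_1$: then $G_0\leq_e\iota(D_1)\leq_e\iota(D)$, whence $g(B,D)\equiv_e G_0\oplus\iota(D)\equiv_e\iota(D)$ is total and lies in $\mathcal{C}(A)$. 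The problem is structural: your generics are arithmetical in the column they sit over, so a sufficiently strong neighbouring column absorbs them and erases the bit they were meant to record. The repairs you gesture at do not escape this, because any scheme for locating ``the $n$-th block'' inside $\deg_e(g(A,C))$ must point to degrees that vary with $C$, while the recovery of $A$ must be $C$-independent; you have not said how to reconcile these, and you acknowledge this is exactly the obstacle.

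The paper avoids block-coding entirely. It sets $g(A,C)=M_{A\oplus C}$ where $M_X$ is a \emph{single} strong quasiminimal cover of $\iota(X)$, and observes that such a cover automatically remembers the total degree beneath it: if $M_X\leq_e M_Y$ then $\iota(X)$, being total and below $M_Y$, must lie below $\iota(Y)$, so $M_X\equiv_e M_Y$ already forces $X\equiv_T Y$. This disposes of all pairs $X\neq Y$ in distinct Turing degrees with no coding at all. The remaining case---$X\equiv_T Y$ but $X\neq Y$---is handled by building the generics $G_X$ for all $X$ in a fixed Turing degree simultaneously and explicitly diagonalizing each $M_X$ against every $e$-reduction to every other $M_Y$, with the conflicts between diagonalization requirements resolved by a well-founded priority on $\deg_T(X)$ read off from the orbit structure of each pair $\langle \Phi_i,\Phi_j\rangle$ of Turing reductions. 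This costs more jumps (the paper gets $g(A,C)\leq_T(A\oplus C)^{(4)}$ rather than your $(A\oplus C)'$), but it actually delivers clause~(b).
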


\begin{proof}
  For each $A\subseteq \bb{N}$, we will construct a strong quasiminimal cover $M_A$ of $A\oplus \overline{A}$ such that if $A\neq
  B$, then $M_{A}\nequiv_e M_{B}$. After we finish the construction, we can define the function $g(A,C)=M_{A\oplus
    C}$.  Before moving on to the construction, we argue that $g$ meets the conditions of the theorem.

  First, take any $A\subseteq \bb{N}$. We want to verify that $\ca{C}(A)$ is cofinal, so let $C\subseteq \bb{N}$. Then,
  \[C\leq_eA\oplus C\oplus\overline{A\oplus C}<_e M_{A\oplus C}\in \ca{C}(A).\] Now, let $A\neq B$ be sets of natural numbers. If
  ${\ca{C}(A)\cap\ca{C}(B)\neq\varnothing}$, there are $C,D\subseteq \bb{N}$ such that $M_{A\oplus C}\equiv_eM_{B\oplus D}$. By construction, $M_{A\oplus C}\equiv_eM_{B\oplus D}$ only holds if $A\oplus C=B\oplus D$; so $A=B$.

  The main challenge in the construction is to make sure that $M_{A}\nequiv_eM_{B}$ whenever $A\neq B$, while at the same time keeping the construction
  arithmetical. To overcome this, we first give a general argument that guarantees $M_{A}\nequiv_eM_{B}$ for $A$ and $B$ in distinct Turing degrees. Then,
  we will handle the case where $A$ and $B$ are in the same Turing degree in the construction.

  Suppose that $M_A\leq_e M_B$, then $A\oplus \overline{A}<_e M_{A}\leq_e M_{B}$. Since $A\oplus \overline{A}$ is total and $M_{B}$ is a strong quasiminimal cover of
  $B\oplus\overline{B}$, it follows that $A\oplus\overline{A}\leq_eB\oplus\overline{B}$; that is, $A\leq_TB$. Hence, if $A\nequiv_TB$ then
  $M_A\nequiv_e M_{B}$. Now, we proceed to the construction. Since we still need to ensure that $M_{A}\nequiv_e M_{B}$ in the case that $A\neq B$ but
  $A\equiv_T B$, we will simultaneously build $M_A$ for all $A$ in a single Turing degree.

  By \hyperref[prop:generic-quasimin]{Proposition~\ref{prop:generic-quasimin}}, it is enough to build an $A$-generic set $G_A$ to obtain the desired strong quasiminimal cover
  $M_A=A\oplus \overline{A} \oplus G_A$. Let $\{A_k\}_{k\in \bb{N}}$ be a listing of all the elements in $\deg_T(A)$. We construct
  $G=\bigoplus_{k\in \bb{N}} G_{A_k}$ by initial segments $G_s=\bigoplus G_{A_k,s}$. Since the construction requires us to move between Turing reductions and enumeration
  reductions, we fix the following notation for the remaining of the proof: the $n$th Turing operator is denoted by $\Phi _n$ and the $n$th c.e.\ set by
  $W_n$; when using Turing operators and c.e.\ sets, oracles are considered as total. The $n$th enumeration operator (while technically the same as
  the $n$th c.e.\ set) is denoted by $\Gamma _n$.

  We will ensure that the following requirements are satisfied for all $e$, $i$, and $j$ in $\bb{N}$ and every $X\in\deg_T(A)$:
  \begin{align*}
    \mathcal{P}^X_{e,i,j}: & \qquad \left[X=\Phi _i(\Phi_j(X))\; \land\; X\neq \Phi _j(X)\right]\;\;\rightarrow\;\; \left[\Gamma _{e}\left(X\oplus \overline{X} \oplus G_X\right)\neq G_{\Phi _j(X)}\right]\\
    \mathcal{Q}^X_e: & \qquad \exists\sigma \prec G_X\;\left[ \sigma \in W_e^X \;\;\lor\;\; \forall \tau\succ\sigma \left( \tau \notin W_e^X\right)\right]
  \end{align*}

  The $\ca{P}$ requirements ensure that $M_X\nequiv_e M_Y$ whenever $X\neq Y$, and the $\ca{Q}$ requirements ensure that $G_X$ is $X$-generic. The basic strategies to
  satisfy each single requirement are standard. For a $\ca{P}^{X}$ requirement, take the least fresh number $n$ and search for a finite binary sequence
  $H$ extending the current initial segment of $G_X$ such that $n \in \Gamma_e\left(X\oplus \overline{X} \oplus H\right)$. If it exists, extend $G_X$ to
  $H$ and add $n$ to the complement of $G_{\Phi_j(X)}$; otherwise, add $n$ to $G_{\Phi_j(X)}$. For a $\ca{Q}^{X}$ requirement, ask if there is an extension
  $K$ of $G_X$ that meets $W_e^X$ and set $G_X=K$ if it does; otherwise, do nothing.

  Note that to satisfy a $\ca{Q}$ requirement for every $X\in\deg_T(A)$ at the same time, we will add infinitely many elements to $G$, but only finitely many
  at each column. There is, however, a problem that arises from attempting to carry out the construction simultaneously for all $X \in \deg_T(A)$. If
  $X\equiv_T Z$ via $\langle j,i\rangle$, both the requirements $\ca{P}^Z_{e,i,j}$ and $\ca{P}^X_{e,i,j}$ might try to modify $G_X$ simultaneously in incompatible ways. More
  generally, a conflict may arise between the $\ca{P}^X_{e,i,j}$ strategy and the $\ca{P}^Z_{e,i,j}$ strategy when $Z$ is obtained from a repeated application
  of $\Phi_{j}$ to $X$, or when $X$ is obtained from a repeated application of $\Phi_{i}$ to $Z$. We solve this problem by establishing, for each pair
  $\langle i, j\rangle$, a well-founded partial order $\prec_{i,j}$ on $\deg_T(A)$ that ranks the requirements by priority. Namely,
  $\ca{P}^{X}_{e,i,j}$ has higher priority than $\ca{P}^{Z}_{e,i,j}$ if $X\prec_{i,j}Z$. While $\prec_{i,j}$ will not be a linear order, it will resolve the situations
  when a conflict could happen.

  To define $\prec_{i,j}$, we first consider the directed graph $\mathcal{G}_{i,j}$ on $\deg_T(A)$ where an edge goes from $X$ to $Y$ if
  $\Phi _i(X)=Y$ and $\Phi _j(Y)=X$. The in-degree and out-degree of each set $X$ is at most 1, so each connected component is a single vertex, a finite
  path, a finite cycle, or has order type $\omega $, $\omega ^{*}$, or $\omega^{*}+\omega $ (see \hyperref[fig:comp]{Fig.~\ref{fig:comp}}). Sets on different components will be incomparable under
  $\preceq_{i,j}$; this is not a problem because the strategies for the requirements $\ca{P}^{X}_{e,i,j}$ and $\ca{P}^{Z}_{e,i,j}$ do not conflict in this case. We
  define the order on each connected component $\mathfrak{C}$ depending on its type:

  \begin{figure}[h]
    \centering \scalebox{0.8}{
      \begin{tikzpicture}[ >=Latex, node distance=5cm, every node/.style={ circle, draw, thick, inner sep=2pt}, scale=0.5]
        \begin{scope}[yshift=-3cm, xshift=-8cm]
          \node (N1) at ({30}:9cm) {$
            \begin{aligned}
              A\oplus B \oplus C\\
              \phantom{D}\oplus D\oplus E
            \end{aligned}$
          };
          \node (N2) at ({90}:5cm){$
            \begin{aligned}
              E\oplus A \oplus B \\
              \oplus C\oplus D
            \end{aligned}
            $}; \node (N3) at ({150}:8.5cm){$
            \begin{aligned}
              D\oplus E \oplus A \\
              \oplus B\oplus C
            \end{aligned}
            $}; \node (N4) at ({200}:8cm){$
            \begin{aligned}
              C\oplus D \oplus E \\
              \oplus A\oplus B
            \end{aligned}
            $}; \node (N5) at ({340}:8cm){$
            \begin{aligned}
              B \oplus C \oplus D \\
              \oplus E\oplus A
            \end{aligned}$
          };
          \node[draw=none, align=center, inner sep=0pt, shape=rectangle] (C) at (0.3,0.5) {$
            \displaystyle{
              \Phi_i(n)=
              \begin{cases}
                5j+k+1 & \mbox{if } n=5j+k \mbox{ with }k<4 \\
                5j     & \mbox{if } n=5j+4           \\
              \end{cases}
            }$ \\
            \ \\
            $\Phi_j=\Phi_i^{-1}$ }; \draw[->,thick] (N1) to[bend right=20] (N2); \draw[->,thick] (N2) to[bend right=20] (N3); \draw[->,thick] (N3) to[bend
          right=20] (N4); \draw[->,thick] (N4) to[bend right=10] (N5); \draw[->,thick] (N5) to[bend right=20] (N1);
        \end{scope}

      \begin{scope}[local bounding box=case2, start chain, node distance=7mm, xshift=-12cm, yshift=10cm]
        \node[draw=none,inner sep=0pt, shape=rectangle] (C) at (0,-3) { $\displaystyle{
            \begin{aligned}
              \Phi_i(n) & = 2n \\
              \Phi_j(m) &=
                       \begin{cases}
                         0 & \mbox{if }m\mbox{ is odd}\\
                         m/2 & \mbox{if }m \mbox{ is even}
                       \end{cases}
            \end{aligned}
          }$ }; \node [on chain, join={by ->, thick, bend left=20}] at (-5,0){$\mathbb{N}$}; \node [on chain, join={by ->, thick, bend
          left=20}]{$2\mathbb{N}$}; \node [on chain, join={by ->, thick, bend left=20}]{$4\mathbb{N}$}; \node [on chain, join={by ->, thick, bend
          left=20}]{$8\mathbb{N}$}; \node [on chain, join={by ->, thick, bend left=20}, draw=none, fill=none] {$\cdots$};
      \end{scope}

      \begin{scope}[local bounding box=case3, yshift=8cm, start chain=going left, node distance=7mm]
        \node[on chain,draw=none, align=right, inner sep=0pt, shape=rectangle] (E) at (7,0) { $\displaystyle{
            \begin{aligned}
              \Phi_i(n) & =
                       \begin{cases}
                         n+2 & \mbox{if }n \mbox{ is even}\\
                         n-2 & \mbox{if }n>1\mbox{ is odd}\\
                         0 & \mbox{if }n=1
                       \end{cases} \\
              \Phi_j(m)&= \Phi_i^{-1}(m)
            \end{aligned}
          }$ }; \node [on chain=going {at=(\tikzchainprevious),shift=(165:1.5)},draw=none, fill=none, xshift=-0.7cm] {$\cdots$}; \node [on chain, continue
        chain=going below, join={by ->, thick,bend right=20}] {$\{5\}$}; \node [on chain=going {at=(\tikzchainprevious),shift=(320:1)}, join={by
          ->,thick,bend right=20}] {$\{3\}$}; \node [on chain=going {at=(\tikzchainprevious),shift=(340:1)}, join={by ->, thick,bend right=20}]
        {$\{1\}$}; \node [on chain=going {at=(\tikzchainprevious),shift=(280:1)}, join={by ->, thick, bend right=10}] {$\{0\}$}; \node [on chain, join={by
          ->, thick,bend right=20}] {$\{2\}$}; \node [on chain, join={by ->, thick,bend right=20}, draw=none, fill=none] {$\cdots$};
      \end{scope}
    \end{tikzpicture}}
  \caption{Examples of connected components of three different graphs $\ca{G}_{i,j}$ on the degree of computable sets.}
  \label{fig:comp}
\end{figure}
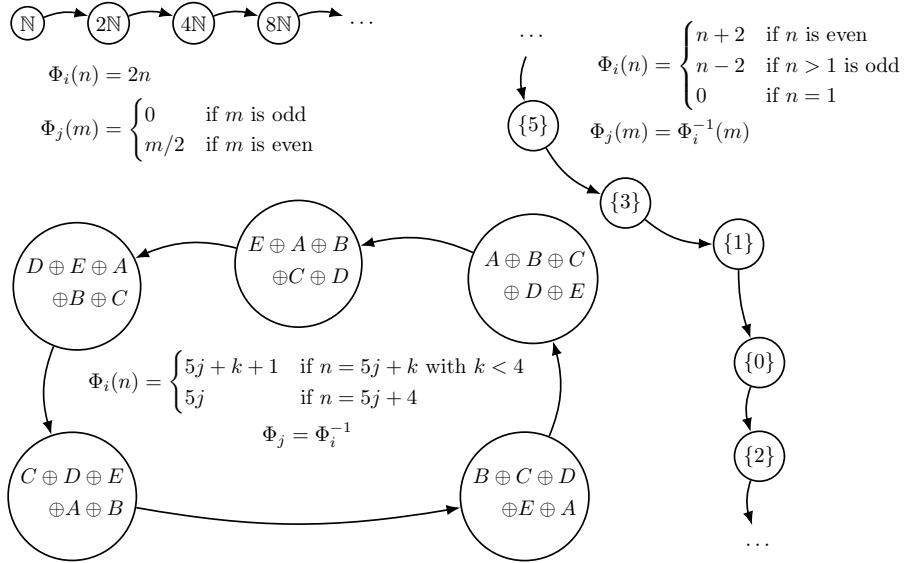
  
  \begin{case}
    If there is some $Y \in \mathfrak{C}$ with in-degree 0 (i.e. if $\mathfrak{C}$ is a finite path or has type $\omega$), then for
    $X,Z\in \mathfrak{C}$ we define $X\prec_{i,j}Z$ if there is a directed path from $X$ to $Z$. In other words, the priority ordering follows the direction of the
    graph.
  \end{case}

  \begin{case}
    If every set in $\mathfrak{C}$ has in-degree 1 but there is $Y \in \mathfrak{C}$ with out-degree 0, then the priority ordering is opposite to the direction of the graph.
  \end{case}

  \begin{case}
    If every set in $\mathfrak{C}$ has in-degree 1 and out-degree 1 ($\mathfrak{C}$ is a cycle or has type $\omega ^{*}+\omega $), let $n$ be the least number with
    $n\in X\setminus Y $ for some $X,Y \in \mathfrak{C}$. Call a set $X$ a \emph{breaking point} of $\mathfrak{C}$ if $n \in X\setminus \Phi_i(X)$. Note that
    $\mathfrak{C}$ necessarily has a breaking point and that breaking points cannot be adjacent in the graph. Since each $\ca{P}^X_{e,i,j}$ requirement only modifies
    $G_X$ and $G_{\Phi_j(X)}$, we can satisfy them for all the breaking points simultaneously without conflict. So, we make the breaking points minimal
    and incomparable in $\prec_{i,j}$. Now, we can simplify the situation to one of the previous case by deleting all the edges that end on a breaking
    point.  Note that every new connected component of $\mathfrak{C}$ belongs to one of the previous cases, so we can define $\prec_{i,j}$ on them accordingly.
  \end{case}

  So, $\prec_{i,j}$ is defined on $\deg_T(A)$, it is a well-founded partial order with height at most $\omega$, and the $\ca{P}$ strategies will not conflict if
  applied sequentially by rank.

  With everything set in place the construction and verification is now straightforward. Start by setting $G_{0}=\varnothing $. At stage
  $s=\langle e, i, j\rangle+1 $, assume that for all $\langle e',i',j' \rangle < \langle e,i,j\rangle$ and all $X\in \deg_T(A)$, the requirements
  $\ca{P}^X_{e',i',j'}$ and $\ca{Q}^X_{e',i',j'}$ have been satisfied. Now, we want to satisfy the requirements $\ca{P}^X_{e,i,j}$ and
  $\ca{Q}^X_{e,i,j}$ for each $X\in\deg_{T}(A)$. We can order $\deg_{T}(A)=\{A_k\mid k\in\bb{N}\}$ by $A_m<A_n$ if $A_{m}$ has strictly lower
  $\prec_{i,j}$-rank than $A_n$, or they have the same $\prec_{i,j}$-rank but $m<n$. Suppose $\ca{P}^X_{e,i,j}$ and $\ca{Q}^X_{e,i,j}$ have been satisfied for all
  $X<A$. Apply the $\ca{P}$-strategy described above to satisfy the $\ca{P}^X_{e,i,j}$ requirement and then the $\ca{Q}$-strategy to satisfy the
  $\ca{Q}^X_{e,i,j}$ requirement.

  A simple inductive argument shows that the construction meets every requirement. So, the function $g(A,C)=M_{A\oplus C}$ is well-defined and satisfies
  the two conditions of the theorem. Now, to show that $g$ is arithmetical, we need to examine the complexity of the construction. For a given $X$, we
  need to be able to compute the set $I=\{i\mid\Phi _i(X)\geq_T X\}$. In general, this can be accomplished consulting the oracle $X^{(3)}$. Then, the listing
  $\{A_k\}_{k\in\bb{N}}$ of $\deg_T(A)$ that we fixed at the beginning of the proof can be taken to be an increasing enumeration of $I$. The same oracle can
  determine whether two sets $Y$ and $Z$ are in the same connected component of $\ca{G}_{i,j}$ (uniformly on $\langle i,j\rangle$). However, we need an extra jump to
  decide the type of each of the connected components and compute $\prec_{i,j}$. Satisfying the requirements only takes one jump, so the whole
  construction of $M_X$ can be carried out with oracle $X^{(4)}$. Thus, $g(A,C)\leq_T (A\oplus C)^{(4)}$.
\end{proof}

The previous theorem gives us a partition of $\ca{D}_e$ into continuum-many cofinal pieces. Namely, the partition whose elements are the sets
$\ca{C}(A)$ and an extra part with all the degrees not in any $\ca{C}(A)$. Since the partition is arithmetical, it allows us to define a Borel
$e$-invariant function as a combination of continuum many functions with different behaviours; this is made precise in the following corollary.

\begin{corollary}\label{cor:jgfunctions}
  For every Borel function $F\colon\ca{P}(\bb{N})\to\ca{P}(\bb{N})$, there is a Borel uniformly $e$-invariant function
  $f\colon\ca{P}(\bb{N})\to\ca{P}(\bb{N})$ such that, for each $A\subseteq\bb{N}$, $\{\deg_e(X)\mid f(X)=F(A)\}$ is cofinal.
\end{corollary}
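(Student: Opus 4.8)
The plan is to let $f$ be the locally constant function that outputs $F(A)$ on the piece $\mathcal{C}(A)$ of the partition of $\mathcal{D}_e$ produced by \autoref{thm:continuumcofinal}, and $\varnothing$ elsewhere. Recall from the proof of that theorem that $g(A,C)=M_{A\oplus C}$, where $M_Z=Z\oplus\overline{Z}\oplus G_Z$ is a strong quasiminimal cover of $\deg_e(Z\oplus\overline{Z})$ whose generic part $G_Z$ is obtained from $Z$ by a construction that, for a given $Z$, runs uniformly in $Z^{(4)}$, and that $M_Z\equiv_e M_W$ forces $Z=W$ by \autoref{thm:continuumcofinal}(b). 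Given $X\subseteq\mathbb{N}$, call an index $e$ \emph{suitable for $X$} if $\Phi_e(X)$ is total and $X\equiv_e M_{\Phi_e(X)}$; if some index is suitable for $X$, take the least such $e$, write $\Phi_e(X)=A\oplus C$, and set $f(X)=F(A)$, and otherwise set $f(X)=\varnothing$. The key point is that the search succeeds precisely when $\deg_e(X)\in\mathcal{C}(A)$ for some $A$: if $X\equiv_e M_{A\oplus C}$ then the total set $(A\oplus C)\oplus\overline{A\oplus C}$ is enumeration-reducible to $M_{A\oplus C}\equiv_e X$, hence Turing-reducible to $X$ (enumerate $X$ and run the two $e$-reductions, to the set and to its complement, in parallel), so $A\oplus C=\Phi_e(X)$ for some suitable $e$; conversely, any suitable $e$ gives $M_{\Phi_e(X)}\equiv_e X$, so by rigidity two suitable indices always produce the same set $\Phi_e(X)$.

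First I would verify that $f$ is well defined, $e$-invariant, and uniformly $e$-invariant. Well-definedness is the last sentence above. If $X\equiv_e Y$ they have the same degree, hence (by disjointness of the $\mathcal{C}(A)$) lie in the same piece or in none, and in the first case the two searches return the same $A$; so $f(X)=f(Y)$, i.e. $f$ is constant on each enumeration degree. Consequently $f$ is uniformly $e$-invariant with a constant uniformity function: take the value $\langle e_{\mathrm{id}},e_{\mathrm{id}}\rangle$, where $\Gamma_{e_{\mathrm{id}}}=\{\langle n,\{n\}\rangle\mid n\in\mathbb{N}\}$ is the identity $e$-operator, which witnesses $f(X)\equiv_e f(Y)$ whenever $f(X)=f(Y)$.

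For cofinality, fix $A$. Every $X$ with $\deg_e(X)\in\mathcal{C}(A)$ has $X\equiv_e M_{A\oplus C}$ for some $C$, so the search recovers $A\oplus C$ and $f(X)=F(A)$; hence $\{\deg_e(X)\mid f(X)=F(A)\}\supseteq\mathcal{C}(A)$, which is cofinal in $\mathcal{D}_e$ by \autoref{thm:continuumcofinal}(a). (By part (b) no degree of $\mathcal{C}(A)$ receives any other value, though the statement does not need this.)

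The delicate point, and the main obstacle, is that $f$ is Borel. Here one unwinds the definition into predicates that are arithmetical in $X$ uniformly in $e$. For a fixed $e$, ``$\Phi_e(X)$ is total'' is $\Pi^0_2$ in $X$, and when it holds the set $P=\Phi_e(X)$ has characteristic function arithmetical in $X$ uniformly; from the proof of \autoref{thm:continuumcofinal}, $M_P$ has characteristic function computable from $P^{(4)}$ uniformly; and ``$X\equiv_e M_P$'' unwinds, via ``$D\subseteq M_P$ is decidable in $P^{(4)}$'' and ``$n\in X$ is clopen'', to a predicate arithmetical in $P^{(4)}$, hence arithmetical in $X$. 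Chaining these bounds, ``$e$ is suitable for $X$'' is arithmetical in $X$ uniformly in $e$, so ``$f(X)\neq\varnothing$'' is Borel, the least suitable index is a Borel function of $X$ on that set, and therefore so is $X\mapsto A$. Composing with the Borel function $F$ (and with the constant $\varnothing$ off that set) shows $f$ is Borel. I expect the only real work here to be assembling the uniform arithmetical bounds carried over from the construction in \autoref{thm:continuumcofinal}.
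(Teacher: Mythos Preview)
Your proposal is correct and follows essentially the same approach as the paper: define $f$ to be locally constant along the partition $\{\mathcal{C}(A)\}$, observe that local constancy yields uniform $e$-invariance via the identity operator, and verify Borelness by recovering the unique $A\oplus C$ with $X\equiv_e M_{A\oplus C}$ using the arithmetical bound $M_P\leq_T P^{(4)}$ from \autoref{thm:continuumcofinal}. The only cosmetic difference is that the paper first filters by checking whether $\deg_e(X)$ is a strong quasiminimal cover of a total degree (a $\Sigma^0_5(X)$ condition) before searching for the witness $Y$, whereas you search directly for a Turing index $e$ with $\Phi_e(X)$ total and $X\equiv_e M_{\Phi_e(X)}$; your route is slightly more streamlined and avoids the extra quasiminimal-cover analysis.
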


\begin{proof}
  Define
  \[f(X)=\begin{cases} F(A) & \mbox{if } \deg_e(X)\in \ca{C}(A) \\ \varnothing & \mbox{otherwise.}\end{cases}\] The function $f$ is uniformly
  $e$-invariant for the simple reason that it is locally constant; that is, for all $X$, $f\restriction_{\deg(X)}$ is constant. So, if $\Gamma_e$ is an enumeration
  operator such that $\Gamma _e(Z)=Z$ for all $Z\subseteq \bb{N}$, then the mapping $\langle i,j\rangle\mapsto \langle e,e\rangle $ is a uniformity function for $f$.

  To see that $f$ is Borel, observe that for $X\subseteq \bb{N}$, we can decide if there is some $A$ such that $\deg_e(X)\in \ca{C}(A)$ by first checking if
  $\deg_{e}(X)$ is a strong quasiminimal cover of some total degree $\tdeg{a}$ and, in case it is, verifying if $X\equiv_eM_{Y}$ for some
  $Y\oplus \overline{Y}\in \tdeg{a}$. Checking if $\deg_{e}(X)$ is a strong quasiminimal cover of some total degree $\tdeg{a}$ is $\Sigma ^0_5(X)$: we need to check that
  $X$ does not have total degree---which is $\Sigma^0_3(X)$---and find if there is some total set $Y\oplus\overline{Y}\leq_{e}X$ such that for any enumeration operator
  $\Gamma$ with $\Gamma (X)$ total, we have that $\Gamma(X)\leq_eY\oplus\overline{Y}$. Writing down the statement we get a natural $\Sigma ^0_5$ definition:
  \[\underbrace{\exists e\;\left( \underbrace{\Gamma_e(X)\mbox{ is total}}_{\Pi^0_2(X)} \;\land\;\underbrace{ \forall i\;(\underbrace{\Gamma_i(X)\mbox{ is not total}}_{\Sigma^0_2(X)}\lor
          \underbrace{ \exists j\;(\Gamma_j(\Gamma_e(X))=\Gamma_i(X))}_{\Sigma^0_3(X)})}_{\Pi_0^4(X)}\right)}_{\Sigma^0_5(X)}.\]
  If $X$ is not a strong quasiminimal cover, then $f(X)=\varnothing$. Otherwise, we have to check if $X=M_Y$ for some set $Y\oplus \overline{Y}$ in the degree that
  $X$ is a strong quasiminimal cover of. In the proof of \hyperref[thm:continuumcofinal]{Theorem~\ref{thm:continuumcofinal}}, we saw that $Y^{(4)}$ computes
  $\bigoplus_{Z\in\deg_T(Y)} M_Z$, so $X^{(4)}$ also computes it. Thus, $X^{(5)}$ can answer if there is a $Y$ such that $X=M_Y$ and, if the answer is ``yes'',
  compute $A=\{n\mid 2n\in Y\}$ such that $X\in\ca{C}(A)$; in other words, $X^{(5)}$ can decide in which case of the definition of $f$ we are, so $f$ is Borel.
\end{proof}

\hyperref[skipprop]{Corollary~\ref{cor:jgfunctions}} yields a wider range of behaviours class of $e$-invariant functions than what is possible in the Turing case. For example, Slaman and
Steel \cite{slamanDefinableFunctionsDegrees1988} proved that if a $T$-invariant function is $\emph{regressive}$ (i.e., $f(X)<_TX$) on a cone, then it is constant on a cone. However, this is not the
case for $e$-invariant functions. In fact, the preorder $\leq_e^{\cone}$ is far from a prewellorder.

\begin{corollary}\label{cor:regressive}
  There is a Borel uniformly $e$-invariant function $f \colon \ca{P}(\bb{N})\to\ca{P}(\bb{N})$ such that for all $X,Y\subseteq\bb{N}$ there is
  $Z\subseteq\bb{N}$ such that $X\leq_eZ$ and $f(Z)=Y$ and for every non-c.e.\ set $A$, $f(A)<_{e} A$. In particular, $f$ is not constant on a cone.
\end{corollary}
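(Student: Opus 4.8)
The plan is to derive this directly from \hyperref[cor:jgfunctions]{Corollary~\ref{cor:jgfunctions}}, applied to the simplest possible choice of target, namely the identity function $F=\operatorname{id}$, which is certainly Borel. This produces a Borel uniformly $e$-invariant $f\colon\mathcal{P}(\mathbb{N})\to\mathcal{P}(\mathbb{N})$ with $f(X)=A$ whenever $\deg_e(X)\in\mathcal{C}(A)$ and $f(X)=\varnothing$ otherwise; this is well-defined because the classes $\mathcal{C}(A)$ are pairwise disjoint by \hyperref[thm:continuumcofinal]{Theorem~\ref{thm:continuumcofinal}}. It then only remains to verify the two asserted properties, since Borelness and uniform $e$-invariance are inherited from \hyperref[cor:jgfunctions]{Corollary~\ref{cor:jgfunctions}}.

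For the first property, I would fix $X,Y\subseteq\mathbb{N}$ and recall from the proof of \hyperref[thm:continuumcofinal]{Theorem~\ref{thm:continuumcofinal}} that $\deg_e(M_{Y\oplus X})\in\mathcal{C}(Y)$ and that $X\leq_e (Y\oplus X)\oplus\overline{Y\oplus X}<_e M_{Y\oplus X}$. Setting $Z=M_{Y\oplus X}$ then gives $X\leq_e Z$ and, since $\deg_e(Z)\in\mathcal{C}(Y)$, also $f(Z)=F(Y)=Y$. This already yields that $f$ is not constant on any cone: given a base set $W$, choose $Y_0\not\equiv_e Y_1$ and produce $Z_0,Z_1\geq_e W$ with $f(Z_i)=Y_i$, so $f$ takes two inequivalent values in the cone above $W$.

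For the regressive property, let $A$ be non-c.e. If $\deg_e(A)\notin\mathcal{C}(B)$ for every $B$, then $f(A)=\varnothing$, and since $A$ is non-c.e. this gives $f(A)=\varnothing<_e A$. Otherwise $A\equiv_e M_{B\oplus C}$ for the unique $B$ with $\deg_e(A)\in\mathcal{C}(B)$ and some $C$, so $f(A)=B$. Here $B\leq_e(B\oplus C)\oplus\overline{B\oplus C}$, and because $M_{B\oplus C}$ is a strong quasiminimal cover of the total degree of $(B\oplus C)\oplus\overline{B\oplus C}$, we have $(B\oplus C)\oplus\overline{B\oplus C}<_e M_{B\oplus C}\equiv_e A$; hence $B<_e A$, i.e.\ $f(A)<_e A$.

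The only genuinely delicate point is this last two-case split: since $f$ is forced to take the value $\varnothing$ on every degree outside the $\mathcal{C}(B)$'s, one cannot simply assert that $f(A)$ is always of the form $B<_e A$, and one must separately dispatch the $\varnothing$ case using that $A$ is non-c.e. Everything else is bookkeeping, and strictness $B<_e A$ in the nontrivial case is immediate from the strong quasiminimality of $M_{B\oplus C}$ over the total degree it covers.
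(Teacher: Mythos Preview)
Your proposal is correct and takes essentially the same approach as the paper: apply \hyperref[cor:jgfunctions]{Corollary~\ref{cor:jgfunctions}} with $F=\operatorname{id}$, then use the strong quasiminimal cover property of $M_{B\oplus C}$ to get strict regressiveness. The paper's proof is terser---it does not spell out the first property or the two-case split for the $\varnothing$ output---but your more careful treatment of these points is entirely in line with the intended argument.
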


\begin{proof}
  Take $F$ to be the identity in \hyperref[cor:jgfunctions]{Corollary~\ref{cor:jgfunctions}}. If $X\in\ca{C}(A)$ for some $A$, there is a set $C$ such that
  $X=M_{A\oplus C}$; thus, $f(X)=A<_eM_{A\oplus C}=X$. Otherwise, $f(X)=\varnothing$.
\end{proof}

\begin{corollary}\label{cor:edegemb}
  Enumeration reducibility embeds as a preorder into the set of Borel nonconstant functions preordered by $\leq_e^{\cone}$. In particular, Turing
  reducibility embeds into $\leq_e^{\cone}$.
\end{corollary}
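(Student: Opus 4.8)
The plan is to represent each set $S\subseteq\mathbb{N}$ by one of the locally constant functions produced in Corollary~\ref{cor:jgfunctions}, set up so that the partition $\{\mathcal{C}(A)\}_{A\subseteq\mathbb{N}}$ from Theorem~\ref{thm:continuumcofinal} records exactly which set was used. For $S\subseteq\mathbb{N}$, take the continuous (hence Borel) function $F_S\colon A\mapsto A\oplus S$ and let $f_S$ be the function obtained from $F_S$ as in the proof of Corollary~\ref{cor:jgfunctions}, namely
\[ f_S(X)=\begin{cases} A\oplus S & \text{if } \deg_e(X)\in\mathcal{C}(A),\\ \varnothing & \text{otherwise.}\end{cases} \]
This is well defined by part~(b) of Theorem~\ref{thm:continuumcofinal}, and, being an instance of the construction in Corollary~\ref{cor:jgfunctions}, $f_S$ is Borel and uniformly $e$-invariant (it is locally constant, with the same uniformity function as there). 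The target is to show that $S\mapsto f_S$ is an order embedding of $(\mathcal{P}(\mathbb{N}),\leq_e)$ into the Borel nonconstant functions preordered by $\leq_e^{\cone}$; the final sentence of the statement then follows by precomposing with $A\mapsto A\oplus\overline{A}$ and using $A\leq_T B\iff A\oplus\overline{A}\leq_e B\oplus\overline{B}$.

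First I would check that each $f_S$ is constant on no cone. By the construction in the proof of Theorem~\ref{thm:continuumcofinal}, every degree in $\mathcal{C}(A)$ is a strong quasiminimal cover of a total degree, and no strong quasiminimal cover of a total degree is total (such a cover would lie below the total degree it covers); hence no total degree lies in any $\mathcal{C}(A)$, so $f_S$ is $\varnothing$ on the cofinal class of total degrees. On the other hand, for any non-c.e.\ set $A_0$ the class $\mathcal{C}(A_0)$ is cofinal, and there $f_S\equiv_e A_0\oplus S\nequiv_e\varnothing$ (since $A_0\leq_e A_0\oplus S$ while $A_0$ is not c.e.). So above any $W$ there are degrees on which $f_S$ takes $e$-inequivalent values.

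The core is the equivalence $S\leq_e T\iff f_S\leq_e^{\cone} f_T$. For the forward direction, observe that $f_S(X)\leq_e f_T(X)$ for \emph{every} $X$: if $\deg_e(X)\in\mathcal{C}(A)$ then $A\oplus S\leq_e A\oplus T$ because $S\leq_e T$, and otherwise both values are $\varnothing$; in particular $f_S\leq_e^{\cone} f_T$ with empty base. For the converse, suppose $W$ witnesses $f_S\leq_e^{\cone} f_T$. Since $\mathcal{C}(\varnothing)$ is cofinal by part~(a) of Theorem~\ref{thm:continuumcofinal}, pick $X\geq_e W$ with $\deg_e(X)\in\mathcal{C}(\varnothing)$; then
\[ S\equiv_e\varnothing\oplus S=f_S(X)\leq_e f_T(X)=\varnothing\oplus T\equiv_e T, \]
so $S\leq_e T$. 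Combined with the previous paragraph, $S\mapsto f_S$ is an order embedding of the two preorders.

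There is no single hard step; the point that needs care is that the comparison $f_S\leq_e^{\cone}f_T$ must be read off a common partition of $\mathcal{D}_e$, which is why $f_S$ and $f_T$ should be taken as instances of the \emph{specific} function in the proof of Corollary~\ref{cor:jgfunctions} rather than via two independent applications of its statement (which need not produce aligned functions). Borelness, uniform invariance, and the arithmetic bookkeeping are inherited unchanged from Theorem~\ref{thm:continuumcofinal} and Corollary~\ref{cor:jgfunctions}.
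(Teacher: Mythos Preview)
Your proof is correct and uses essentially the same engine as the paper's: the cofinal partition $\{\mathcal{C}(A)\}$ from Theorem~\ref{thm:continuumcofinal} together with locally constant functions. The execution differs, however. The paper's proof is more minimal: it fixes the single cofinal class $\mathcal{C}(\varnothing)$ and, for each $A$, sets $f_A(X)=A$ on $\mathcal{C}(\varnothing)$ and $f_A(X)=\varnothing$ elsewhere, so that $f_A\leq_e^{\cone}f_B$ reduces immediately to $A\leq_e B$. Your route instead invokes the full template of Corollary~\ref{cor:jgfunctions} with $F_S(A)=A\oplus S$, using every piece $\mathcal{C}(A)$ of the partition, and then reads off $S\leq_e T$ by evaluating on $\mathcal{C}(\varnothing)$.

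The paper's version is shorter and avoids the detour through $A\oplus S$, but your version has one genuine advantage: it secures nonconstancy of $f_S$ for \emph{every} $S$, including c.e.\ $S$, since the values $A\oplus S$ vary in $e$-degree with $A$. In the paper's construction, $f_A$ is constant (up to $e$-equivalence) when $A$ is c.e., so strictly speaking those $f_A$ do not land in the class of nonconstant functions. Your extra care with the nonconstancy argument (total degrees miss every $\mathcal{C}(A)$, while $\mathcal{C}(A_0)$ for non-c.e.\ $A_0$ forces a non-c.e.\ value) is well placed and closes that small gap.
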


\begin{proof}
  For each $A\subseteq \bb{N}$, define
  \[f_A(X)=\begin{cases} A & \mbox{if } \deg_e(X)\in \ca{C}(\varnothing) \\ \varnothing & \mbox{otherwise.}\end{cases}\] We have that $A\leq_{e}B$ if and only if $f_{A}\leq^{\cone}_e f_B$.
\end{proof}

\begin{corollary}
  The preorder $\leq_e^{\cone}$ is not locally countable, even on uniformly $e$-invariant functions. I.e., there is a uniformly $e$-invariant function
  $f$ such that $\{g\mid g\leq_e^{\cone}f\}$ is uncountable.
\end{corollary}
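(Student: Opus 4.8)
The plan is to bound, under $\leq_e^{\cone}$, a family of size continuum of pairwise $\leq_e^{\cone}$-inequivalent functions by a single uniformly $e$-invariant function, exploiting the continuum-many pairwise disjoint cofinal classes $\mathcal{C}(A)$ of Theorem~\ref{thm:continuumcofinal}. Take $f$ to be the uniformly $e$-invariant function of Corollary~\ref{cor:regressive}, i.e.\ the function obtained from Corollary~\ref{cor:jgfunctions} with $F$ the identity:
\[f(X)=\begin{cases}A & \mbox{if }\deg_e(X)\in\mathcal{C}(A);\\ \varnothing & \mbox{otherwise,}\end{cases}\]
where $A$ is unambiguous when it exists because the $\mathcal{C}(A)$ are pairwise disjoint. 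For each $A\subseteq\mathbb{N}$ define
\[g_A(X)=\begin{cases}A & \mbox{if }\deg_e(X)\in\mathcal{C}(A);\\ \varnothing & \mbox{otherwise.}\end{cases}\]
As in the proof of Corollary~\ref{cor:jgfunctions}, each $g_A$ is locally constant, hence uniformly $e$-invariant, so all the $g_A$ belong to the collection over which we are counting; this will in fact give the stronger statement that there are uncountably many \emph{uniformly} $e$-invariant functions below $f$.

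Next I would check that $g_A\leq_e^{\cone}f$ for every $A$, indeed pointwise: if $\deg_e(X)\in\mathcal{C}(A)$ then $g_A(X)=A=f(X)$; if $\deg_e(X)\in\mathcal{C}(B)$ with $B\neq A$ then $g_A(X)=\varnothing\leq_e B=f(X)$; otherwise $g_A(X)=\varnothing=f(X)$. Hence every $g_A$ lies in $\{g\mid g\leq_e^{\cone}f\}$, and it remains to see that distinct $g_A$'s represent distinct $\leq_e^{\cone}$-classes. The key claim is that if $A$ is not c.e.\ and $B\neq A$, then $g_A\nleq_e^{\cone}g_B$. To prove it, fix any cone, say the cone above $D$. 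By part (a) of Theorem~\ref{thm:continuumcofinal} the class $\mathcal{C}(A)$ is cofinal, so there is $X\geq_e D$ with $\deg_e(X)\in\mathcal{C}(A)$; then $g_A(X)=A$, while part (b) gives $\deg_e(X)\notin\mathcal{C}(B)$, so $g_B(X)=\varnothing$. Since $A$ is not c.e.\ we have $A\nleq_e\varnothing$, so $g_A(X)\nleq_e g_B(X)$; as the cone was arbitrary, $g_A\nleq_e^{\cone}g_B$.

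From the claim, for non-c.e.\ $A\neq B$ we get $g_A\nequiv_e^{\cone}g_B$, so $A\mapsto[g_A]_{\equiv_e^{\cone}}$ is injective on the non-c.e.\ sets; since there are continuum-many non-c.e.\ sets, $\{g\mid g\leq_e^{\cone}f\}$ contains continuum-many, hence uncountably many, distinct $\leq_e^{\cone}$-classes, all witnessed by uniformly $e$-invariant functions. There is no serious obstacle here: the argument is bookkeeping on the two properties of Theorem~\ref{thm:continuumcofinal} (cofinality of each $\mathcal{C}(A)$ and disjointness of distinct $\mathcal{C}(A)$), and the only point requiring care is the reading of ``not locally countable'', which I take to mean precisely that the functions $\leq_e^{\cone}$-below $f$ split into uncountably many $\leq_e^{\cone}$-equivalence classes.
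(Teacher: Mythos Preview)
Your proof is correct and follows the same construction as the paper: the same $f$ (the regressive function of Corollary~\ref{cor:regressive}) and the same family $\{g_A\}_{A\subseteq\mathbb{N}}$. The paper's proof is terse and simply asserts that each $g_A$ is uniformly $e$-invariant with $g_A\leq_e^{\cone}f$; you additionally verify the pointwise inequality and, more importantly, the pairwise $\leq_e^{\cone}$-inequivalence of the $g_A$ for non-c.e.\ $A$, which is the right thing to check under the natural reading of ``locally countable'' in terms of equivalence classes and which the paper leaves to the reader.
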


\begin{proof}
  Let $f$ be the regressive function from \hyperref[cor:regressive]{Corollary~\ref{cor:regressive}}. Note that for each $A\subseteq\bb{N}$, the function
  \[g_A(X)=\begin{cases} A & \mbox{if } \deg_e(X)\in \ca{C}(A) \\ \varnothing & \mbox{otherwise.}\end{cases}\] is uniformly $e$-invariant and $g_A\leq_e^{\cone}f$.
\end{proof}

\section{Uniform Invariance}
\label{sec:unifinv}

The functions obtained from \hyperref[cor:jgfunctions]{Corollary~\ref{cor:jgfunctions}} are uniformly invariant, but only because they are locally constant. To an extent, locally constant
functions play a mirror of the role of constant functions among the uniformly $T$-invariant functions. For example, the constant functions provide an
embedding of $\mathcal{D}_T$ into $\leq_T^{\cone}$ in a similar fashion to our embedding of $\ca{D}_e$ into $\leq_e^{\cone}$ from \hyperref[cor:edegemb]{Corollary~\ref{cor:edegemb}}. Additionally, it is not obvious
whether the function from \hyperref[example:counterglobal]{Example~\ref{example:counterglobal}} is uniformly $e$-invariant or not. Thus, one might hope that a much nicer structure emerges for
$\leq_e^{\cone}$ if we restrict our attention to uniformly $e$-invariant functions that are not locally constant. We obtain a mixed bag of results through
the analysis of uniformity functions. On the one hand, \hyperref[prop:sameunif]{Proposition~\ref{prop:sameunif}} shows that the patching technique from \hyperref[example:counterglobal]{Example~\ref{example:counterglobal}} can be used to merge a
countable collection of uniformly $e$-invariant functions with pairwise disjoint cofinal domains into a single uniformly $e$-invariant function, which
implies that $\leq_e^{\cone}$ has infinite antichains consisting of locally non-constant uniformly $e$-invariant functions. On the other, we will show in \hyperref[sec:locMCe]{Section~\ref{sec:locMCe}}
that any regressive uniformly $e$-invariant function is locally constant, confirming that some of the pathological behaviour of the previous section
is limited to locally constant functions.

Some of the proofs about uniformly $e$-invariant functions involve a good amount of manipulation of indices. In the interest of clarity, we introduce
some convenient notation.

Remember that $A\leq_e B$ via $i$ if $A=\Gamma _i(B)$ and that $A\equiv_e B$ via $\langle i,j\rangle$ if $A\leq_e B$ via $i$ and $B\leq_eA$ via $j$. If
$f$ is an $e$-invariant function, $u\colon\bb{N}\to\bb{N}$ is a uniformity function for $f$ if whenever $A\equiv_e B$ via $\langle i,j\rangle$, we have that
$f(A)\equiv_e f(B)$ via $u(\langle i,j\rangle)$. We write $u(i,j)$ instead of $u(\langle i,j\rangle)$.

The composition of enumeration operators always produces an enumeration operator. Denote by $i\circ j$ an index of the composition of enumeration
operators $\Gamma_i$ and $\Gamma_j$. Namely, $\Gamma_{i\circ j}(A)=\Gamma_i(\Gamma_j(A)$. Similarly, $e^n$ denotes an index for the composition of
$\Gamma_e$ with itself $n$-times. This notation should not be confused for the usual exponentiation of natural numbers, which will never appear in this
context. We extend the notation to pairs of indices $\langle e,i\rangle$ and $\langle j,k\rangle$ by setting
$\langle e,i\rangle\circ \langle j,k\rangle=\langle j\circ e, i\circ k\rangle$ and $\langle e,i \rangle^n=\langle e^n,i^n\rangle$. Observe that the composition of pairs is defined componentwise, but each component is
composed on a different side. This is done to obtain the following lemma.

\begin{lemma} \label{unifcompo} Let $f$ be uniformly $e$-invariant with uniformity function $u$. If $A\equiv_e B$ via $\langle e,i\rangle$ and
  $B\equiv_e C$ via $\langle j,k \rangle$, then $f(A)\equiv_e f(C)$ via $u( e,i )\circ u( j,k )$.
\end{lemma}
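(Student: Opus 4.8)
The plan is to \emph{not} touch the composite equivalence $A\equiv_e C$ when feeding things to the uniformity function $u$. Since $u$ need not respect composition in any way, knowing the ``via'' pair of $A\equiv_e C$ outright would tell us nothing useful; instead I would run $u$ on each of the two \emph{given} equivalences on its own and then glue the resulting reductions together, which is harmless because enumeration operators compose and composition of operators is composition of indices.

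Concretely: first unpack the hypotheses into explicit operators realizing the reductions inside $A\equiv_e B$ and $B\equiv_e C$ with the displayed indices. Applying the defining property of the uniformity function to the first equivalence gives $f(A)\equiv_e f(B)$ via $u(e,i)$, and to the second gives $f(B)\equiv_e f(C)$ via $u(j,k)$; write $u(e,i)=\langle p,q\rangle$ and $u(j,k)=\langle r,s\rangle$, so that $p,q,r,s$ name the four operators relating $f(A),f(B),f(C)$. Now eliminate $f(B)$: substituting the operator for $f(B)$ in terms of $f(C)$ into the operator for $f(A)$ in terms of $f(B)$ yields an operator for $f(A)$ in terms of $f(C)$ whose index is the composite of the two indices, using $\Gamma_{a\circ b}=\Gamma_a\circ\Gamma_b$; symmetrically one gets an operator for $f(C)$ in terms of $f(A)$. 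Thus $f(A)\equiv_e f(C)$, witnessed by a pair of composite indices.

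It then remains only to recognize that this pair is exactly $u(e,i)\circ u(j,k)$. Chaining the $A$--$B$ witness with the $B$--$C$ witness composes one coordinate with its counterpart on one side and the other coordinate with its counterpart on the other side, because the two coordinates of an equivalence-witness run in opposite directions --- and this is precisely the content of the asymmetric definition $\langle e,i\rangle\circ\langle j,k\rangle=\langle j\circ e,\,i\circ k\rangle$. I expect the only genuine work to be this last bit of direction bookkeeping, namely getting the two composites into the correct slots; and even that is light, since the peculiar definition of $\circ$ on pairs was arranged so that it comes out automatically. So the lemma is essentially an exercise in unwinding definitions.
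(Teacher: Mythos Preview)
Your proposal is correct and is exactly the paper's approach: apply $u$ separately to the two given equivalences to obtain $f(A)\equiv_e f(B)$ via $u(e,i)=\langle e',i'\rangle$ and $f(B)\equiv_e f(C)$ via $u(j,k)=\langle j',k'\rangle$, then compose the operators through $f(B)$ and observe that the resulting pair of indices is $\langle j'\circ e',\, i'\circ k'\rangle = u(e,i)\circ u(j,k)$ by the asymmetric definition of $\circ$ on pairs. Your remark that the only real content is the direction bookkeeping, and that the definition of $\circ$ on pairs was tailored so this comes out automatically, is precisely right.
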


\begin{proof}
  Let $\langle e',i'\rangle=u( e,i )$ and $\langle j',k'\rangle=u( j,k )$. By uniformity, $f(A)\equiv_e f(B)$ via $\langle e',i'\rangle $ and
  $f(B)\equiv_e f(C)$ via $\langle j',k'\rangle $. So, $f(C)=\Gamma_{j'}(f(B))=\Gamma_{j'}(\Gamma_{e'}(f(A)))$. Similarly,
  $f(A)=\Gamma_{i'}(f(B))=\Gamma_{i'}(\Gamma_{k'}(f(C)))$.  So, $f(A)\equiv f(C)\mbox{ via } \langle j'\circ e', i'\circ k'\rangle$. In other words, $f(A)\equiv f(C) \mbox{ via } u( e,i )\circ u( j,k )$.
\end{proof}

Having a uniformity function turns out to be a much stronger property than it initially appears. Bard proved, for uniformly $T$-invariant functions,
that the uniformity function can always be chosen to be computable \cite{bardUniformMartinsConjecture2020}. The following lemma states that the same is true in the enumeration degrees; the
proof follows the idea of Bard's, modifying the coding to only use positive information.

\begin{lemma}\label{compunif}
  Let $X\subseteq \ca{P}(\bb{N})$ be closed under enumeration equivalence. If $f\colon X\to \ca{P}(\bb{N})$ is uniformly $e$-invariant, then $f$ has a computable uniformity function.
\end{lemma}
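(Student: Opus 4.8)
The plan is to start from an arbitrary uniformity function $u$ for $f$ (one exists by hypothesis, but it need not be computable) and to massage it into a computable $u'$, following Bard's coding argument from \cite{bardUniformMartinsConjecture2020} but implementing every reduction by an enumeration operator, so that only positive information about oracles is ever used. The guiding idea is that computing the ``correct'' value of $u$ on a pair $\langle i,j\rangle$ witnessing $A\equiv_e B$ ought to require only \emph{one} application of $u$ to a pair of indices that is fixed once and for all, together with effective bookkeeping.

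Here is the coding. For a number $p=\langle i,j\rangle$ and a set $Z$, let $Z[p]$ be $Z$ placed on column $0$ together with the single number $p$ placed on column $1$, say $Z[p]=\{\langle 0,n\rangle\mid n\in Z\}\cup\{\langle 1,p\rangle\}$; this is visibly enumerable from $Z$ once $p$ is known. Then $Z\equiv_e Z[p]$: the reduction $Z[p]\to Z$ reads column $0$ and has a \emph{fixed} index $\mathsf{str}$, while $Z\to Z[p]$ has an index $\mathsf{code}(p)$ computable from $p$. The crucial point is the following. If $p$ really witnesses $A\equiv_e B$ — that is, $A=\Gamma_i(B)$ and $B=\Gamma_j(A)$ — then $A[p]\equiv_e B[p]$ \emph{via a pair of indices $\langle k_0,l_0\rangle$ that does not depend on $p$}. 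Indeed, the ``universal'' operator that, on an oracle of the form $X[q]$, reads $q$ off column $1$, extracts $j=(q)_1$, simulates $\Gamma_j$ on the content of column $0$, and re-attaches $q$ on column $1$, sends $A[p]$ to $(\Gamma_j(A))[p]=B[p]$; symmetrically for the other direction, using $i=(q)_0$. Each of these is a bona fide enumeration operator — the decoding of $q$ and the simulation of $\Gamma_i,\Gamma_j$ consult only the positive part of the oracle — and their indices $k_0,l_0$ are absolute. Since $X$ is closed under $\equiv_e$, we have $A[p],B[p]\in X$, so $f(A[p])\equiv_e f(B[p])$ via the \emph{constant} pair $u(k_0,l_0)$.

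What remains is to link $f(A)$ to $f(A[p])$, and $f(B)$ to $f(B[p])$, by indices computable in $p$; then \autoref{unifcompo} assembles the chain $f(A)\equiv_e f(A[p])\equiv_e f(B[p])\equiv_e f(B)$ and yields $u'$. The equivalence $A\equiv_e A[p]$ is witnessed by $\langle\mathsf{str},\mathsf{code}(p)\rangle$, with $\mathsf{str}$ fixed and $\mathsf{code}$ computable, so what is wanted is that $u$ restricted to inputs of this particular shape is computable in $p$. This is the real content of the lemma, and it is handled by the recursion theorem: one defines $u'$ so that on an input $\langle\mathsf{str},\mathsf{code}(p)\rangle$ it re-enters the same coding scheme, coding the \emph{original} parameter $p$ again (legitimate, since $p$ determines the witnessing pair $\langle\mathsf{str},\mathsf{code}(p)\rangle$ back); because the parameter never grows, this recursion stabilizes and $u'$ comes out total and computable. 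A final verification — once more via \autoref{unifcompo}, chaining the constant middle link with the two (now effective) outer links — confirms that $u'$ is a uniformity function for $f$.

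The step I expect to be the main obstacle is exactly this last one: replacing Bard's universal decoder, which in the Turing setting is allowed to inspect the complement of its oracle, by an honest enumeration-operator version, and then pinning down the recursion-theoretic bookkeeping carefully enough that the fixed point produced is genuinely a \emph{correct} uniformity function rather than merely a fixed point of the defining functional. The rest — the closure of $X$ under $\equiv_e$, the composition calculus of \autoref{unifcompo}, and the column manipulations — is routine.
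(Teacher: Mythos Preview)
Your coding idea is right and matches the paper's: place the pair $p=\langle i,j\rangle$ into the oracle so that a single fixed pair of ``universal'' indices $\langle k_0,l_0\rangle$ witnesses $A[p]\equiv_e B[p]$, whence the middle link $f(A[p])\equiv_e f(B[p])$ costs only the constant $u(k_0,l_0)$. The gap is in the outer links. You need the value of a uniformity function on $\langle\mathsf{str},\mathsf{code}(p)\rangle$, and since $\mathsf{code}(p)$ ranges over infinitely many indices while $u$ is arbitrary, you cannot simply look these up. Your recursion-theorem fix does not work as stated. If on input $\langle\mathsf{str},\mathsf{code}(p)\rangle$ you ``re-enter the scheme with parameter $p$'', the universal operator applied to $A[p]$ reads $p=\langle i,j\rangle$ and simulates $\Gamma_j$ on column~$0$, producing $B[p]$, not $(A[p])[p]$; so the chain you build is for the wrong equivalence. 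If instead you re-enter with the honest parameter $p'=\langle\mathsf{str},\mathsf{code}(p)\rangle$, then the parameter does grow and the recursion never bottoms out. Either way, the Kleene fixed point you obtain is under no obligation to be a \emph{correct} uniformity function---you flag exactly this worry in your last sentence but do not discharge it.

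The paper sidesteps the recursion entirely by encoding $e$ and $i$ in \emph{unary}. Rather than passing from $A$ to $A[p]$ in one step via an index depending on $p$, one passes from $A$ to $A\oplus\{0\}$ via a fixed pair $\langle a,a^-\rangle$, then iterates a fixed ``successor'' pair $\langle b,b^-\rangle$ exactly $e$ times to reach $A\oplus\{e\}$, and repeats the trick to attach $i$ on a further column; the universal step $\langle c,c^-\rangle$ then plays your $\langle k_0,l_0\rangle$. The whole chain from $A$ to $B$ now uses only the six fixed pairs $\langle a,a^-\rangle$, $\langle a^-,a\rangle$, $\langle b,b^-\rangle$, $\langle b^-,b\rangle$, $\langle c,c^-\rangle$, $\langle c^-,c\rangle$, each possibly repeated a computable number of times. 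Hence the new uniformity function is built from six fixed values of $u$ by computable composition and iteration, and is therefore computable---no fixed-point argument needed.
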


\begin{proof}
  Let $u:\bb{N}\to\bb{N}$ be a uniformity function for $f$ and let $a$, $a^-$, $b$, $b^-$, $c$, and $c^-$ be indices for enumeration operators such that for all
  $A\subseteq \bb{N}$ and all $n,e,i\in \bb{N}$, the following holds:

  \begin{align*}
    \Gamma_a(A)&=A\oplus\{0\}\\
    \Gamma_{a^-}(A\oplus\{0\})&=A\\
    \Gamma_b(A\oplus\{n\})&=A\oplus\{n+1\}\\
    \Gamma_{b^-}(A\oplus\{n+1\})&=A\oplus\{n\}\\
    \Gamma_c(A\oplus \{e\}\oplus\{i\})&=\Gamma_e(A)\oplus \{e\}\oplus\{i\}\\
    \Gamma_{c^-}(A\oplus \{e\}\oplus\{i\})&=\Gamma_i(A)\oplus \{e\}\oplus\{i\}\\
  \end{align*}

  Suppose that $A\equiv_e B$ via $\langle e,i\rangle$, then
  \begin{align*}
    A&\equiv_e A\oplus\{0\}\mbox{ via } \langle a,a^-\rangle \\
    A\oplus\{0\}&\equiv_e A\oplus\{e\}\mbox{ via } \langle b,b^-\rangle ^e\\
    A\oplus\{e\}&\equiv_e (A\oplus\{e\})\oplus\{0\}\mbox{ via } \langle a,a^-\rangle \\
    (A\oplus\{e\})\oplus\{0\}&\equiv_e (A\oplus\{e\})\oplus\{i\}\mbox{ via } \langle b,b^-\rangle ^i\\ 
    (A\oplus\{e\})\oplus\{i\}&\equiv_e (B\oplus\{e\})\oplus\{i\}\mbox{ via } \langle c,c^-\rangle \\
    (B\oplus\{e\})\oplus\{i\}&\equiv_e (B\oplus\{e\})\oplus\{0\}\mbox{ via } \langle b^-,b\rangle ^i\\
    (B\oplus\{e\})\oplus\{0\}&\equiv_e B\oplus\{e\}\mbox{ via } \langle a^-,a\rangle \\
    B\oplus\{e\}&\equiv_e B\oplus\{0\}\mbox{ via } \langle b^-,b\rangle ^e\\
    B\oplus\{0\}&\equiv_e B\mbox{ via } \langle a^-,a\rangle    
  \end{align*}

  So, define $v:\bb{N}\to \bb{N}$ by
  \[v( e,i)=u( a,a^-)\circ u( b,b^-)^e \circ u( a,a^-) \circ u( b,b^-)^i \circ u( c,c^-)\circ u( b^-,b)^e \circ u( a^-,a) \circ u( b^-,b)^e \circ u( a^-,a)\] By \hyperref[unifcompo]{Lemma~\ref{unifcompo}},
  $v$ is a uniformity function for $f$. Moreover, $u( a,a^-)$, $u( a^-,a)$, $u( b,b^-)$, $u( b^-,b)$, $u( c,c^-)$, and $u( c^-,c)$ are six fixed
  integers, so $v$ is computable.
\end{proof}

In particular, countably many uniformity functions are enough to characterize all the uniformly $e$-invariant functions. The next proposition shows
that we can combine a computable sequences of uniformity functions into a single one by just making a slight modification to the corresponding
uniformly $e$-invariant functions.

\begin{proposition}\label{prop:sameunif}
  Let $\{u_n\}_{n\in\bb{N}}$ be a computable sequence of computable uniformity functions. If $\{f_n\}_{n\in\bb{N}}$ is a sequence of uniformly
  $e$-invariant functions such that the uniformity function of $f_i$ is $u_i$, then there is a sequence $\{\hat{f}_n\}_{n\in\bb{N}}$ of uniformly
  $e$-invariant functions and a single uniformity function $u$ for all of them such that for all $A$, $\hat{f}_n(A)\equiv_1 f_n(A)$, uniformly on $n$.
\end{proposition}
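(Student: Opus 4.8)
The plan is to combine all the $f_n$ into a single function on a "tagged" domain so that a uniform reduction can read off which $f_n$ to apply. First I would fix a computable coding of the natural numbers into sets and define, for each $n$, the modified function $\hat f_n(A) = f_n(A) \oplus \{n\}$ (say, via $\hat f_n(A) = \{\langle x,0\rangle : x \in f_n(A)\} \cup \{\langle n,1\rangle\}$, so that $\hat f_n(A) \equiv_1 f_n(A)$ uniformly in $n$, and the tag $n$ is recoverable with one fixed $e$-operator). The point of the tag is that from an enumeration of $\hat f_n(A)$ one can uniformly enumerate both $f_n(A)$ and the index $n$.

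Next I would build the single uniformity function $u$. Suppose $A \equiv_e B$ via $\langle e,i\rangle$. Then for each $n$, $f_n(A) \equiv_e f_n(B)$ via $u_n(e,i)$, and hence $\hat f_n(A) \equiv_e \hat f_n(B)$ via some pair of indices computable from $u_n(e,i)$ and $n$ — call the resulting operators $\hat e_n = \hat e(u_n(e,i),n)$ and $\hat\imath_n = \hat\imath(u_n(e,i),n)$, where $\hat e,\hat\imath$ are fixed computable functions that wrap $u_n$'s operators with the tag-coding and tag-decoding operators. The key step is then to define a single operator $\Gamma_{d}$ that, given an enumeration of $\hat f_n(A)$ as input, first reads the tag $n$ off the input (enumerating $\langle n,1\rangle$ reveals $n$), then computes $\hat e_n$ from $n$ and $e,i$ and the (fixed, computable) sequence $\{u_n\}$, and thereafter simulates $\Gamma_{\hat e_n}$ on the same input; and symmetrically an operator $\Gamma_{d'}$ for the other direction. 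Both $d$ and $d'$ depend only on $\langle e,i\rangle$ (the sequence $\{u_n\}$ being a fixed computable object baked into $d,d'$), and by the $s$-$m$-$n$ theorem $\langle e,i\rangle \mapsto \langle d,d'\rangle$ is computable. Setting $u(e,i) = \langle d, d'\rangle$ gives a uniformity function that works for every $\hat f_n$ simultaneously, because $\Gamma_d(\hat f_n(A)) = \Gamma_{\hat e_n}(\hat f_n(A)) = \hat f_n(B)$ and likewise for $d'$.

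Finally I would check that $u$ is independent of $n$ in the right sense: the operator $d$ does not know $n$ in advance, but it learns $n$ from its own oracle, so the \emph{same} $d$ certifies $\hat f_n(A) \equiv_e \hat f_n(B)$ for all $n$ at once. One subtlety to address is uniformity of enumeration operator composition and of "read a number off the oracle then branch"—these are routine applications of the recursion/$s$-$m$-$n$ theorems for c.e.\ sets, together with the fact that the map $(\Gamma_j, D) \mapsto$ (an index for the operator that on oracle $Z$ enumerates $n$ from the encoded tag, then runs $\Gamma_j(Z)$) is computable. The verification that $\hat f_n(A) \equiv_1 f_n(A)$ uniformly in $n$ is immediate from the chosen coding, since both the coding map $x \mapsto \langle x,0\rangle$ and its inverse on the relevant column are computable bijections with the tag bit adding only the single fixed element $\langle n,1\rangle$.

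The main obstacle is purely bookkeeping: making sure the branching operator $\Gamma_d$ is genuinely a \emph{single} c.e.\ set that works for all $n$ — i.e., that the "wait until the tag $n$ appears, then behave like $\Gamma_{\hat e_n}$" procedure is itself computably enumerable and that the index $d$ depends only on $\langle e,i\rangle$. Once this is set up via $s$-$m$-$n$, everything else is a direct application of \hyperref[unifcompo]{Lemma~\ref{unifcompo}} and the closure properties of $\leq_e$.
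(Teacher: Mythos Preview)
Your proposal is correct and follows the same overall strategy as the paper's proof: tag $f_n(A)$ with $n$ so that a single enumeration operator can read off $n$ from its oracle and dispatch to the reduction supplied by $u_n(e,i)$. The implementations differ only in the tagging mechanism. You attach a singleton marker $\langle n,1\rangle$ and let $\Gamma_d$ branch on it---concretely, $\Gamma_d=\bigcup_n\{\langle m,D\cup\{\langle n,1\rangle\}\rangle:\langle m,D\rangle\in\Gamma_{\hat e_n}\}$, which fires only the $n$-th summand on input $\hat f_n(A)$ because $\langle n,1\rangle$ is the unique column-1 element there. The paper instead sets $\hat f_n(A)=\{\langle n,x\rangle:x\in f_n(A)\}$, placing the entire output in column~$n$, and takes the universal operator to be the union $\bigcup_n\Gamma_{h_n(i_n)}$, where each $\Gamma_{h_n(j)}$ is a copy of $\Gamma_j$ acting only on column~$n$; disjointness of columns then plays exactly the role your marker does. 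The paper's version is slightly more streamlined (a bare union of column-localized operators, with no explicit ``wait for the tag'' step), while yours makes the read-then-simulate intuition explicit; both are valid and require the computability of $\{u_n\}$ at the same point, namely to make the union c.e.

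One cosmetic point: your claim that $\hat f_n(A)\equiv_1 f_n(A)$ fails when $f_n(A)=\varnothing$, since then $\hat f_n(A)=\{\langle n,1\rangle\}$ is nonempty. (The paper's column encoding has the dual edge case at $f_n(A)=\mathbb{N}$.) This is harmless for how the proposition is used and is easily patched.
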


\begin{proof}
  Let $u_n( i,j)=\langle i_n,j_n\rangle $ and define $\hat{f}_n(A)=\{\langle n, x\rangle\mid x\in f_n(A)\}$. We have to construct a uniformity function $u$ that works for every $\hat{f}_n$.

  For each $j\in\bb{N}$, let $h_n(j)=h(j,n)$ be an index for the enumeration operator that copies $\Gamma_j$, but only acts on the $n$-th column. Namely,
  \[\Gamma_{h_n(j)}=\{\left\langle \langle n ,m\rangle, \{\langle n, x\rangle\mid x\in D\} \right \rangle\mid \langle m,
    D\rangle\in\Gamma_j\}.\] Note that if $A\equiv_eB$ via $\langle i,j\rangle $, we have that $\hat{f}_n(A)\equiv_e \hat{f}_n(B)$ via
  $\left \langle h_n(i_n), h_n(j_n) \right \rangle $. So, we define the uniformity function $u( i, j )=\langle k,m\rangle $ where $k$ and $m$ are indices for the enumeration
  operators
  \[\Phi =\bigcup_{n\in\bb{N}}\Gamma _{h_n(i_n)}\quad\mbox{ and }\quad\Psi =\bigcup_{n\in\bb{N}}\Gamma
    _{h_n(j_n)}\,,\] respectively. Observe that we need the sequence $\{u_n\}$ to be a computable sequence of computable functions to ensure that
  $\Phi $ and $\Psi $ are indeed enumeration operators.
\end{proof}

The previous lemma lets us recreate \hyperref[unifcompo]{Example~\ref{example:counterglobal}} in the uniform case.

\begin{example}
  Define $h:\ca{P}(\bb{N})\to \ca{P}(\bb{N})$ as \[h(A)=
    \begin{cases}
      A\oplus \varnothing & \mbox{if }A \mbox{ has cototal degree}\\
      \varnothing \oplus A^{\diamond} & \mbox{otherwise.}
    \end{cases}\] Note that both of the maps $A\mapsto A$ and $A \mapsto A^{\diamond}$ are uniformly invariant and, by (the proof of) \hyperref[prop:sameunif]{Proposition~\ref{prop:sameunif}}, $h$ is also
  uniformly invariant. Moreover, $h$ is injective, so it is not locally constant. 
\end{example}

As mentioned before, the previous example can be extended to countably many functions by similar techniques to the previous section.

\begin{corollary}\label{cor:unifantich} There is an infinite antichain
  $\{f_n \mid n\in\bb{N}\}$ in $\leq_e^{\cone}$ consisting of Borel functions that are injective and uniformly $e$-invariant.
\end{corollary}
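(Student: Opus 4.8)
The plan is to combine the two main tools from this section: the arithmetical partition of $\mathcal{D}_e$ from \hyperref[thm:continuumcofinal]{Theorem~\ref{thm:continuumcofinal}} (which gives us continuum-many pairwise disjoint cofinal classes $\mathcal{C}(A)$) and the index-merging machinery of \hyperref[prop:sameunif]{Proposition~\ref{prop:sameunif}}. First I would fix a countable supply of ``behaviours'' that are pairwise $\leq_e^{\cone}$-incomparable when they are glued onto disjoint cofinal pieces. The cleanest choice is to use the classes from the topological approach already cited (\cite{Kihara_Pauly_2022,kiharaEnumerationDegreesNonmetrizable}): pick countably many pairwise disjoint cofinal classes $\mathcal{D}_0,\mathcal{D}_1,\dots$ of enumeration degrees, together with a fixed total-but-noncomputable set $S$, and for each $n$ define $f_n$ to behave like the identity on $\mathcal{D}_n$ and like the constant map $A\mapsto S$ off of $\mathcal{D}_n$, then tag the output by the column $n$ as in \hyperref[prop:sameunif]{Proposition~\ref{prop:sameunif}} so the $f_n$ become injective. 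Alternatively, and more in keeping with the tools just developed, one can let $\{\mathcal{C}(A)\}$ range over a fixed countable subfamily of the partition from \hyperref[thm:continuumcofinal]{Theorem~\ref{thm:continuumcofinal}}, say indexed by $A \in \{A_n\}_{n\in\mathbb{N}}$ a computable listing of distinct sets, and set $f_n(X) = \{\langle n, x\rangle \mid x \in X\}$ when $\deg_e(X) \in \mathcal{C}(A_n)$ (using the arithmetical decidability established in \hyperref[cor:jgfunctions]{Corollary~\ref{cor:jgfunctions}}) and $f_n(X) = \{\langle n,x\rangle \mid x\in X^\diamond\}$ otherwise; the skip output off-domain guarantees a strict behavioural difference.

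Second, I would check Borelness and uniform invariance. Each $f_n$ is Borel for exactly the reason given in the proof of \hyperref[cor:jgfunctions]{Corollary~\ref{cor:jgfunctions}}: deciding whether $\deg_e(X) \in \mathcal{C}(A_n)$ is arithmetical, and the two branch maps ($X \mapsto X$ restricted to a column, and the skip) are Borel. Each $f_n$ is $e$-invariant as a patchwork of $e$-invariant maps on an $e$-invariant partition, and is injective because of the column tag together with injectivity of both branches. For uniform invariance I would produce, for each branch, a computable uniformity function (the identity-in-a-column map and the skip both have explicit ones, by \hyperref[prop:ejumpProps]{Proposition~\ref{prop:ejumpProps}(\ref{prop:jumponered})} and \hyperref[skipprop]{Proposition~\ref{skipprop}(\ref{prop:skiponered})}), observe that the partition membership is $e$-invariant, and then invoke \hyperref[prop:sameunif]{Proposition~\ref{prop:sameunif}} (and the argument behind it, which is exactly the gluing of finitely many uniformly $e$-invariant functions along a decidable $e$-invariant partition) to assemble a single computable uniformity function for $f_n$. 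Since $\{u_n\}$ is then a computable sequence of computable uniformity functions, \hyperref[prop:sameunif]{Proposition~\ref{prop:sameunif}} even lets us merge all the $f_n$ under one uniformity function, though that is not needed for the statement.

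Third comes the antichain verification, which is the heart of the argument. I need that for $m \neq n$, $f_m \nleq_e^{\cone} f_n$ and $f_n \nleq_e^{\cone} f_m$. Fix any oracle $X_0$; I must find arbitrarily large $Z$ witnessing the failure. Take $Z \in \mathcal{C}(A_m)$ with $Z >_e X_0$ (possible by cofinality of $\mathcal{C}(A_m)$) and arranged so that $Z$ does \emph{not} have cototal degree — this is achievable because within $\mathcal{C}(A_m)$ the members $M_{A_m \oplus C}$ are strong quasiminimal covers built from generics, hence non-cototal (the enumeration degrees of $2$-generic sets are not cototal, \cite{Andrews2019OnDegrees}), so in fact every $Z \in \mathcal{C}(A_m)$ is already non-cototal. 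Then $f_m(Z) \equiv_e Z$ while $f_n(Z) \equiv_e Z^\diamond$ (since $\deg_e(Z) \in \mathcal{C}(A_m)$, disjoint from $\mathcal{C}(A_n)$, so $f_n$ takes its off-domain branch), and $Z \nleq_e Z^\diamond$ exactly because $Z$ is not cototal, by \hyperref[skipprop]{Proposition~\ref{skipprop}(\ref{prop:skipincreasing})}. Hence $f_m(Z) \nleq_e f_n(Z)$ on a cofinal set of $Z$, so $f_m \nleq_e^{\cone} f_n$; swapping $m$ and $n$ gives the other direction.

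The main obstacle is the bookkeeping in the last step — making sure the off-domain branch really produces a set that is \emph{provably} not $e$-below the on-domain value, robustly for cofinally many $Z$. Using the skip as the off-domain branch and the non-cototality of the $\mathcal{C}(A_m)$ degrees handles this cleanly, since $Z \nleq_e Z^\diamond$ is equivalent to non-cototality and non-cototality is automatic for strong quasiminimal covers coming from sufficiently generic sets. If instead one prefers the constant-behaviour version with a fixed $S$, the obstacle shifts to choosing $S$ and the classes $\mathcal{D}_n$ so that $S \nleq_e Z$ for cofinally many $Z \in \mathcal{D}_m$; this is a genericity/forcing argument that is a little more delicate, which is why I would favour the skip-based construction. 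Everything else — Borelness, $e$-invariance, injectivity, computable uniformity via \hyperref[prop:sameunif]{Proposition~\ref{prop:sameunif}} — is a direct transcription of what has already been proved in this section.
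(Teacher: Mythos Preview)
Your overall plan---pick countably many of the disjoint cofinal classes from \hyperref[thm:continuumcofinal]{Theorem~\ref{thm:continuumcofinal}}, let $f_n$ be the identity on the $n$th class and the skip elsewhere, and assemble a uniformity function via \hyperref[prop:sameunif]{Proposition~\ref{prop:sameunif}}---is exactly what the paper intends (no proof is given; the text just says the preceding Example ``can be extended to countably many functions by similar techniques''). Two steps of your execution, however, do not go through as written.

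First, you place both branches of $f_n$ into the \emph{same} column $n$. The mechanism behind \hyperref[prop:sameunif]{Proposition~\ref{prop:sameunif}} and the Example immediately preceding the corollary works precisely because each component function occupies its \emph{own} column: the merged operator is a disjoint union of column-restricted operators, and only the inhabited column fires. With both the identity branch and the skip branch sitting in column $n$, a single uniformity function would have to output an enumeration operator that simultaneously realises the identity reduction and the skip reduction on that column, and there is no reason such an operator exists. The fix is to copy the Example: set $f_n(X)=X\oplus\varnothing$ if $\deg_e(X)\in\mathcal{C}(A_n)$ and $f_n(X)=\varnothing\oplus X^\diamond$ otherwise. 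Injectivity survives because the two cases are distinguished by which half is empty and $X\mapsto X^\diamond$ is injective on sets.

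Second, your antichain check picks the wrong witnessing class. To show $f_m\nleq_e^{\cone}f_n$ you take $Z\in\mathcal{C}(A_m)$, which forces you to argue $Z\nleq_e Z^\diamond$, i.e.\ that $Z$ has non-cototal degree. Your justification---that the $M_{A_m\oplus C}$ come from generics and 2-generic sets are non-cototal---fails on two counts: the construction in \hyperref[thm:continuumcofinal]{Theorem~\ref{thm:continuumcofinal}} only makes $G_X$ 1-generic relative to $X$, and in any case $M_X=X\oplus\overline{X}\oplus G_X$ is a join with a total set, so non-cototality of the generic would not transfer. But none of this is needed. Take $Z\in\mathcal{C}(A_n)$ instead: then $f_m(Z)\equiv_e Z^\diamond$ (off-domain branch) and $f_n(Z)\equiv_e Z$ (on-domain branch), and $Z^\diamond\nleq_e Z$ holds for \emph{every} $Z$ by \hyperref[skipprop]{Proposition~\ref{skipprop}}(a). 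This witnesses $f_m(Z)\nleq_e f_n(Z)$ on the cofinal set $\mathcal{C}(A_n)$ with no side conditions, and the argument is then genuinely symmetric in $m$ and $n$.
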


\section{A Positive Local Result}
\label{sec:locMCe}
We have already seen that the global structure of the $e$-invariant functions is not as tame as Martin conjectured for the $T$-invariant case, even
with the uniformity assumption. However, the failure comes from a topological accident; we can assemble together different functions to create many
unique patchworks on every upper cone. The empirical observation remains: the only known uniformly $e$-invariant functions are combinations of
constants, the identity, and iterates of the skip or the jump. One approach that isolates this problem is to follow Bard's proof \cite{bardUniformMartinsConjecture2020} of part 1 of the
uniform Martin's Conjecture (UMC 1). Adapting his approach to the context of the enumeration degrees, we obtain the best possible local analogue of
(UMC 1) in the enumeration degrees (\hyperref[thm:locumc]{Theorem~\ref{thm:locumc}}). So, despite the negative results in the previous sections, the enumeration jump and the skip might
indeed generate all natural uniformly $e$-invariant functions that are not locally constant, even if the generating process is not just iteration; we
will discuss this further in the last section.

\begin{theorem}\label{thm:locumc}
  Let $A\subseteq \bb{N}$ and $f\colon\deg_e(A)\to \ca{P}(\bb{N})$ be uniformly $e$-invariant. If $f$ is not constant, then
  \[A\leq_e f(A)\quad\mbox{ or }\quad A^\diamond\leq_e f(A).\]
\end{theorem}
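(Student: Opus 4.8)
The plan is to follow Bard's local strategy, turning the statement into a uniform coding of $K_A$ into $f(A)$, and to let the one-sidedness of enumeration reducibility account for the disjunction. First I would apply \autoref{compunif} to assume $f$ carries a computable uniformity function $u$. Since $f$ is $e$-invariant, all values $f(Z)$ with $Z\equiv_e A$ lie in a single $e$-degree, so ``not constant'' yields sets $A_0,A_1\equiv_e A$ and a number $n_0$ with $n_0\in f(A_1)\setminus f(A_0)$. The conclusion transfers along $\equiv_e$ (replacing $A$ by an $e$-equivalent set preserves $A^\diamond$ up to $\equiv_e$ by \autoref{skipprop}, preserves $f(A)$ up to $\equiv_e$, and preserves both disjuncts), so I would take $A_0$ as the base point; after renaming I have $B\equiv_e A$, $n_0\in f(B)\setminus f(A)$, and fixed indices with $\Gamma_{\beta_0}(A)=B$ and $\Gamma_{\beta_1}(B)=A$. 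Finally, since $A\equiv_e K_A$ and $A^\diamond=\overline{K}_A$, it suffices to prove $K_A\leq_e f(A)$ or $\overline{K}_A\leq_e f(A)$.

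The core construction: for each pair $\langle i,k\rangle$ I would build, uniformly (using the recursion theorem where needed), a set $E_{i,k}$ together with an $e$-reduction index $\rho(i,k)$, computable in $\langle i,k\rangle$, such that $E_{i,k}=\Gamma_{\rho(i,k)}(A)$, while fixing a spine column of $E_{i,k}$ that always carries an untouched copy of $A$, so that a single fixed operator $\Gamma_\sigma$ recovers $A$ from $E_{i,k}$. Then $E_{i,k}\equiv_e A$ via $\langle\rho(i,k),\sigma\rangle$, so $u(\rho(i,k),\sigma)=\langle\mu(i,k),\nu(i,k)\rangle$ is computable and $\Gamma_{\mu(i,k)}(f(A))=f(E_{i,k})$; hence the predicate $T(i,k):\equiv\bigl[\,n_0\in f(E_{i,k})\,\bigr]$ satisfies $\{\langle i,k\rangle:T(i,k)\}\leq_e f(A)$, uniformly (from an enumeration of $f(A)$ search for a finite $D\subseteq f(A)$ with $\langle n_0,D\rangle\in\Gamma_{\mu(i,k)}$). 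The payload of $E_{i,k}$ is designed to imitate $B$ once $k$ is observed to enter $\Gamma_i(A)$ and to imitate $A$ otherwise, the goal being $T(i,k)\Leftrightarrow k\in\Gamma_i(A)$, which would give $K_A=\{\langle i,k\rangle:T(i,k)\}\leq_e f(A)$ and hence $A\leq_e f(A)$.

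The step I expect to be the main obstacle — and the reason the skip appears in the conclusion — is making this coding actually go through, given that $f$ is only invariant up to $e$-degree while non-constancy is a set-level phenomenon, and that enumeration reductions are positive: the hybrid sets produced by the delayed imitation are only $e$-equivalent to, not equal to, the intended targets $A$ and $B$, so one must arrange (via a careful choice of the witnesses $A,B$, the column layout, and the transport operators $\Gamma_{\mu(i,k)}$) that their $f$-images still separate at $n_0$. Where this can be done on the ``positive'' side one obtains $K_A\leq_e f(A)$; where positivity of enumeration operators blocks the coding of the positive fact $k\in\Gamma_i(A)$, one should instead be able to $e$-enumerate the obstruction itself from $f(A)$, and that obstruction records precisely the negative fact $k\notin\Gamma_i(A)$, so collecting these over all $\langle i,k\rangle$ yields $\overline{K}_A\leq_e f(A)$, i.e.\ $A^\diamond\leq_e f(A)$. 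Verifying that this dichotomy is exhaustive — that for every $\langle i,k\rangle$ one genuinely decodes one side of ``$k\in\Gamma_i(A)$?'', and that the decoding is uniform across $\langle i,k\rangle$ — is where I expect the real work to lie.
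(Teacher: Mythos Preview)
Your overall framework---a computable uniformity function, a uniformly indexed family $E_{i,k}\equiv_e A$ whose $f$-images can be read off $f(A)$, and a single test element separating two target values---is exactly the paper's. The gap is in the step you yourself flag, but your diagnosis of it is slightly off. The issue is not that the hybrid sets are merely $e$-equivalent to the intended targets; in the actual argument they are \emph{literally equal} to one of two fixed sets. The real constraint is monotonicity of enumeration operators: an operator that ``imitates $B$ once $k$ is seen in $\Gamma_i(A)$ and imitates $A$ otherwise'' does not exist unless $A\subseteq B$, since the operator can only \emph{add} elements when the side condition fires. The paper's fix is to replace your witnessing pair $(A,B)$ by the nested pair $(A\oplus\varnothing,\,A\oplus B)$: a single operator $\Gamma_{h(n)}$ applied to $A\oplus K_A$ then outputs exactly $A\oplus\varnothing$ when $n\notin K_A$ and exactly $A\oplus B$ when $n\in K_A$, while the reverse reduction (project the first column) has a fixed index independent of $n$.

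This shift creates a new problem that your outline does not anticipate: $f(A)\neq f(B)$ does not by itself give $f(A\oplus\varnothing)\neq f(A\oplus B)$. That is precisely what the two preparatory lemmas (\autoref{lem-i} and \autoref{lem-ii}) supply: from non-constancy of $f$ one can always find $B\equiv_e A$ with $f(A\oplus B)\neq f(A\oplus\varnothing)$ or $f(A\oplus B)\neq f(\varnothing\oplus B)$, using uniformity itself to propagate inequality through the padding. Once this is arranged, the disjunction in the conclusion is not the per-index ``positive side vs.\ obstruction'' dichotomy you sketch, but a single global case split: with $M$ least in $f(A\oplus B)\symdif f(A\oplus\varnothing)$, the set $\{n:M\in f(X_n)\}\leq_e f(A)$ equals $K_A$ if $M\in f(A\oplus B)\setminus f(A\oplus\varnothing)$, and equals $\overline{K_A}=A^\diamond$ if $M$ lies on the other side.
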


While the overall proof follows the argument of Bard, we need a more delicate coding to handle the proof with only positive information. It is
noteworthy that the presence of the skip in the conclusion of the theorem, which is unavoidable, stems from the only part of the argument where
negative information is necessary. We need two lemmas before we proceed to the proof.

\begin{lemma}\label{lem-i}
  Let $X\subseteq \ca{P}(\bb{N})$ be closed under enumeration equivalence and $f\colon X\to\bb{N}$ be uniformly $e$-invariant. For any $A,B\in X$ such that
  $f(A)\neq f(B)$, we have that $f(A\oplus\varnothing)\neq f(B\oplus\varnothing)$ and $f(\varnothing\oplus A)\neq f(\varnothing\oplus B)$.
\end{lemma}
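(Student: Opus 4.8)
The plan is to exploit the fact that the set $\varnothing$ is c.e., so that $A\oplus\varnothing\equiv_e A$ and $\varnothing\oplus A\equiv_e A$ are witnessed by a \emph{single fixed pair of indices}, independent of $A$; uniformity then forces the decoding operators on the $f$-side to be fixed as well.

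First I would fix enumeration operator indices $p,q$ such that $\Gamma_p(C)=C\oplus\varnothing$ and $\Gamma_q(C\oplus\varnothing)=C$ for every $C\subseteq\mathbb{N}$ — these exist because $\varnothing$ is c.e., so one column can be filled with nothing and the other read off directly. Thus for every $A\in X$ we have $A\equiv_e A\oplus\varnothing$ via $\langle p,q\rangle$ (the same pair for all $A$). Letting $u$ be a uniformity function for $f$ and writing $u(p,q)=\langle p',q'\rangle$, uniformity gives $f(A)\equiv_e f(A\oplus\varnothing)$ via $\langle p',q'\rangle$ for \emph{every} $A\in X$; in particular $f(A)=\Gamma_{q'}\bigl(f(A\oplus\varnothing)\bigr)$ with the same index $q'$ for all $A$.

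Now suppose toward a contradiction that $f(A\oplus\varnothing)=f(B\oplus\varnothing)$. Applying the fixed operator $\Gamma_{q'}$ to both sides yields
\[
f(A)=\Gamma_{q'}\bigl(f(A\oplus\varnothing)\bigr)=\Gamma_{q'}\bigl(f(B\oplus\varnothing)\bigr)=f(B),
\]
contradicting $f(A)\neq f(B)$. Hence $f(A\oplus\varnothing)\neq f(B\oplus\varnothing)$. The second claim is identical: fix indices $r,s$ with $\Gamma_r(C)=\varnothing\oplus C$ and $\Gamma_s(\varnothing\oplus C)=C$, so $A\equiv_e\varnothing\oplus A$ via $\langle r,s\rangle$ uniformly in $A$, and run the same argument with $u(r,s)$ in place of $u(p,q)$.

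The one conceptual point worth flagging — the only place the hypothesis is genuinely used — is that $e$-invariance alone is \emph{not} enough: it would only give $f(A\oplus\varnothing)\equiv_e f(A)$, which is compatible with $f(A\oplus\varnothing)=f(B\oplus\varnothing)$ whenever $f(A)\equiv_e f(B)$. Uniformity is exactly what upgrades this, since it supplies one operator that decodes $f(\cdot\oplus\varnothing)$ back to $f(\cdot)$ across the whole domain, and a function applied to equal inputs returns equal outputs. No serious obstacle remains beyond checking that the four coding operators $\Gamma_p,\Gamma_q,\Gamma_r,\Gamma_s$ are as claimed, which is routine.
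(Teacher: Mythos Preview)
Your proof is correct and is essentially the same argument as the paper's: both fix a single pair of indices witnessing $A\equiv_e A\oplus\varnothing$ independently of $A$, apply the uniformity function once to obtain a fixed decoding operator on the $f$-side, and conclude by contrapositive that equal values at $A\oplus\varnothing$ and $B\oplus\varnothing$ force equal values at $A$ and $B$. The only cosmetic differences are the naming of the indices and that the paper opens with an (unnecessary) remark that the case $A\nequiv_e B$ is trivial, which your argument handles uniformly anyway.
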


\begin{proof}
  The result is obvious if $A\nequiv_eB$, so suppose that $A\equiv_e B$. We proceed by contrapositive, without loss of generality, suppose that
  $f(A\oplus\varnothing)= f(B\oplus\varnothing)$. Let $i,j\in\bb{N}$ such that
  \begin{align*}
    \Gamma_i&=\{\langle m, \{2m\}\rangle\mid m\in\bb{N} \}\\
    \Gamma_j&=\{\langle 2m, \{m\}\rangle\mid m\in\bb{N} \}.
  \end{align*}
  Then, we have that $A\equiv_e A\oplus\varnothing$ via $\langle i,j\rangle$ and $B\equiv_e B\oplus\varnothing$ via $\langle i,j\rangle$. Since $f$ is uniformly
  $e$-invariant, it has a uniformity function $u$ and there are $k,\ell\in\bb{N}$ such that $u(i,j)=\langle k,\ell \rangle$. Thus,
  \[f(A)=\Gamma_k(f(A\oplus\varnothing))=\Gamma_k(f(B\oplus\varnothing))=f(B).\]
\end{proof}

\begin{lemma}\label{lem-ii}
  Let $A\in \ca{P}(\bb{N})$ and $f \colon \deg_e(A)\to \ca{P}(\bb{N})$ uniformly $e$-invariant and not constant. Then, there is a $B\in\deg_e(A)$ such that
  $f(A\oplus B)\neq f(A\oplus\varnothing)$ or $ f(A\oplus B)\neq f(\varnothing\oplus B)$.
\end{lemma}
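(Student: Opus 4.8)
The plan is to argue by contradiction: assume that for every $B\in\deg_e(A)$ we have both $f(A\oplus B)=f(A\oplus\varnothing)$ and $f(A\oplus B)=f(\varnothing\oplus B)$, and derive that $f$ is constant on $\deg_e(A)$, contradicting the hypothesis. The key observation is that these two families of equalities, taken together, force a "shift-invariance" that propagates the value of $f$ across all of $\deg_e(A)$. Indeed, fix any $B\in\deg_e(A)$. Applying the hypothesis with the oracle $B$ in the first slot (which is legitimate since $B\equiv_e A$, so $B\in\deg_e(A)$ and $\deg_e(B)=\deg_e(A)$), we get $f(B\oplus C)=f(B\oplus\varnothing)$ and $f(B\oplus C)=f(\varnothing\oplus C)$ for all $C\in\deg_e(A)$. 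Combining the second equality instantiated at two different first coordinates, $f(A\oplus C)=f(\varnothing\oplus C)=f(B\oplus C)$, so the value $f(X\oplus C)$ does not depend on the choice of $X\in\deg_e(A)$; symmetrically, using the first-coordinate equality, $f(X\oplus Y)$ does not depend on $Y$ either. Hence $f$ is constant on the set $\{X\oplus Y\mid X,Y\in\deg_e(A)\}$, and in particular $f(A\oplus\varnothing)=f(\varnothing\oplus\varnothing)$, etc.

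The remaining gap is that a general $Z\in\deg_e(A)$ need not literally be of the form $X\oplus Y$ with $X,Y\in\deg_e(A)$; we only control $f$ on the "doubled" sets. To close this, I would use the standard fact that $Z\equiv_e Z\oplus\varnothing$ for any $Z$ (via the fixed pair of enumeration operators appearing in \autoref{lem-i}), so by $e$-invariance $f(Z)=f(Z\oplus\varnothing)$. Since $Z\in\deg_e(A)$, the set $Z\oplus\varnothing$ lies in the doubled family (take $X=Z$, $Y=\varnothing$; note $\varnothing\in\deg_e(A)$ only if $A$ is c.e., so instead take $X=Z$, $Y=Z$ and use $f(Z\oplus Z)=f(Z\oplus\varnothing)$ which follows from the first-coordinate hypothesis applied with oracle $Z$). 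Thus $f(Z)=f(Z\oplus Z)=f(A\oplus A)$, a fixed value independent of $Z$, so $f$ is constant — the desired contradiction. One subtlety to handle carefully: throughout we repeatedly use the hypothesis with oracles other than $A$ itself, which is valid precisely because the hypothesis of the lemma, as I am negating it, quantifies over all $B\in\deg_e(A)$ and $\deg_e(B)=\deg_e(A)$, so the negated statement is a statement about the whole degree and can be re-instantiated at any member.

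The main obstacle I anticipate is bookkeeping around which sets actually belong to $\deg_e(A)$ — in particular, $\varnothing$ and other "small" sets are not in $\deg_e(A)$ unless $A$ is c.e., so every instantiation of the hypothesis must use a genuine member of the degree as the shifting parameter. The cleanest route is to only ever shift by sets already known to be enumeration-equivalent to $A$ (such as $A$ itself, or $A\oplus A$, or $Z$ for $Z\in\deg_e(A)$), never by $\varnothing$, and to move between $Z$ and $Z\oplus\varnothing$ purely through $e$-invariance rather than through the hypothesis. Once that discipline is in place, the chain of equalities $f(Z)=f(Z\oplus\varnothing)=f(Z\oplus Z)=f(A\oplus Z)=f(\varnothing\oplus Z)=f(\varnothing\oplus A\oplus\varnothing)=\dots=f(A\oplus\varnothing)$ collapses everything to a single value, contradicting non-constancy.
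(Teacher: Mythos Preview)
Your argument has a genuine gap at the ``re-instantiation'' step. The negation of the conclusion is
\[
\forall B\in\deg_e(A)\colon\; f(A\oplus B)=f(A\oplus\varnothing)\ \text{and}\ f(A\oplus B)=f(\varnothing\oplus B),
\]
where $A$ is the \emph{fixed} set in the statement of the lemma. You then claim that ``since $\deg_e(B)=\deg_e(A)$, the negated statement is a statement about the whole degree and can be re-instantiated at any member,'' and proceed to use $f(B\oplus C)=f(B\oplus\varnothing)$ and $f(Z\oplus Z)=f(Z\oplus\varnothing)$. But the negated conclusion says nothing about $f(B\oplus C)$ or $f(Z\oplus\varnothing)$ for $B,Z\neq A$; the first slot is pinned to the specific representative $A$, not to an arbitrary element of the degree. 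The lemma is not asserted for every base point simultaneously, so when you negate it you only get failure at the single base point $A$. Every equality in your chain that moves the left coordinate away from $A$ (e.g.\ $f(Z\oplus\varnothing)=f(Z\oplus Z)$ and $f(Z\oplus Z)=f(A\oplus Z)$) is unjustified. There is also a secondary slip: $e$-invariance gives only $f(Z)\equiv_e f(Z\oplus\varnothing)$, not $f(Z)=f(Z\oplus\varnothing)$ as sets, and the lemma is about literal equality.

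The missing ingredient is precisely \autoref{lem-i}, which you mention but do not actually use. The paper picks any $C$ with $f(C)\neq f(A)$; if $B=C$ already works, done. Otherwise $f(A\oplus\varnothing)=f(A\oplus C)=f(\varnothing\oplus C)$, and now \autoref{lem-i} forces $f(\varnothing\oplus A)\neq f(\varnothing\oplus C)=f(A\oplus\varnothing)$, so $B=A$ works. Equivalently, your contradiction strategy can be completed in one line once you invoke \autoref{lem-i}: the negated hypothesis gives $f(\varnothing\oplus B)=f(A\oplus\varnothing)$ for every $B\in\deg_e(A)$, so all the values $f(\varnothing\oplus B)$ coincide, and the contrapositive of \autoref{lem-i} then yields $f(B_1)=f(B_2)$ for all $B_1,B_2\in\deg_e(A)$ --- constant, contradiction. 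The point is that moving information between $f(B)$ and $f(\varnothing\oplus B)$ requires the uniformity argument packaged in \autoref{lem-i}, not mere $e$-invariance.
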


\begin{proof}
  Let $C\in\deg_e(A)$ such that $f(C)\neq f(A)$. If $f(A\oplus C)\neq f(A\oplus \varnothing)$ or $f(A\oplus C)\neq f(\varnothing\oplus C)$, we are done by setting
  $B=C$. So, suppose that $f(A\oplus C)=f(A\oplus \varnothing)=f(\varnothing\oplus C)$. By \hyperref[lem-i]{Lemma~\ref{lem-i}}, we have that
  $f(A\oplus\varnothing)\neq f(C\oplus\varnothing)$ and $f(\varnothing\oplus A)\neq f(\varnothing\oplus C)$. Now, we get the result by setting $B=A$ because $f(A\oplus A)\neq f(A\oplus\varnothing)$ or $f(A\oplus A)\neq f(\varnothing\oplus A)$.
\end{proof}

\begin{proof}[Proof of \autoref{thm:locumc}]
  Let $u$ be a computable uniformity function for $f$ and $A\equiv_eB$ via $\langle a,b\rangle $ such that $f(A\oplus B)\neq f(A\oplus\varnothing)$ or
  $f(A\oplus B)\neq f(\varnothing\oplus B)$. Without loss of generality, assume that $f(A\oplus B)\neq f(A\oplus\varnothing)$; the proof is similar in the other case.

  Since $A\equiv_e K_A$, there is $k\in\bb{N}$ such that $\Gamma_{k}(A)=A\oplus K_A$. Define \[X_n=\begin{cases}
    A\oplus B& \mbox{if } n \in K_A\\
    A\oplus \varnothing & \mbox{if } n \notin K_A
  \end{cases}\]

We get that $A\equiv_e X_n$ via $\langle e, h(n)\circ k \rangle $, where
\begin{align*}
  \Gamma_e&=\{\langle m, \{2m\}\rangle\mid m\in\bb{N} \}\\
  \Gamma_{h(n)}&=\{\langle 2m,\{2m\}\rangle\mid m\in\bb{N}\}\cup\{\langle 2m+1,D\oplus \{n\}\rangle\mid \langle m, D\rangle\in \Gamma_b\}
\end{align*}

Since both $u$ and $h$ are computable, we can enumerate $f(X_n)$ from $f(A)$ uniformly on $n$. To see this, let $g_0(n),g_1(n)\in\bb{N}$ such that
$u(e, h(n)\circ k )=\langle g_0(n), g_1(n)\rangle$. Then $f(X_n)\leq_ef(A)$ via $\Gamma_{g_1(n)}$. Hence, $f(A)\geq_e \bigoplus_{n\in\bb{N}}f(X_n)$.

Now, let $M\in\bb{N}$ be the least element of $f(A\oplus B)\symdif f(A\oplus\varnothing)$ and
\[ \Psi = \left\{\left\langle n, \left\{\left\langle n, M \right\rangle\right\}\right\rangle\mid n\in\bb{N}\right\}\]

The enumeration operator $\Psi$ decodes $K_A$ or $\overline{K_A}$ from $\bigoplus f(X_n)$ depending on which side of the symmetric difference $M$ belongs to, that is,
$\Psi\left(\bigoplus f(X_n)\right)=K_A$ if $M\in f(A\oplus B)\setminus f(A\oplus \varnothing)$ and $\Psi\left(\bigoplus f(X_n)\right)=\overline {K_A}$ if $M\in f(A\oplus \varnothing)\setminus f(A\oplus B)$.

Hence, we have that
\[A\equiv_e K_A\leq_e\bigoplus_n f(X_n)\leq_e f(A)\quad\mbox{or}\quad A^\diamond=\overline {K_A}\leq_e\bigoplus_n f(X_n)\leq_e f(A).\]

\end{proof}

We have two easy but informative consequences of \hyperref[thm:locumc]{Theorem~\ref{thm:locumc}}. The first is that the regressive function from \hyperref[cor:regressive]{Corollary~\ref{cor:regressive}} has to be locally constant
if we want it to be uniformly $e$-invariant. The second is a slight extension of (UMC 1) to uniformly invariant functions from the Turing degrees to
the enumeration degrees.

\begin{corollary}\label{cor:unifregresiveconstant} If $f$ is uniformly $e$-invariant and $f(A)<_{e}A$, then $f\restriction_{\deg_e(A)}$ is constant. Thus, the only regressive uniformly $e$-invariant
  functions are locally constant.
\end{corollary}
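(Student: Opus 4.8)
The plan is to deduce this immediately from \hyperref[thm:locumc]{Theorem~\ref{thm:locumc}} together with the fact that the skip is never enumeration-below its input (\hyperref[skipprop]{Proposition~\ref{skipprop}}(a)).

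Fix $A$ with $f(A)<_e A$ and pass to the restriction $f\restriction\deg_e(A)$. This restriction is still uniformly $e$-invariant: any uniformity function $u$ for $f$ also witnesses uniform invariance of the restriction, since the defining condition on $u$ only involves pairs of enumeration-equivalent sets, and $\deg_e(A)$ is closed under $\equiv_e$. Suppose toward a contradiction that $f\restriction\deg_e(A)$ is not constant. Then \hyperref[thm:locumc]{Theorem~\ref{thm:locumc}} gives $A\leq_e f(A)$ or $A^\diamond\leq_e f(A)$. The first alternative contradicts $f(A)<_e A$, which includes $A\nleq_e f(A)$. For the second, combine $A^\diamond\leq_e f(A)$ with $f(A)\leq_e A$ to obtain $A^\diamond\leq_e A$, contradicting \hyperref[skipprop]{Proposition~\ref{skipprop}}(a). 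Hence $f\restriction\deg_e(A)$ is constant, which is the first assertion.

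For the second assertion, recall that a regressive function satisfies $f(X)<_e X$ for every $X$ in its domain; applying the first assertion to each such $X$ shows $f\restriction\deg_e(X)$ is constant, i.e.\ $f$ is locally constant.

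There is no genuine obstacle here: all the substantive work lies in \hyperref[thm:locumc]{Theorem~\ref{thm:locumc}}, and this corollary is a short logical consequence. The only points worth spelling out are that one must restrict $f$ to the single degree $\deg_e(A)$ before invoking the theorem, and that uniform invariance is inherited by this restriction.
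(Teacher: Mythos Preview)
Your proof is correct and matches the paper's intended argument: the paper states this corollary immediately after \hyperref[thm:locumc]{Theorem~\ref{thm:locumc}} without proof, treating it as a direct consequence, and you have simply spelled out the two-line contrapositive using \hyperref[skipprop]{Proposition~\ref{skipprop}}(a) to rule out the skip alternative.
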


Call a function $f\colon\ca{P}(\bb{N})\to \ca{P}(\bb{N})$ $(T,e)$-invariant if for all $A,B\in\ca{P}(\bb{N})$, if $A\equiv_T B$, then
$f(A)\equiv_e f(B)$. We can define uniformly $(T,e)$-invariant functions in the obvious way. The advantage of $(T,e)$-invariance is that we can use the
Cone Theorem on the domain to get a global result.

\begin{corollary}
  Assume Turing Determinacy (TD). If $f\colon\ca{P}(\bb{N})\to \ca{P}(\bb{N})$ is uniformly $(T,e)$-invariant, there is $X\subseteq \bb{N}$ such that
  $A\leq_e f(A)$ for all $A\geq_T X$ or $f(A)=f(B)$ for all $A,B\geq_T X$.
\end{corollary}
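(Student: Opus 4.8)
The plan is to combine \autoref{thm:locumc} with the Cone Theorem on the domain. Since $f$ is uniformly $(T,e)$-invariant, for each total set $A$ the restriction $f\restriction\deg_e(A\oplus\overline A)$ is uniformly $e$-invariant (the uniformity function transfers: from a pair of $e$-indices witnessing $A\oplus\overline A\equiv_e B\oplus\overline B$ we can, uniformly, extract $T$-indices witnessing $A\equiv_T B$, apply the given $(T,e)$-uniformity, and land on $e$-indices for $f(A)\equiv_e f(B)$). So \autoref{thm:locumc} applies at every total degree: for each total $A$, either $f\restriction\deg_e(A\oplus\overline A)$ is constant, or $A\oplus\overline A\leq_e f(A)$, or $(A\oplus\overline A)^\diamond\leq_e f(A)$.

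Next I would split the total degrees into three sets according to which of these three alternatives holds, and feed this partition to Turing Determinacy. Precisely, let $\mathcal{A}_0=\{A \mid f\restriction\deg_e(A\oplus\overline A)$ is constant$\}$, and let $\mathcal{A}_1=\{A\mid A\oplus\overline A\leq_e f(A)\}$; each is closed under Turing equivalence by $(T,e)$-invariance of $f$. By the Cone Theorem (\autoref{conethm} — or rather its consequence TD, which is all we assume), one of $\mathcal{A}_0$, $\overline{\mathcal{A}_0}$ contains a cone; and likewise for $\mathcal{A}_1$. If $\mathcal{A}_0$ contains a cone above some $X_0$, then $f(A)=f(B)$ whenever $A,B\geq_T X_0$: indeed any two such $A,B$ lie in $\mathcal{A}_0$, so $f(A)\equiv_e f(A\oplus\overline A)=$ (the constant value on $\deg_e(A\oplus\overline A)$), but we actually want genuine equality $f(A)=f(B)$ rather than $\equiv_e$, so here I would instead use the standard trick: $A\equiv_T A\oplus B\equiv_T B$ for $A,B$ in the cone, and constancy of $f$ on the $e$-degree of $(A\oplus B)\oplus\overline{A\oplus B}$ together with $(T,e)$-invariance forces $f(A)=f(A\oplus B)=f(B)$ on the nose. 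If instead $\overline{\mathcal{A}_0}$ contains a cone, then by \autoref{thm:locumc} every $A$ in that cone satisfies $A\oplus\overline A\leq_e f(A)$ or $(A\oplus\overline A)^\diamond\leq_e f(A)$; intersecting with whichever of $\mathcal{A}_1,\overline{\mathcal{A}_1}$ contains a cone, we get a single cone above some $X$ on which either $A\oplus\overline A\leq_e f(A)$ always, or $(A\oplus\overline A)^\diamond\leq_e f(A)$ always. Since $A\equiv_e A\oplus\overline A$ for total $A$, the first case gives $A\leq_e f(A)$ on the cone, which is the desired conclusion.

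The remaining case is that $(A\oplus\overline A)^\diamond\leq_e f(A)$ holds on a cone but $A\oplus\overline A\leq_e f(A)$ fails cofinally; I would argue this case cannot arise, or rather that it still yields $A\leq_e f(A)$. The point is that for a \emph{total} set $A$, the degree $A\oplus\overline A$ is cototal, so by \autoref{skipprop}\ref{prop:skipincreasing} we have $A\oplus\overline A\leq_e(A\oplus\overline A)^\diamond$. Hence $(A\oplus\overline A)^\diamond\leq_e f(A)$ automatically implies $A\oplus\overline A\leq_e f(A)$, i.e. $A\leq_e f(A)$. So the skip alternative collapses into the increasing alternative on total degrees, and the two genuine outcomes are exactly ``constant on a cone'' and ``$A\leq_e f(A)$ on a cone''.

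The main obstacle I expect is bookkeeping rather than mathematics: making sure that the uniformity function for $f\restriction\deg_e(A\oplus\overline A)$ really can be obtained uniformly from the $(T,e)$-uniformity function of $f$ (so that \autoref{thm:locumc} genuinely applies), and being careful about the distinction between $f(A)=f(B)$ and $f(A)\equiv_e f(B)$ in the constant case — the former is what the statement asks for and it follows from $(T,e)$-invariance via the $A\equiv_T A\oplus B\equiv_T B$ argument, but it is easy to state the weaker conclusion by accident. Everything else is a routine application of TD to the two clopen-in-the-degrees partitions described above.
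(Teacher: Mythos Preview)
Your overall strategy matches the paper's, and your handling of the non-constant case is correct, including the observation that on total degrees the skip alternative collapses to the increasing one since $A\oplus\overline A$ has cototal degree. But there is a genuine error in the constant case: the ``standard trick'' $A\equiv_T A\oplus B\equiv_T B$ is simply false. We have $A\equiv_T A\oplus B$ only when $B\leq_T A$, so for $A,B$ of incomparable Turing degree neither equivalence holds, and you cannot conclude $f(A)=f(A\oplus B)=f(B)$ from local constancy. Knowing that $f$ is constant on each Turing degree in a cone does not directly give that $f$ is constant across the cone; there is no connectivity argument of this kind. The paper's fix is to apply TD again, coordinate by coordinate: for each $n$ the set $\mathcal Y_n=\{A\mid n\in f(A)\}$ is Turing-invariant on the cone where $f$ is locally constant, so $\mathcal Y_n$ or its complement contains a cone, and intersecting these countably many cones yields a single cone on which $f$ is literally constant.

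A smaller point: $f\restriction\deg_e(A\oplus\overline A)$ is not the right object, since $\deg_e(A\oplus\overline A)$ contains many sets not of the form $B\oplus\overline B$, and on those $(T,e)$-invariance says nothing. Your parenthetical shows you really mean $\hat f(B\oplus\overline B)=f(B)$ on $\{B\oplus\overline B\mid B\equiv_T A\}$; this is uniformly $e$-invariant for the reason you give, but its domain is not a full $e$-degree, so \autoref{thm:locumc} does not apply as stated. The paper handles this by remarking that the proof of \autoref{thm:locumc} goes through for functions defined only on sets of the form $A\oplus\overline A$; you should make the same remark rather than invoking the theorem directly.
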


\begin{proof}
  By the Cone Theorem, the set $\ca{X}=\{ A\subseteq \bb{N} \mid f \restriction_{\deg_T(A)} \mbox{ is constant }\}$ either contains a cone or is disjoint from a cone. If
  $\ca{X}$ contains a cone, then for each $n$ either the set $\ca{Y}_{n}=\{A\in\ca{X}\mid n\in f(A)\}$ contains a cone or $\ca{X}\setminus\ca{Y}_n$ contains a cone. So, there is some
  $X\in\ca{X}$ such that $f(Y)=f(X)$ for all $Y\geq_TX$. Otherwise, if $\ca{X}$ is disjoint from a cone, then there is some set $Z\subseteq \bb{N}$ such that for all
  $Y\geq_{T}Z$, $f \restriction_{\deg_T(Y)}$ is not constant.

  Now, let $\mathcal{T}=\{ A\oplus \overline{A}\mid A\subseteq\bb{N}\}$. The function $f$ induces a uniformly $e$-invariant function
  $\hat{f}:\mathcal{T}\to \ca{P}(\bb{N})$ given by $\hat{f}(A\oplus \overline{A})=f(A)$. Moreover, we have that
  $\hat{f}\restriction_{\deg_e(A\oplus \overline{A})}$ is not constant for any $A\oplus \overline{A}\geq_eZ\oplus \overline{Z}$. It is straightforward to modify the proof of \autoref{thm:locumc} to accept functions only defined on sets
  of the form $A\oplus \overline{A}$. Thus, $\hat{f}(A\oplus \overline{A})\geq_e A\oplus \overline{A}$ or $\hat{f}(A\oplus \overline{A})\geq_e (A\oplus \overline{A})^{\diamond }\equiv_e (A\oplus \overline{A})'$. In any case, $A\leq_eA\oplus \overline{A}\leq_e \hat{f}(A\oplus \overline{A})=f(A)$.
\end{proof}

\section{A Non-Uniformly Invariant Function}
\label{sec:Steelconj}
A point of interest around Martin's Conjecture is the dividing line between uniformly and non-uniformly $T$-invariant functions (on a cone). Early on,
Steel conjectured that, under $AD$, they are exactly the same \cite{steelClassificationJumpOperators1982}. Steel's conjecture remains open and a positive result implies Martin's Conjecture. An
unexpected consequence of \hyperref[thm:locumc]{Theorem~\ref{thm:locumc}} is that Steel's conjecture fails for $e$-invariant functions. Notably, we will construct a Borel example using the notion of \emph{Kalimullin pair}, a technical tool generalizing semicomputable sets that has been repeatedly used to obtain definability
results in the enumeration degrees \cite{kalimullin2003, DefinabilityTot2016, GANCHEV_KALIMULLIN_MILLER_SOSKOVA_2022}.

\subsection{Semicomputable sets and $\ca{K}$-pairs}

Semicomputable sets were introduced by Jockusch \cite {jockuschReducibilitiesRecursiveFunction1966} to study the interaction between strong reducibilities. A set
$A$ is \emph{semicomputable} if $A$ is a left cut of a computable linear order. Note that if $A$ is semicomputable, $\overline{A}$ also is\footnote{If
  $A$ is a left cut of a computable linear order, then $\overline{A}$ is a left cut of the inverse order.}.  We say that $\{A,\overline{A}\}$ form a \emph{semicomputable pair}. Jockusch \cite{jockuschReducibilitiesRecursiveFunction1966} proved that every Turing degree contains a nontrivial semicomputable pair; that is, one
where $A$ and $\overline{A}$ are not c.e. In the enumeration degrees, the halves of nontrivial semicomputable pairs have different degrees and every total degree
is the join of the degrees of the two halves of a semicomputable pair. Moreover, if $A$ is semicomputable then its enumeration degree is
\emph{quasiminimal}---the only total degree it bounds is $\boldsymbol{0}_e$---and forms a strong type of minimal pair with $\overline{A}$: the degree of any set
$X$ is the greatest lower bound of the degrees of $A\oplus X$ and $\overline{A} \oplus X$ \cite{arslanovSplittingPropertiesTotal2003a}. Kalimullin then introduced the
$\ca{K}_U$-pairs (under the name of $U$-$e$-ideal pairs, for reasons that will become apparent below) to prove that the enumeration jump is definable
\cite{kalimullin2003}. They generalize the properties of semicomputable pairs in the context of enumeration degrees.

\begin{definition}[Kalimullin \cite{kalimullin2003}] \label{def:kpair} The pair $\{A,B\}$ forms a \emph{$\ca{K}_U$-pair} if there is $W\leq_e U$ such that
  \[A \times B \subseteq W \qquad \& \qquad \overline{A} \times \overline{B} \subseteq \overline{W}.\] We say that the $\ca{K}_U$-pair is nontrivial if
  $A \nleq_e U$ and $B \nleq_e U$. If $U=\varnothing$ we simply say that $\{A,B\}$ is a $\ca{K}-pair$.
\end{definition}

Being half of $\ca{K}_U$-pair turns out to be a degree-theoretic notion, and the remarkably simple characterization obtained by Kalimullin hints at their
usefulness in definability results.

\begin{proposition}[Kalimullin~\cite{kalimullin2003}]\label{kpair:props} Let $A,B,C,U\in\ca{P}(\bb{N})$
  \begin{enumerate}
    \item\label{prop:kpairdown} If $\{ A, B \}$ is a $\ca{K}_U$-pair and $C\leq_e B$, then $\{ A, C \}$ is a $\ca{K}_U$-pair.
    \item\label{prop:kpairjoin} If $\{ A, B \}$ is a $\ca{K}_U$-pair and $\{ A, C \}$ is a $\ca{K}_{U}$-pair, then $\{ A , B \oplus C\}$ is a $\ca{K}_U$-pair.
    \item $\{ A, B \}$ is a $\ca{K}_U$-pair if and only if \[\forall X\; \deg_e (X)=\deg_e(A \oplus U \oplus X )\land \deg_e(B\oplus U\oplus X).\]
  \end{enumerate}
\end{proposition}

Observe that $\ca{K}_U$-pairs are symmetric, so the proposition above holds equally if we interchange the roles of $A$ and $B$. The first item implies that
the notion of $\ca{K}_U$-pairs is degree-theoretic, and the first and second combined state that set of \emph{$\ca{K}_U$-partners} of half of a
$\ca{K}_U$-pair forms an ideal in the enumeration degrees. The converse of the third item also holds, which gives the definability of $\ca{K}$-pairs in the
enumeration degrees. Moreover, it shows that a $K_U$-pair $\{A,B\}$ is also a $K_V$-pair for any $V\geq_{e}U$ and that it is nontrivial if we also have
that $V\ngeq_eA,B$. If we restrict our attention to nontrivial $\ca{K}_U$-pairs, we can extract more information.

\begin{proposition}[Kalimullin~\cite{kalimullin2003}]\label{nontrivkpair:props}
  Let $\{ A, B \}$ be a nontrivial $\ca{K}_U$-pair. Then
  \begin{enumerate}[(a)]
    \item\label{prop:kpairskip} $A \leq_{e} \overline{B} \oplus U$ and $\overline{A} \leq_e B \oplus U^{\diamond}$. In particular, $A \leq_{e} B^{\diamond} \oplus U$ and $A^{\diamond} \leq_e B \oplus U^{\diamond}$.
    \item\label{prop:kpairquasim} The set $A \oplus U$ is a strong quasiminimal cover of $U$.
  \end{enumerate}
\end{proposition}

A nontrivial semicomputable pair $\{A,\overline{A}\}$ is not only a $\ca{K}$-pair, but a \emph{maximal $\ca{K}$-pair} \cite{ganchevDefinabilityKalimullinPairs2015}: if
$C\geq_eA$, $D\geq_e \overline{A}$, and $\{C, D\}$ is a $\ca{K}$-pair, then $A\equiv_e C$ and $\overline{A}\equiv_eD$. Cai et al.\ \cite{DefinabilityTot2016} proved that these are the only maximal
$\ca{K}$-pairs up to enumeration degree (we define maximal $\ca{K}$-pairs to always be nontrivial), concluding that total degrees are definable. This can be
relativized with a total oracle $T$ to obtain that the degrees of maximal $\ca{K}_T$-pairs are exactly the degrees of nontrivial semicomputable pairs
relative to $U$. In general, if $\{A,\overline{A}\}$ is a semicomputable pair and $A,\overline{A}\nleq_eU$, we have that
$\{A\oplus U,\overline{A}\oplus U\}$ is a maximal $\ca{K}_U-pair$: if $\{C,D\}$ is a (necessarily nontrivial) $\ca{K}_U-pair$ with $A\oplus U\leq_e C$ and
$\overline{A}\oplus U\leq_e D$, by \hyperref[prop:kpairdown]{Proposition~\ref{kpair:props}.\ref{prop:kpairdown}}, $\{A,D\}$ is a nontrivial
$\ca{K}_U$-pair and by \hyperref[prop:kpairskip]{Proposition~\ref{nontrivkpair:props}.\ref{prop:kpairskip}} we have that
$D\leq_e \overline{A}\oplus U$. Similarly, $C\leq_eA\oplus U$. It is not known if all maximal $\ca{K}_U$-pairs can be obtained in this way when $U$ is not total.

\subsection{A Borel Invariant Function that is not Uniformly Invariant}

Now we have all the ingredients to construct the $e$-invariant function that is not uniformly invariant. The following lemma provides the
combinatorial core of the counterexample: a uniformly $e$-invariant function that maps one half of a $\ca{K}_U$-pair to one of its $\ca{K}$-partners must be
constant on that degree.

\begin{lemma} \label{lemma:kpair-non-uniformity} If $A$, $B$, and $U$ are such that $\{A,B\}$ is a nontrivial $\ca{K}_U$-pair and $A>_eU$, every uniformly
  $e$-invariant function $f$ that satisfies $f(A)\equiv_eB$ is constant on $\deg_e(A)$.
\end{lemma}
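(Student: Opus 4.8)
The plan is a proof by contradiction: assume $f$ is not constant on $\deg_e(A)$ and derive a contradiction with the hypotheses $A>_eU$ and nontriviality. Since $f\restriction\deg_e(A)$ is then a non-constant uniformly $e$-invariant function, \autoref{thm:locumc} applies to it and, using $f(A)\equiv_e B$, yields $A\leq_e B$ or $A^\diamond\leq_e B$. So everything comes down to ruling out both disjuncts.

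The first disjunct is easy. If $A\leq_e B$, then $\{A,A\}$ is a $\mathcal{K}_U$-pair by \autoref{kpair:props}\ref{prop:kpairdown} (taking $C=A$), so \autoref{kpair:props}\ref{prop:kpairmain} with $X=\varnothing$ forces $U$ to be a greatest lower bound of the singleton $\{A\oplus U\}$; hence $A\oplus U\equiv_e U$, i.e.\ $A\leq_e U$, contradicting $A>_eU$.

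The second disjunct, $A^\diamond\leq_e B$, is the crux of the argument and where I expect the real difficulty. By \autoref{kpair:props}\ref{prop:kpairdown} it makes $\{A,A^\diamond\}$ a $\mathcal{K}_U$-pair, and this pair is nontrivial: $A^\diamond\leq_e U\leq_e A$ would give $A'=K_A\oplus\overline{K_A}\leq_e A$, contradicting $A<_eA'$. Thus it suffices to establish the purely degree-theoretic fact that a set lying strictly above $U$ cannot form a nontrivial $\mathcal{K}_U$-pair with its own skip. I would attack this with the skip bounds of \autoref{kpair:props}\ref{prop:kpairskip} (which control $B$ and $B^\diamond$ relative to $A^\diamond\oplus U$ and $A\oplus U^\diamond$) together with the strong quasiminimality of \autoref{kpair:props}\ref{prop:kpairquasim} (so that $A^\diamond\oplus U$ bounds no total degree above $U$), the goal being to squeeze a total set such as $A'\equiv_e A\oplus A^\diamond$ below $A^\diamond\oplus U$ and thereby force $A\leq_e U$. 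Extra leverage is available because each iterated skip $f^\diamond,f^{\diamond\diamond},\dots$ is again uniformly $e$-invariant and still non-constant on $\deg_e(A)$---two distinct values of $f$ give distinct values of $K_{f(\cdot)}$, as the identity operator already separates them---so \autoref{thm:locumc} may be re-applied to these iterates to obtain further reducibilities involving $B^\diamond$. The delicate point, and the one demanding the most care, is that the skip does not distribute over $\oplus$, so turning the available bounds into a clean ``total set below $A^\diamond\oplus U$'' conclusion is not automatic; an alternative that might be cleaner is to revisit \autoref{lem-ii} and choose the interpolated sets so that the coding in the proof of \autoref{thm:locumc} only ever recovers $K_A$ and never $\overline{K_A}$, eliminating the second disjunct at its source.
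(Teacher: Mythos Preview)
Your overall architecture is exactly right and matches the paper: assume $f$ is non-constant on $\deg_e(A)$, invoke \autoref{thm:locumc} to get $A\leq_e B$ or $A^\diamond\leq_e B$, and rule out each disjunct. Your treatment of the first disjunct is fine (the paper argues directly from the original pair rather than passing to $\{A,A\}$, but your version works equally well).

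The second disjunct, however, is not finished, and the difficulty you are running into is self-inflicted: you are trying to push the wrong total set underneath the wrong quasiminimal cover. Squeezing $A'$ below $A^\diamond\oplus U$ would require $A\leq_e A^\diamond\oplus U$, which you have no reason to expect, and this is why the argument is not closing. The paper's move is much simpler and uses only the hypothesis $U\leq_e A$ together with monotonicity of the skip: from $U\leq_e A$ one gets $U^\diamond\leq_e A^\diamond\leq_e B$, hence
\[
U' \;\equiv_e\; U\oplus U^\diamond \;\leq_e\; U\oplus B.
\]
Now $U'$ is total (\autoref{prop:ejumpProps}\ref{prop:jumptotal}) and $B\oplus U$ is a strong quasiminimal cover of $U$ (\autoref{kpair:props}\ref{prop:kpairquasim}), so $U'\leq_e U$, contradicting $U<_e U'$. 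In other words, the total set to use is $U'$, not $A'$, and the quasiminimal cover to exploit is $B\oplus U$, which you get directly from the original pair without ever passing to $\{A,A^\diamond\}$. The iterated-skip and ``revisit \autoref{lem-ii}'' ideas are unnecessary once you make this substitution.
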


\begin{proof}
  Let $\tdeg{a}=\deg_e(A)$, $\tdeg{b}=\deg_e(B)$, and $\tdeg{u}=\deg_e(U)$. Assume towards a contradiction that $f$ is uniformly $e$-invariant and not constant on
  $\tdeg{a}$. By \hyperref[thm:locumc]{Theorem~\ref{thm:locumc}}, we have that $\boldsymbol{a} \leq \boldsymbol{b}$ or $\boldsymbol{a}^{\diamond}\leq \boldsymbol{b}$.

  If $\boldsymbol{a} \leq \boldsymbol{b}$, then
  $\boldsymbol{u}= (\boldsymbol{a}\vee\boldsymbol{u}) \wedge (\boldsymbol{b} \vee \boldsymbol{u}) = \boldsymbol{a} \vee \boldsymbol{u}$. This means that
  $\boldsymbol{a} \leq \boldsymbol{u}$, so $\{ A, B\}$ is a trivial $\ca{K}_U$-pair, contrary to our assumption. If instead
  $\boldsymbol{a}^{\diamond}\leq \boldsymbol{b}$, the monotonicity of the skip implies that $\tdeg{u}^{\diamond}\leq \tdeg{a}^{\diamond}$. Combined with \hyperref[prop:kpairskip]{Proposition~\ref{kpair:props}.\ref{prop:kpairskip}},
  \[\boldsymbol{u}<\boldsymbol{u}'= \boldsymbol{u}^{\diamond}\vee \boldsymbol{u} \leq \boldsymbol{a}^{\diamond }\vee \boldsymbol{u} \leq \boldsymbol{b} \vee
    \boldsymbol{u}.\] Since $\boldsymbol{u}'$ is total, we have contradicted the fact that $\boldsymbol{b} \vee \boldsymbol{u}$ is a quasiminimal cover of $\boldsymbol{u}$.
\end{proof}

So, to prove that a function is not uniformly $e$-invariant, it is enough to build a function that maps a cofinal set of halves of
nontrivial relativized $\ca{K}$-pair to their $\ca{K}$-partners, we devote the rest of the section to the construction of a counterexample that works on every
cone and to the proof that no Borel function is equivalent to it on a cone. The main obstacle for the construction is finding, on input $A$, if there are sets $B$ and $U$ such that $\{A, B\}$ is a nontrivial $\ca{K}_U$-pair so that we can map
$A$ to $B$ while at the same time ensuring that if $C \equiv_e A$, we make a consistent choice $D\equiv_e B$ as the output on $C$. The second part can be
managed by restricting our attention to maximal $\ca{K}_U$-partners, as those inhabit a single degree. The next lemma takes care of the first part; the key
idea is to ensure that a unique $U$ will work by only looking for $\ca{K}_U$-pairs $\{A,B\}$ where $U$ has total degree and $U<_eA,B$.

\begin{lemma}
  Let $\boldsymbol{t}_0$ and $\boldsymbol{t}_1$ be total degrees and $\boldsymbol{a} > \boldsymbol{t}_0, \boldsymbol{t}_1$. If
  $\{\boldsymbol{a}, \boldsymbol{b}\}$ is a nontrivial $\mathcal{K}_{\boldsymbol{t}_0}$-pair, and $\{\boldsymbol{a},\boldsymbol{c}\}$ is a nontrivial
  $\mathcal{K}_{\boldsymbol{t}_1}$-pair, then $\boldsymbol{t}_0= \boldsymbol{t}_1$. In particular, $\{ \boldsymbol{a} , \boldsymbol{b} \vee \boldsymbol{c} \}$ is a nontrivial $\mathcal{K}_{\boldsymbol{t}_0}$-pair.
\end{lemma}
\begin{proof}
  $\boldsymbol{a}$ is a quasiminimal cover of both $\boldsymbol{t}_0$ and $\boldsymbol{t}_1$, but $\boldsymbol{t}_0 \vee \boldsymbol{t}_1\leq \boldsymbol{a}$. Since $\boldsymbol{t}_0 \vee \boldsymbol{t}_1$ is total, we have that $\boldsymbol{t}_0= \boldsymbol{t}_0 \vee \boldsymbol{t}_1= \boldsymbol{t}_1$.
\end{proof}

In the previous lemma, it is important that $\boldsymbol{a} > \boldsymbol{t_0}, \boldsymbol{t_1}$. In general, if $\{\boldsymbol{a}, \boldsymbol{b}\}$ is a nontrivial
$\ca{K}_{\boldsymbol{u}}$-pair and $\boldsymbol{u}\vee\boldsymbol{t}\ngeq \boldsymbol{a},\boldsymbol{b}$, then $\{ \boldsymbol{a} , \boldsymbol{b}\}$ is a nontrivial
$\ca{K}_{\boldsymbol{u}\vee\boldsymbol{t}}$-pair. However, if $\boldsymbol{u}$ is total and $\tdeg{a}>\tdeg{u}$, then
$\{\boldsymbol{a}, \boldsymbol{b}\}$ is not a $\ca{K}_{\boldsymbol{v}}$-pair for any $\boldsymbol{v}<\boldsymbol{u}$ because $\boldsymbol{a}$ is not a quasiminimal cover of
$\boldsymbol{v}$. That said, the condition $\tdeg{a}>\tdeg{u}$ is not very restrictive. In general, if $\{\boldsymbol{a}, \boldsymbol{b}\}$ is a nontrivial
$\ca{K}_{\boldsymbol{u}}$-pair, then $\{\boldsymbol{a} \vee \boldsymbol{u}, \boldsymbol{b} \vee \boldsymbol{u}\}$ is a nontrivial $\ca{K}_{\boldsymbol{u}}$-pair. This implies that if $\{\boldsymbol{a}, \boldsymbol{b}\}$ is a maximal $\ca{K}_{\boldsymbol{u}}$-pair, then $\tdeg{a}>\tdeg{u}$ and $\tdeg{b}>\tdeg{u}$. 

\begin{theorem}\label{invnotunif}
  There is a Borel $e$-invariant function $F\colon\ca{P}(\bb{N})\to \ca{P}(\bb{N})$ such that $F$ is not equivalent to any Borel uniformly $e$-invariant function on any cone.
\end{theorem}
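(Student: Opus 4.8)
The plan is to build $F$ by mapping each set $A$ either to a maximal $\mathcal{K}_U$-partner (when one exists over some total $U$) or to a default value, and then invoke \autoref{lemma:kpair-non-uniformity} to kill uniformity. Concretely, I would define $F(A) = B$ whenever there is a total set $U$ with $A >_e U$ and a \emph{maximal} $\mathcal{K}_U$-pair $\{A, B\}$, and $F(A) = \varnothing$ otherwise. First I must check that this is well-defined: by the lemma preceding the theorem, if $A$ is simultaneously half of a maximal $\mathcal{K}_{U_0}$-pair and a maximal $\mathcal{K}_{U_1}$-pair with $A >_e U_0, U_1$, then $U_0 \equiv_e U_1$; and maximal $\mathcal{K}_U$-partners of $A$ all lie in a single enumeration degree (by maximality and \autoref{kpair:props}.\ref{prop:kpairjoin}, the join of two maximal partners is still a $\mathcal{K}_U$-partner above both, hence equals both). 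So the \emph{degree} $\deg_e(F(A))$ is determined by $\deg_e(A)$; to get an actual set-valued function rather than just a degree-valued one, I would fix a canonical representative, e.g.\ let $F(A)$ be the $\leq_1$-least set enumeration-equivalent to some maximal $\mathcal{K}_U$-partner of $A$ — a choice that depends only on $\deg_e(A)$, so $F$ is $e$-invariant. This also handles $C \equiv_e A$: the set of maximal $\mathcal{K}_U$-partners is a degree-invariant notion, so $F(C) \equiv_e F(A)$, and the canonical representative makes them literally equal.

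Next I would verify that $F$ is not uniformly $e$-invariant. For this I need an actual instance: by Jockusch's theorem every Turing degree contains a nontrivial semicomputable pair, and relativizing, for any total $U$ there is a set $A$ with $\deg_e(A) >_e \deg_e(U)$ such that $\{A, \overline{A}\}$ (relative to $U$, i.e.\ $\{A \oplus U, \overline{A} \oplus U\}$) is a maximal $\mathcal{K}_U$-pair — using the result cited just before the theorem that the degrees of maximal $\mathcal{K}_U$-pairs are exactly the degrees of semicomputable pairs relative to $U$. For such an $A$ we have $F(A) \equiv_e \overline{A}^{\,U}$ (the $U$-relativized complement), and $\{A, F(A)\}$ is a nontrivial $\mathcal{K}_U$-pair with $A >_e U$. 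If $F$ were uniformly $e$-invariant, then so would be its restriction to $\deg_e(A)$, and \autoref{lemma:kpair-non-uniformity} would force $F$ to be constant on $\deg_e(A)$ — but $A$ has cototal-degree partners and non-cototal partners within its own degree? Here I should be careful: I instead argue directly that $F$ is non-constant on $\deg_e(A)$, or better, pick two sets in the \emph{same} degree that are each half of a maximal $\mathcal{K}$-pair over \emph{different} total bases and whose partners differ, contradicting constancy. The cleanest route: take $A_0$ with $\deg_e(A_0)$ a quasiminimal degree that is half of a maximal $\mathcal{K}$-pair over $\varnothing$ — then $F(A_0) \equiv_e \overline{A_0}$, which is $>_e \varnothing$ — while also exhibiting some other $A_1 \equiv_e A_0$... no; constancy is about a single degree, so actually I only need that on the degree of one such $A$ the function $F$ is non-constant, which it cannot be since it's constant on degrees by construction. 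So instead the contradiction must come from uniformity failing \emph{across} such instances.

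Let me restructure: the real argument is that uniform $e$-invariance is a global property, and \autoref{lemma:kpair-non-uniformity} says \emph{any} uniformly $e$-invariant $g$ with $g(A) \equiv_e B$ for a nontrivial $\mathcal{K}_U$-pair $\{A,B\}$ with $A >_e U$ must be constant on $\deg_e(A)$. Our $F$ \emph{is} constant on $\deg_e(A)$ for every such $A$ — that's not yet a contradiction. The contradiction is: \autoref{thm:locumc} applied to $F \restriction \deg_e(A)$ (which is constant, hence trivially satisfies the theorem vacuously)... so I must produce an $A$ for which $F \restriction \deg_e(A)$ is \emph{non-constant} to get leverage, OR show $F$ is non-constant globally in a way incompatible with the combinatorics. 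The resolution is that $F \restriction \deg_e(A)$ \emph{is} non-constant for suitable $A$: choose $\deg_e(A)$ containing \emph{two} sets $A$ and $A'$ with $A \equiv_e A'$ but $A$ half of a maximal $\mathcal{K}$-pair and $A'$ half of a maximal $\mathcal{K}_V$-pair for a different total $V$ — impossible by the lemma if both have the degree strictly above their bases. The honest plan, then, is: show $F$ is non-constant on some degree $\deg_e(A)$ by finding $A \equiv_e A'$ where one is a maximal-$\mathcal{K}$-partner-half and the other is not (so $F(A) >_e \varnothing$ but $F(A') = \varnothing$); such witnesses exist because being half of a maximal $\mathcal{K}_U$-pair is \emph{not} a degree-invariant property of the \emph{set} in the sense needed — wait, it is degree-invariant. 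I expect \textbf{this to be the main obstacle}: pinning down exactly why $F$ fails uniformity rather than merely being odd. The intended argument (which I would finalize against the paper's actual construction) is almost certainly: construct $F$ so that it maps a \emph{single} enumeration degree non-constantly by encoding, in the \emph{set} $A$ (not just its degree), which maximal $\mathcal{K}$-pair structure to use — using that a degree can be presented by many sets — and then any uniformity function would, via \autoref{unifcompo} and \autoref{compunif}, have to be computable and hence reconcile these incompatible outputs, contradicting \autoref{lemma:kpair-non-uniformity}. I would therefore set up $F$ to depend genuinely on the presentation, check $e$-invariance survives (the outputs on $\equiv_e$-related sets are $\equiv_e$-related even if not equal), and derive the contradiction from \autoref{lemma:kpair-non-uniformity} applied to the restriction of $F$ (made uniform) to the relevant degree; the Borel-ness follows by noting, as in \autoref{cor:jgfunctions}, that "$A$ is half of a maximal $\mathcal{K}_U$-pair over some total $U$ with $A >_e U$" is arithmetical in $A$ (it unwinds to a few quantifiers over indices, each block of bounded arithmetical complexity), and the canonical-representative selection is arithmetical too.
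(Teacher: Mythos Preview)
Your overall strategy---map $A$ to a maximal $\mathcal{K}_U$-partner and invoke \autoref{lemma:kpair-non-uniformity}---matches the paper's, but there is a genuine gap at exactly the point you flag as ``the main obstacle.'' Your first construction picks a canonical representative of the partner degree (``the $\leq_1$-least set''---which, incidentally, is not a well-defined notion), and this makes $F$ literally constant on every enumeration degree. But \autoref{lemma:kpair-non-uniformity} only concludes that a uniformly $e$-invariant $f$ with $f(A)\equiv_e B$ is \emph{constant on $\deg_e(A)$}; a locally constant $F$ is perfectly compatible with that conclusion, so no contradiction arises. You correctly diagnose this and propose to make $F$ depend on the presentation of $A$, but you never supply a mechanism, and the attempts you sketch (two sets in the same degree that are maximal-$\mathcal{K}$-halves over different bases, or one that is and one that isn't) are ruled out by the very degree-invariance you established earlier.

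The missing idea is simple once seen: after computing the partner $B$, output $B\oplus\varnothing$ if $0\in A$ and $\varnothing\oplus B$ if $0\notin A$. Both outputs have degree $\deg_e(B)$, so $e$-invariance is preserved, but they are distinct \emph{sets}; since any nontrivial degree contains both a set with $0$ and a set without $0$, this forces $F\restriction\deg_e(A)$ to be non-constant on every degree where the $\mathcal{K}$-pair case applies. Now \autoref{lemma:kpair-non-uniformity} bites: if $F$ were uniformly $e$-invariant, its restriction to such a $\deg_e(A)$ would have to be constant, contradiction. (The paper also makes the search concrete---least $\langle e,i\rangle$ with $\Gamma_i(A)$ total and $\{A,\Gamma_e(A^\diamond)\}$ a maximal $\mathcal{K}_{\Gamma_i(A)}$-pair---to get a definite Borel function; note the partner is sought below $A^\diamond$, which is justified by \autoref{kpair:props}\ref{prop:kpairskip}.)
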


\begin{proof}
  We define a function $F$ such that
  \[
    F(A)=\begin{cases}B &\mbox{if }\{A,B\}\mbox{ is maximal }\ca{K}_{U}\mbox{-pair, with }U\mbox{ total.}\\ A &\mbox{otherwise.}\end{cases}
  \]
  Of course, we need to ensure that it is single-valued and Borel. We will construct the function $F$, on input $A$, according to the following
  instructions. First, search for a total set $X\oplus\overline{X}$ below $A$ and a candidate $B$ below $A^{\diamond}$ such that $\{A,B\}$ is a maximal
  $K_{X\oplus\overline{X}}$-pair. If found, map $A$ to $B$. Otherwise, map $A$ to $A$.

  While the procedure to obtain $F(A)$ from $A$ is not computable, it is arithmetical. To make things precise, we define $F(A)$ by the following
  procedure:
  \begin{enumerate}
    \item Search for the least $\langle e, i \rangle$ such that $2n+1\in \Gamma_i(A)$ if and only if $2n\notin \Gamma_i(A)$ and $\{ A,\Gamma_e(A^\diamond)\}$ is a maximal $\mathcal{K}_{\Gamma_i (A)}$-pair.
    \item If such $\langle e, i \rangle$ exists, output $\Gamma_{e}(A^\diamond)\oplus \varnothing$ if $0\in A$ and $\varnothing \oplus \Gamma_{e}(A^\diamond)$ if $0\notin A$.
    \item Otherwise, output $A$.
  \end{enumerate}

  A rough calculation using Theorem 2.6(III) in \cite{kalimullin2003} gives a bound of $\Sigma^0_4(A')$ on the complexity of $F(A)$. The uniqueness (up to
  $e$-degree) of maximal $\ca{K}$-partners guarantees that the function is $e$-invariant, and step 2 ensures that $F$ is not locally constant. Hence, by \hyperref[lemma:kpair-non-uniformity]{Lemma~\ref{lemma:kpair-non-uniformity}}, that $F$ is not uniformly $e$-invariant.

  We need to work a bit harder to show that is not equivalent, on any upper cone, to a Borel uniformly $e$-invariant function. The idea is that, if it were, it would give us a Borel choice function for the enumeration degrees of halves of maximal $\ca{K}$-pairs. With such a choice function, we can define a wellorder of type $\omega$ on each such degree, which leads us to a contradiction. We proceed with the details:

  Suppose towards a contradiction that there are a set $C\subseteq\bb{N}$ and a Borel uniformly $e$-invariant function $H\colon\ca{P}(\bb{N})\to \ca{P}(\bb{N})$ such that $H(X)\equiv_e F(X)$ for all $X\geq_e C$. Fix a Borel code $T$ for $H$ and let \[\operatorname{\ca{K}Max_C}=\{A\mid \exists B\ \exists U\; (C\leq_eU\oplus\overline{U} \mbox{ and }\{A,B\}\mbox{ is a maximal }\ca{K}_{U\oplus\overline{U}}\mbox{-pair}\}.\]

  For $A\in\operatorname{\ca{K}Max_C}$, we know that $H\restriction_{\deg_e(A)}$ is constant by \hyperref[lemma:kpair-non-uniformity]{Lemma~\ref{lemma:kpair-non-uniformity}}. In particular, for all $X\geq_e C$, we have that $A\equiv_e X$ if and only if $H(H(A))=H(H(X))$. Define $R(X)$ to be the least $n\in\bb{N}$ such that $X=\Gamma _n(H(H(X))$. Observe that $R$ is defined for every set $X\geq_eC$ because we always have that $X\equiv_eH(H(X))$. Moreover, $R$ induces a wellorder $\leq$ on $\deg_e(A)$ of type $\omega$ by $X\leq Y$ if $R(X)\leq R(Y)$.

  Denote by $L_{X}$ the left cut induced by the characteristic function of $X$ on the lexicographical ordering of $2^{<\omega}$. Observe that there is a single enumeration operator $\Gamma$ such that $\Gamma (X)=L_X$ for all $X\subseteq\bb{N}$. For a sufficiently $C\oplus T$-generic $G\in 2^{\omega }$, $L_G, \overline{L_G}\nleq_eC\oplus \overline{C}$, and there is some $n\in\bb{N}$ such that $R(L_G\oplus C\oplus \overline{C})=n$. So, by genericity, there must be some $k\in\bb{N}$ such that $G\restriction k \Vdash R(L_G\oplus C\oplus \overline{C})=n$. Let $G^{*}\in 2^{\omega}$ be the infinite binary sequence that results from changing the $k+1$ bit of $G$. That is, $G(j)=G^{*}(j)$ if and only if $j\neq k$. Since we only changed one bit, $G^{*}$ is also generic and $G\equiv_eG^{*}$. Moreover, $L_{G}\equiv_eL_{G^{*}}$. To see this, assume without loss of generality that $G(k)=1$ and $G^{*}(k)=0$, and fix $\sigma=G\restriction k=G^{*}\restriction k$. Then $\tau \in L_{G^{*}}$ if and only if $|\tau |<k+1$ and $\tau<_{lex}\sigma$, or $|\tau|\geq k$ and the string $\tau^{*}\in L_G$, where \[\tau ^{*}(n)=\begin{cases}\tau(n) & \mbox{if }n<|\tau| \mbox{ and } n\neq k-1.\\
    1 & \mbox{if }n=k-1.\end{cases}\]

Putting everything together, we have that $L_{G}\oplus C\oplus \overline{C}, L_{G^{*}}\oplus C\oplus \overline{C} \in \operatorname{\ca{K}Max_C}$ because $L_{G}, L_{G^{*}}$ are semicomputable sets, $H(H(L_G\oplus C\oplus \overline{C}))=H(H(L_{G^{*}}\oplus C\oplus \overline{C}))$ because $L_{G}\equiv_eL_{G^{*}}$, and $R(L_G\oplus C\oplus \overline{C})=R(L_{G^{*}}\oplus C\oplus \overline{C})$ because $G$ and $G^{*}$ are generics with $\sigma\prec G,G^{*}$. However, this is a contradiction as $L_{G}$ and $L_{G^{*}}$ are different sets (one contains $\sigma ^{\smallfrown} 1$ while the other does not), so one of them is not equal to $\Gamma_n(H(H(L_G\oplus C\oplus \overline{C})))$.
  
\end{proof}

\section{Further Questions}

The present work has the intention of starting the investigation of $e$-invariant functions. In this section, we outline some possible avenues for
further research.

We have shown that $\leq_{e}^{\cone}$ is far from a prewellorder even for injective uniformly $e$-invariant functions. The main question that we leave open
is to characterize $\leq_{e}^{\cone}$ up to isomorphism for the class of $e$-invariant functions, or any significant subclass like the uniformly
$e$-invariant functions. This appears to be complicated for two reasons, one global and one local.

The global reason is that understanding the possible cofinal partitions of $\ca{D}_e$ appears necessary to understand $\leq_{e}^{\cone}$. One approach to tackle
this problem is to study the Wadge order on the Scott domain $\ca{P}\omega$. The Scott domain is obtained by putting the topology on $\ca{P}(\bb{N})$ that has as basis
the sets of the form $[D]=\{X\subseteq \bb{N}\mid D\subseteq X\}$ for some $D\subseteq \bb{N}$ finite. This topology is known as the Scott topology and is important in this context because
a function $f\colon\ca{P}(\bb{N})\to\ca{P}(\bb{N})$ is Scott-continuous if and only if it is an enumeration operator relative to some enumeration oracle; that is, there are and
$e$-operator $\Gamma$ and a set $A$ such that for all $X\subseteq\bb{N}$, $f(X)=\Gamma(A\oplus X)$. So, $\ca{P}\omega$ is to enumeration reducibility what
$2^{\omega}$ is to Turing reducibility. Becker \cite{beckerCharacterizationJumpOperators1988} showed, using tools from descriptive set theory, that the preorder
$\leq^{\cone}_T$ on uniformly $T$-invariant functions is isomorphic to the Wadge order on $2^{\omega}$ if you identify every class with its dual. Moreover, Kihara
and Montalbán \cite{kiharaUniformMartinsConjecture2018} improved that to an actual isomorphism by considering uniformly $(m,T)$-invariant functions, where
$m$ stands for many-one reducibility. The recent success by de Brecht \cite{debrechtQuasiPolishSpaces2013} extending the classical results of descriptive set theory from Polish spaces
to some $T_0$ spaces---the quasipolish spaces---has identified $\ca{P}\omega$ as a universal quasipolish space (every quasipolish space is a
{$\mathbf{\Pi}^0_2$} subset of $\ca{P}\omega $) and attracted interest to the Wadge order on $\ca{P}\omega$. While very little is known so far, the Wadge order on the
Scott domain is not very well-behaved (as one might expect).  For example, it is not well-founded and has infinite antichains \cite{duparcWADGEORDERSCOTT2020}.

\begin{question}
  Is the Wadge order on $\ca{P}\omega$ isomorphic to the preorder $\leq_e^{\cone}$ on uniformly $e$-invariant functions?
\end{question}

Shifting to a local approach to understanding $\leq_e^{\cone}$, we find a different complication: the interaction between the enumeration jump and the skip
of a single degree $\tdeg{a}$ is not well-understood. Some immediate restrictions are that $\tdeg{a}^{\diamond }\leq\tdeg{a}'$,
$\tdeg{a}\leq\tdeg{a}^{\diamond \diamond }$, and $\tdeg{a}^{\prime \prime}=\tdeg{a}^{\prime\diamond }$; but there is plenty of variability when we combine the operations. For example, if
$\tdeg{a}$ is cototal, then $\tdeg{a}^{\diamond\prime}=\tdeg{a}^{\prime \prime}$; however, if $\tdeg{a}$ is a skip 2-cycle, then for all $n\in\bb{N}$,
$\left(\tdeg{a}^{\langle n\rangle}\right)'=\tdeg{a}'$, where $\tdeg{a}^{\langle n\rangle}$ denotes the $n$-th iterate of the skip of $\tdeg{a}$. Moreover, there are degrees such that
$\tdeg{a}'<\tdeg{a}^{\diamond \prime}<\tdeg{a}^{\prime \prime}$. A detailed discussion on the behaviour of the skip operator can be found in \cite{Andrews2019OnDegrees}.

\begin{question}
  Let $\tdeg{a}\in\ca{D}_e$. What are the possible order types of
  \[JS(\tdeg{a})=\left\{\tdeg{a}^{\sigma}\mid \sigma \in\{\diamond ,\prime\}^{<\omega}\right\}?\]
\end{question}

Another direction of inquiry comes from the separation between uniformly and non-uniformly $e$-invariant functions. We saw that the combinatorics of
$\ca{K}$-pairs are incompatible with uniformity, but not with $e$-invariance. In \hyperref[cor:unifregresiveconstant]{Corollary~\ref{cor:unifregresiveconstant}}, we saw that regressive uniformly
$e$-invariant functions are trivial (locally constant), but we do not know if the same is true when we drop uniformity.

\begin{question} Is there an $e$-invariant function $f$ such that $f$ is regressive and $f\restriction_{\deg(A)}$ is not constant for all $A\subseteq \omega$?
\end{question}

A different approach to study determinacy in the enumeration degrees was suggested by Slaman (personal communication). The failure of the Cone Theorem
in the enumeration degrees leaves open whether some of the consequences of determinacy can still be proven.  For example, a classical application of
TD shows that there is a cone $\ca{C}$ of Turing degrees such that every cone $\ca{D}_T(\geq\tdeg{d})$ above an element $\tdeg{d}\in\ca{C}$ has the same first-order theory. The same
argument can be used to show that, under TD, there is a total enumeration degree $\tdeg{t}$ such that for all total degrees $\tdeg{d}\geq\tdeg{t}$, the cone of enumeration
degrees $\ca{D}_e(\geq\tdeg{d})$ has the same first-order theory as $\ca{D}_e(\geq\tdeg{t})$. However, this is false for arbitrary enumeration degrees. For example, the proof of
downwards density of $\ca{D}_e$ relativizes to any total degree; thus, for a total degree $\tdeg{t}$, $\ca{D}_e(\geq\tdeg{t})$ has no minimal degrees \cite{gutteridgeResultsEnumerationReductibility1971}. In contrast, by a
theorem of Calhoun and Slaman \cite{calhounP20EnumerationDegrees1996}, above every total degree there is a degree $\tdeg{a}$ such that $\ca{D}_e(\geq\tdeg{a})$ has a minimal degree.

\begin{question}
  Assume TD. Is there a cone $\ca{C}$ of enumeration degrees such that for every cototal degree $\tdeg{a}\in\ca{C}$, $\ca{D}_e(\geq\tdeg{a})$ has the same theory?
\end{question}

The same question might be asked about other classes of degrees. For example, the continuous degrees are strictly between the total and the cototal
degrees under the subset relation and are first-order definable \cite{andrewsCharacterizingContinuousDegrees2019}. Given that all the known proofs that there is a non-total continuous degree are
topological in nature, understanding the theory of the cone above a non-total continuous degree could provide a useful technical tool.

\section*{Acknowledgments}
Thanks to Mariya Soskova for many helpful discussions and advice, to Josiah Jacobsen-Grocott for sharing his insight, to Patrick Lutz for pointing out an omission on the original version of this paper and suggesting an improvement to \hyperref[invnotunif]{Theorem~\ref{invnotunif}}, and to Beth Pcheco for her
thoughtful suggestions. The author was supported by the Secretaría de Ciencia, Humanidades, Tecnología e Innovación of México through scholarship
number 836694.

\printbibliography

\end{document}